\theoremstyle{plain}
\newtheorem{coro}{\bf Corollary}
\newtheorem{defn}{\bf Definition}
\newtheorem{lemma}{\bf Lemma}
\newtheorem*{lemma*}{\bf Lemma}
\newtheorem{remark}{Remark}
\newtheorem{thm}{\bf Theorem}
\newtheorem*{thm*}{\bf Theorem}
\numberwithin{equation}{section}
\newcommand{\RN}[1]{%
  \textup{\uppercase\expandafter{\romannumeral#1}}%
}
\begin{document}

\title[The Pohozaev-Schoen identity on asymptotically euclidean manifolds]{The Pohozaev-Schoen identity on asymptotically euclidean manifolds:
Conservation identities and their applications}

 \author[R. Avalos and A. Freitas]{R. Avalos$^1$ and A. Freitas$^2$}

\address{$^1$ Departamento de Matem\'atica, Universidade Federal do Cear\'a,\\ R. Humberto Monte, 60455-760, Fortaleza/CE, Brazil.}
\email{rdravalos@gmail.com}

\address{$^2$Departamento de Matem\'{a}tica,
Universidade Fe\-deral da Para\'{i}ba, 58059-900, Jo\~{a}o Pessoa, Para\'{i}ba, Brazil.}
\email{allan@mat.ufpb.br}

\subjclass[2000]{Primary 53C21; Secondary 53C24.}

\keywords{Pohozaev-Schoen Identity; Asymptotically Euclidean manifolds; Generalized solitons; Almost-Schur lemma; Static metrics.}

\begin{abstract}
The aim of this paper is to present a version of the generalized Pohozaev-Schoen identity in the context of asymptotically euclidean manifolds. Since these kind of geometric identities have proven to be a very powerful tool when analysing different geometric problems for compact manifolds, we will present a variety of applications within this new context. Among these applications, we will show some rigidity results for asymptotically euclidean Ricci-solitons and Codazzi-solitons. Also, we will present an almost-Schur-type inequality valid in this non-compact setting which does not need restrictions on the Ricci curvature. Finally, we will show how some rigidity results related with static potentials also follow from these type of conservation principles. 
\end{abstract}

\maketitle
\section{Introduction}\label{s-in}

The classical Pohozaev-Schoen identity \cite{Schoen} lies within a larger set of geometric identities which rely on conservation principles arising from symmetries of specific variational problems, as has been noted and explained in \cite{Gover} (see also \cite{allan} for another variant, in manifolds with boundary, of this useful identity). In this work, the authors encapsulate several geometric identities which have had important impact in geometric analysis within a single identity derived from such symmetry principles. It is important to note that these type of geometric identities have been established for compact manifolds. Taking into account how useful these conservation principles have proved to be when analysing geometric problems, we intend to present a version of the Pohozaev-Schoen identity established in \cite{allan} for asymptotically euclidean (AE) manifolds. These manifolds are complete non-compact manifolds with a particular simple structure at infinity. In general relativity, for example, we often need to consider initial data problems on non-compact manifolds, with natural restrictions on the asymptotic geometry (see, for instance, \cite{CB2},\cite{maxwell} and references therein). The positive mass theorem and the Penrose inequality are example of such situations (see \cite{Bartnik},\cite{Huisken},\cite{Schoen-Yau1},\cite{Schoen-Yau},\cite{Witten}). In fact, geometric problems on this scenario have received plenty of attention since these structures play a central role in general relativity, serving as a model for initial data of isolated gravitational systems. Thus, plenty of analytic tools have been developed in this scenario. In this direction, our aim is to prove the validity of a version of the Pohozaev-Schoen identity in this context, which is sufficiently powerful to be useful when analysing a wide variety of geometric problems on AE manifolds. Explicitly, we will prove the following theorem.\footnote{See Section 2 for the precise definition of the functional spaces involved.}

\begin{thm*}
Let $(M,g)$ be a $H_{s,\delta}$-asymptotically euclidean manifold, with $s>\frac{n}{2}+1$ and $\delta>-\frac{n}{2}$. Also, let $B$ be a symmetric $(0,2)$-tensor field, satisfying $\mathrm{div}_gB=0$, and let $X$ be a vector field on $M$. Suppose $X\in C^1$ is bounded with $\nabla X\in L^2$. Furthermore, suppose that $X\in L^2_{-(\rho+1)}$ and $B\in H_{s+1,\rho}$, with $\rho\geq 0$. Then, the following equality holds:
\begin{align}
\int_M\langle \overset{\circ}{B},\pounds_X g \rangle\mu_g   = \frac{2}{n} \int_M X(\mathrm{tr}_gB)\mu_g + 2 \int_{\partial M}\overset{\circ}{B}(X,\nu)\mu_{\partial M}
\end{align}
\end{thm*}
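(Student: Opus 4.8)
The plan is to reduce the identity to a single application of the divergence theorem on an exhaustion of $M$ by coordinate balls, the only genuine difficulty being the control of the flux through the spheres at infinity. First I would record the pointwise identity. Since $s+1>\tfrac{n}{2}+2$, the weighted Sobolev embeddings of Section 2 give $B\in C^{2}$, so every operation below is classical. Let $W$ be the vector field metrically dual to $\iota_X\overset{\circ}{B}$, i.e. $W^{i}=\overset{\circ}{B}^{ij}X_{j}$. Because $\overset{\circ}{B}$ is symmetric and trace-free, a direct computation gives $\mathrm{div}_gW=\langle \overset{\circ}{B},\nabla X\rangle+\langle \mathrm{div}_g\overset{\circ}{B},X\rangle$, where $\langle \overset{\circ}{B},\nabla X\rangle=\tfrac12\langle \overset{\circ}{B},\pounds_X g\rangle$ by symmetry. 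Writing $\overset{\circ}{B}=B-\tfrac1n(\mathrm{tr}_gB)g$ and using $\mathrm{div}_gB=0$ yields $\mathrm{div}_g\overset{\circ}{B}=-\tfrac1n\,d(\mathrm{tr}_gB)$, hence $\langle \mathrm{div}_g\overset{\circ}{B},X\rangle=-\tfrac1n X(\mathrm{tr}_gB)$. Substituting gives the pointwise relation
\begin{equation}
\tfrac12\langle \overset{\circ}{B},\pounds_X g\rangle=\mathrm{div}_gW+\tfrac1n X(\mathrm{tr}_gB).
\end{equation}

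Next I would integrate over the compact region $M_R=\{r\le R\}\subset M$, whose boundary is $\partial M\cup S_R$, and apply the divergence theorem. With $\nu$ the outward unit normal (pointing out of $M$ along $\partial M$ and towards infinity along $S_R$) this gives
\begin{equation}
\tfrac12\int_{M_R}\langle \overset{\circ}{B},\pounds_X g\rangle\,\mu_g=\int_{\partial M}\overset{\circ}{B}(X,\nu)\,\mu_{\partial M}+\int_{S_R}\overset{\circ}{B}(X,\nu)\,\mu_{S_R}+\tfrac1n\int_{M_R}X(\mathrm{tr}_gB)\,\mu_g.
\end{equation}
Both bulk integrands lie in $L^{1}(M)$. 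Indeed, since $\rho\ge0$ we have $\overset{\circ}{B}\in H_{s+1,\rho}\subset L^{2}$, which paired with $\nabla X\in L^{2}$ puts $\langle\overset{\circ}{B},\nabla X\rangle\in L^{1}(M)$; and $d(\mathrm{tr}_gB)\in H_{s,\rho+1}\subset L^{2}_{\rho+1}$ paired with $X\in L^{2}_{-(\rho+1)}$ gives, via the weighted Hölder inequality of Section 2 (the weights summing to $0$), $X(\mathrm{tr}_gB)\in L^{1}(M)$. Consequently the left-hand side and the last term of (2) converge as $R\to\infty$, so the limit $\lim_{R\to\infty}\int_{S_R}\overset{\circ}{B}(X,\nu)\,\mu_{S_R}$ exists.

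The crux is to show this limit vanishes, and this is where the decay hypotheses (and the threshold $\delta>-\tfrac{n}{2}$ guaranteeing that $g$ is genuinely asymptotic to the Euclidean metric) enter. Pairing $\overset{\circ}{B}\in L^{2}_{\rho}$ with $X\in L^{2}_{-(\rho+1)}$, whose weights sum to $-1$, the weighted multiplication property gives only $W\in L^{1}_{-1}(M)$, i.e. $\int_M|W|\,\sigma^{-1}\mu_g<\infty$, which need not improve to plain integrability. This is nonetheless enough: setting $\phi(r)=\int_{S_r}|W|\,\mu_{S_r}$, the coarea formula on the asymptotically euclidean end yields $\int_{R_0}^{\infty}\phi(r)\,\frac{dr}{r}\le C\,\|W\|_{L^{1}_{-1}(M)}<\infty$, while $\int_{R_0}^{\infty}\frac{dr}{r}=\infty$; together these force $\liminf_{r\to\infty}\phi(r)=0$. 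Along a sequence $R_k\to\infty$ realizing this liminf one has $\big|\int_{S_{R_k}}\overset{\circ}{B}(X,\nu)\,\mu_{S_{R_k}}\big|\le\phi(R_k)\to0$, and since the full limit already exists it must equal this subsequential value $0$. Letting $R\to\infty$ in (2) and multiplying by $2$ then gives the stated equality. The main obstacle is precisely this last step: matching the weight exponents so that $W\in L^{1}_{-1}(M)$ and exploiting the borderline divergence of $\int dr/r$ to kill the flux at infinity; the algebraic part leading to (1) is routine.
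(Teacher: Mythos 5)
Your proof is correct, but it takes a genuinely different route from the paper's. The paper never applies the divergence theorem on an exhaustion: it first records the identity for compactly supported smooth tensors (for which there is no flux at infinity at all), then approximates $B$ in $H_{s+1,\rho}$ by a sequence $B_n\in C^{\infty}_0$ and passes to the limit term by term, using exactly the weighted Cauchy--Schwarz pairings you use ($L^2$ against $L^2$ for $\langle B,\pounds_X g\rangle$, and $L^2_{\rho+1}$ against $L^2_{-(\rho+1)}$ for $X(\mathrm{tr}_gB)$), together with the continuity of the trace map $H_{s+1,\rho}(M)\to H_{s}(\partial M)$ to control the boundary integrals; it also splits the work in two stages (first the identity for $B$, then the correction coming from $\mathrm{tr}_gB\,\mathrm{div}_gX$), whereas you work with $\overset{\circ}{B}$ in one stroke. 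Your replacement is a classical divergence theorem on $M_R$ --- legitimate here, since $s>\frac{n}{2}+1$ makes $W=\overset{\circ}{B}(X,\cdot)^{\sharp}$ a $C^1$ field --- combined with the coarea/liminf device to kill the flux through $S_R$: the weights only give $W\in L^1_{-1}$, so the flux integrand is borderline, but the divergence of $\int dr/r$ forces $\liminf_R\phi(R)=0$, and the existence of the full limit (extracted from the $L^1$ convergence of the two bulk terms) upgrades the subsequential vanishing to a genuine limit; this last two-step logic is the part a careless reader might miss, and you state it correctly. What your route buys is self-containedness and transparency: no density argument, no trace theorem, and the mechanism by which the decay hypotheses annihilate the term at infinity is completely explicit. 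What the paper's route buys is robustness: all steps are norm estimates on approximating sequences, so no pointwise $C^1$/$C^2$ control, no exhaustion geometry (coordinate spheres per end, coarea, bounds on $|\nabla r|_g$) is needed, and the argument would survive in rougher settings where a classical pointwise application of the divergence theorem is unavailable.
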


\bigskip
We will employ this version of the generalized Pohozaev-Schoen identity to analyse different geometric problems, such as generalized solitons on AE manifolds. These soliton-type equations arise as a generalization of Ricci almost solitons, as proposed by Pigola \textit{et al.} \cite{Pigola}. Also these type of equations were analysed in the \textit{extrinsic} context by Al\'ias \textit{et al.} in \cite{ALR}. During the analysis of these generalized solitons, we will prove the following rigidity theorem.

\begin{thm*}
Let $(M^n,g)$ be an  asymptotically euclidean manifold without boundary, $n\geq 3$, where $g$ satisfies the hypotheses of the above theorem, and suppose that the tensor field $B\in H_{s+1,0}$, $s>\frac{n}{2}$, is $g$-divergence-free. Furthermore, let $X\in H_{s+2,-1}$ and $\mathrm{tr}_gB=0$. Then, under these assumptions, any $B$-generalized soliton is $B$-flat.
\end{thm*}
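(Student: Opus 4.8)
The plan is to feed the soliton data straight into the Pohozaev--Schoen identity of the first theorem and let the trace-free and no-boundary hypotheses do the work. Recall that a $B$-generalized soliton is a triple $(M,g,X)$, together with a potential $\lambda\in C^\infty(M)$, satisfying the structural equation $B+\frac{1}{2}\pounds_X g=\lambda g$, which specializes to the Ricci almost soliton equation of Pigola \emph{et al.} when $B=\mathrm{Ric}$. Taking the $g$-trace and using $\mathrm{tr}_gB=0$ gives $\lambda=\frac{1}{n}\mathrm{div}_gX$, so that $\lambda$ is governed by $X$ and, in particular, decays at the AE end. First I would verify that $(g,X,B)$ satisfies every hypothesis of the master identity with the choice $\rho=0$: by assumption $B$ is symmetric, $g$-divergence-free and lies in $H_{s+1,0}$, while $X\in H_{s+2,-1}$ with $s>\frac{n}{2}$ embeds, through the weighted Sobolev embeddings of Section~2, into $C^1$ as a bounded field with $\nabla X\in L^2$ and $X\in L^2_{-1}=L^2_{-(\rho+1)}$.

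With the hypotheses in place I would invoke the identity. Since $\partial M=\emptyset$ the boundary integral vanishes, and since $\mathrm{tr}_gB=0$ we have $\overset{\circ}{B}=B$ and $X(\mathrm{tr}_gB)=0$, so the right-hand side collapses entirely and the identity reduces to $\int_M\langle B,\pounds_X g\rangle\,\mu_g=0$. Substituting the soliton equation in the form $\pounds_X g=2\lambda g-2B$ and using $\langle B,g\rangle=\mathrm{tr}_gB=0$ yields the pointwise relation $\langle B,\pounds_X g\rangle=-2|B|^2$. Therefore $\int_M|B|^2\,\mu_g=0$, which forces $B\equiv0$; that is, the soliton is $B$-flat. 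Note that the sign convention in the structural equation is immaterial here, since either choice produces a definite multiple of $|B|^2$.

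The algebraic core is short, and the decisive point is that the trace-free assumption on $B$ annihilates the potential term $\lambda g$ in the pairing $\langle B,\pounds_X g\rangle$, leaving an expression of definite sign; this is what upgrades the vanishing integral to a pointwise rigidity statement. The main obstacle I anticipate is therefore not the algebra but the analytic bookkeeping in the first step: one must check that the prescribed decay and regularity of $X$ and $B$ place them exactly in the function spaces demanded by the master identity, so that the identity is applicable and no flux is lost at infinity. Establishing $\nabla X\in L^2$ and the integrability of the relevant contractions from $X\in H_{s+2,-1}$ and $B\in H_{s+1,0}$ requires the multiplication and embedding properties of the weighted spaces; once these inclusions are confirmed, the conclusion is immediate.
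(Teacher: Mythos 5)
Your proof is correct, and it takes a genuinely shorter route than the paper's at the decisive algebraic step. Both arguments begin identically: verify the hypotheses of the master identity with $\rho=0$ (your reading of the weighted norms is right: the $m=1$ term of the $H_{s+2,-1}$ norm gives $\nabla X\in L^2$, the $m=0$ term gives $X\in L^2_{-1}$, and the embedding gives $X\in C^1$ bounded), then use $\partial M=\emptyset$ and $\mathrm{tr}_gB=0$ to reduce the identity to $\int_M\langle B,\pounds_Xg\rangle\,\mu_g=0$. The difference is in which side of the soliton equation gets substituted into this pairing. You solve for $\pounds_Xg=2\lambda g-2B$ and obtain the pointwise identity $\langle B,\pounds_Xg\rangle=-2|B|_g^2$, so the vanishing integral immediately forces $B\equiv 0$, with no further input. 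The paper instead solves for $B=\lambda g-\tfrac{1}{2}\pounds_Xg$, rewrites everything in terms of the conformal Killing operator $\pounds_{X,\mathrm{conf}}g=\pounds_Xg-\tfrac{2}{n}\mathrm{div}_gX\,g$, obtains $\int_M|\pounds_{X,\mathrm{conf}}g|_g^2\,\mu_g=0$, and then must invoke the external rigidity result that an AE manifold carries no nontrivial conformal Killing field with this decay (Christodoulou--O'Murchadha, Maxwell) to conclude $X=0$ and hence $B$-flatness. The two computations are pointwise equivalent --- for traceless $B$ the soliton equation says exactly $B=-\tfrac{1}{2}\pounds_{X,\mathrm{conf}}g$, so $|\pounds_{X,\mathrm{conf}}g|^2=4|B|^2$ --- but your ordering makes the citation of the conformal Killing rigidity theorem unnecessary, which is a genuine simplification and keeps the proof self-contained modulo the master identity. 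What the paper's longer route buys is the stronger conclusion $X=0$ (triviality of the soliton vector field itself, not only $B=0$); if you wanted that as well, you could recover it from your conclusion by noting that $B=0$ turns the soliton equation into $\pounds_{X,\mathrm{conf}}g=0$ and only then appealing to the rigidity of conformal Killing fields.
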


From this theorem, we will extract some interesting geometric consequences. For instance, we will show that any AE Ricci almost soliton with zero scalar curvature is trivial, \textit{i.e}, is isometric to the euclidean flat space. 

\bigskip
Another important result, where we will make use of the generalized Pohozaev-Schoen identity, is the following almost-Schur theorem for AE manifolds (see in Section \ref{applications} a review and motivation for this type of inequalities): 

\begin{thm*}
Let $(M^n,g)$ is an $H_{s+3,\delta}$-asymptotically euclidean manifold without boundary, with $n\geq 3$, $s>\frac{n}{2}$, $\delta>-\frac{n}{2}$ and $\delta\geq -2$. Now, let $B$ be a symmetric $(0,2)$-tensor field such that $\mathrm{div}_gB=0$ and $B\in H_{s+1,\rho}$, with $\rho>-\frac{n}{2} + 2$, furthermore, suppose that $\rho\geq 0$, and denote its trace by  $b\doteq \mathrm{tr}_gB$.  Then:\\
I)  there is a constant $C$ independent of the specific choice of such tensor $B$, such that the following inequality holds
\begin{align}
||b||_{L^2}\leq n\left( \frac{n-1}{n} + C||\mathrm{Ric}_g||_{C^0_2}\right)^{\frac{1}{2}} ||\overset{\circ}{B}||_{L^2},
\end{align}
II) If $\mathrm{Ric}_g$ is non-negative, \textit{i.e}, $\mathrm{Ric}_g(X,X)\geq 0$ $\forall$ $X\in\Gamma(TM)$, then
\begin{align}
||b||_{L^2}\leq \sqrt{n(n-1)} ||\overset{\circ}{B}||_{L^2}.
\end{align}
\end{thm*}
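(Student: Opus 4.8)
The plan is to adapt the De Lellis--Topping argument for the almost-Schur lemma, replacing their integration-by-parts manipulations of the contracted Bianchi identity by a single application of the Pohozaev--Schoen identity of the first theorem, specialized to a gradient field. Concretely, I would first solve the Poisson equation $\Delta_g f=b$ on $M$, where $b=\mathrm{tr}_gB$. Since $b$ inherits the decay of $B\in H_{s+1,\rho}$, and since the weight hypotheses $\rho>-\frac n2+2$, $\rho\geq 0$ and $\delta\geq -2$ place us away from the exceptional weights, the weighted elliptic theory for the Laplacian on AE manifolds should furnish a solution $f$ with the decay and regularity needed below. In particular $\nabla f$ will be $C^1$ and bounded, with $\nabla^2 f\in L^2$ and $\nabla f\in L^2_{-(\rho+1)}$, so that the vector field $X=\nabla f$ meets exactly the hypotheses required to feed it into the Pohozaev--Schoen identity.

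With this choice, and since $\partial M=\emptyset$, the boundary term drops. Using $\pounds_{\nabla f}g=2\,\nabla^2 f$ together with the tracelessness of $\overset{\circ}{B}$ on the left, and one integration by parts on the right, the identity should collapse (with the geometers' convention $\Delta_g=\mathrm{div}_g\nabla$) to
\[
\|b\|_{L^2}^2=-\,n\int_M\langle \overset{\circ}{B},\overset{\circ}{\nabla^2 f}\rangle\,\mu_g,
\]
where $\overset{\circ}{\nabla^2 f}=\nabla^2 f-\frac{\Delta_g f}{n}g$ is the trace-free Hessian. Cauchy--Schwarz then yields
\[
\|b\|_{L^2}^2\leq n\,\|\overset{\circ}{B}\|_{L^2}\,\|\overset{\circ}{\nabla^2 f}\|_{L^2},
\]
so the whole problem reduces to controlling $\|\overset{\circ}{\nabla^2 f}\|_{L^2}$ by $\|b\|_{L^2}$.

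For the latter I would integrate the Bochner formula over $M$; the decay of $f$ secured above should kill the terms at infinity, giving
\[
\|\overset{\circ}{\nabla^2 f}\|_{L^2}^2=\frac{n-1}{n}\,\|b\|_{L^2}^2-\int_M\mathrm{Ric}_g(\nabla f,\nabla f)\,\mu_g .
\]
Part II is then immediate: if $\mathrm{Ric}_g\geq 0$ the last integral is nonnegative, so $\|\overset{\circ}{\nabla^2 f}\|_{L^2}^2\leq \frac{n-1}{n}\|b\|_{L^2}^2$, and inserting this into the Cauchy--Schwarz bound and cancelling one power of $\|b\|_{L^2}$ gives $\|b\|_{L^2}\leq \sqrt{n(n-1)}\,\|\overset{\circ}{B}\|_{L^2}$. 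For part I the Ricci term need not have a sign, so I would estimate
\[
-\int_M\mathrm{Ric}_g(\nabla f,\nabla f)\,\mu_g\leq \|\mathrm{Ric}_g\|_{C^0_2}\int_M\sigma^{-2}|\nabla f|^2\,\mu_g,
\]
the weight $2$ being exactly matched to the Hardy factor $\sigma^{-2}$, with $\sigma$ the weight function comparable to $|x|$ at infinity. It then remains to absorb $\int_M\sigma^{-2}|\nabla f|^2$ into $\|b\|_{L^2}^2$ via a weighted Hardy/elliptic estimate $\int_M\sigma^{-2}|\nabla f|^2\leq C\|b\|_{L^2}^2$ with $C$ depending only on $(M,g)$; combining with the Bochner identity gives $\|\overset{\circ}{\nabla^2 f}\|_{L^2}^2\leq\left(\frac{n-1}{n}+C\|\mathrm{Ric}_g\|_{C^0_2}\right)\|b\|_{L^2}^2$, and feeding this back into Cauchy--Schwarz produces inequality I.

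The conceptual skeleton (Pohozaev--Schoen $+$ Bochner $+$ Cauchy--Schwarz) is routine once it is in place; the real work---and the step I expect to be the main obstacle---is the weighted functional analysis making every formal manipulation legitimate on a non-compact manifold. This is where the precise indices matter: one must check that $\Delta_g f=b$ is solvable in the correct weighted Sobolev space and that its solution decays fast enough that (a) $X=\nabla f$ satisfies the hypotheses of the Pohozaev--Schoen identity, (b) all integrations by parts (in the identity and in the Bochner formula) have vanishing contributions at infinity, and (c) the weighted Hardy inequality $\int_M\sigma^{-2}|\nabla f|^2\leq C\|b\|_{L^2}^2$ holds with a uniform constant, so that $\|\mathrm{Ric}_g\|_{C^0_2}$ pairs correctly with the $\sigma^{-2}$ factor. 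The constraints $\rho>-\frac n2+2$, $\rho\geq 0$ and $\delta\geq -2$ in the statement are precisely the admissibility conditions guaranteeing these three points, and verifying them carefully is the crux of the proof.
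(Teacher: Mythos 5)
Your proposal is correct and follows essentially the same route as the paper's proof: solve $\Delta_g f=b$ in weighted Sobolev spaces, feed $X=\nabla f$ into the Pohozaev--Schoen identity (no boundary term), apply Cauchy--Schwarz, and control $\|\nabla^2 f-\tfrac{1}{n}g\Delta_g f\|_{L^2}$ through the integrated Bochner/Ricci identity, absorbing the Ricci term in part I by a weighted elliptic estimate --- the paper's version of your Hardy inequality is the a priori estimate $\|f\|_{H_{2,\rho-2}}\leq C\|\Delta_g f\|_{H_{0,\rho}}$ coming from injectivity of $\Delta_g$ on $H_{2,\rho-2}$ for $\rho-2>-\frac{n}{2}$. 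The one detail you elide that the paper treats explicitly is that solvability also needs the upper bound on the weight, $\rho-2<\frac{n}{2}-2$, so when $\rho\geq\frac{n}{2}$ one first uses the embedding $H_{s+1,\rho}\hookrightarrow H_{s+1,\sigma}$ for some $\sigma<\frac{n}{2}$ and runs the argument with that weight instead.
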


\bigskip
Notice that the hypotheses of the theorem split depending on whether $n=3,4$ or $n\geq 5$. In the first case, the hypotheses on the weights $\delta$ and $\rho$ are $\delta>-\frac{n}{2}$ and $\rho>-\frac{n}{2}+2$, while, if $n\geq 5$, we get $\delta\geq -2$ and $\rho\geq 0$. One reason for these seemingly odd behaviour is that we need, as an underlying hypotheses, both $\mathrm{Ric}_g$ and $B$ in $L^2$. In dimensions $3$ and $4$ this behaviour of the Ricci tensor is guaranteed by the condition $\delta>-\frac{n}{2}$. In contrast, for $n\geq 5$, this is not the case, and the condition $\delta\geq -2$ guarantees the desired asymptotic behaviour. The weight $\rho$ is subtlety tied to $\delta$ via the solutions of a PDE (see the proof of Theorem \ref{B-AS}).

From this general result, we will extract as corollaries an almost-Schur inequality for the scalar curvature, analogous to the one proved by De Lellis-Topping in \cite{Topping}; for $Q$-curvature, analogous to \cite{chinos}, and for the mean curvature of AE-hypersurfaces in Ricci-flat spaces, analogous to \cite{Cheng1}. Some of these results, in the compact case, where summarized and put together in \cite{Cheng2}.

\bigskip
Finally, we will analyse static potentials on AE manifolds. These structures have recieved plenty of interest and some important results, together with applications, can be consulted in J. Corvino's influential paper \cite{Corvino}. Strong rigidity of these structures has been observed as a consequence of integral geometric identities, for instance, in \cite{allan} by Barbosa \textit{et al.} and in \cite{Miao} by Miao-Tam for compact and AE manifolds respectively. In this context, we will show the validity of the following result.

\begin{thm*}
Suppose that $(M^n,g)$ is a $H_{s+3,\delta}$-AE manifold with $n\geq 3$, $s>\frac{n}{2}$ and $\delta>-1$, which admits a non-negative static potential $f$. Furthermore, suppose that $\mathrm{Ric}_g\in H_{s+1,\rho}$, for some $\rho>\frac{n}{2}-1$, and that $\partial M=f^{-1}(0)$ consists of $N$-closed connected components, labelled by $\{\Sigma_i \}_{i=1}^{N}$. Then, it follows that 
\begin{align}
\int_Mf|\mathrm{Ric}_g|^2_g\mu_g = \frac{1}{2}\sum_{i=i}^Nc_i\int_{\Sigma_i}R_{h_i}\mu_{\partial M},
\end{align}
where the constants $c_i=|\nabla f|_{\Sigma_i}$. In particular, if $\partial M=\emptyset$, then $(M^n,g)$ is isometric to $(\mathbb{R}^n,e)$, where $e$ is the euclidean metric.
\end{thm*}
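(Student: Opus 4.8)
The plan is to apply the generalized Pohozaev-Schoen identity (the first theorem of the paper) to the distinguished pair $B=\mathrm{Ric}_g$ and $X=\nabla f$, and then to read off the two sides geometrically. The first task is to organize the static structure. Recall that a static potential satisfies $\nabla^2 f-(\Delta f)g-f\,\mathrm{Ric}_g=0$. Taking the $g$-divergence of this equation and using the commutation formula $\mathrm{div}_g(\nabla^2 f)=d(\Delta f)+\mathrm{Ric}_g(\nabla f,\cdot)$ together with the contracted second Bianchi identity $\mathrm{div}_g\mathrm{Ric}_g=\tfrac{1}{2}dR_g$, all first-order terms cancel and one is left with $f\,dR_g=0$. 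Since $f>0$ on the interior (as $f\geq 0$ and $f^{-1}(0)=\partial M$) and $R_g\to 0$ at infinity by the AE decay of the curvature, this forces $R_g\equiv 0$ on the connected manifold $M$. Consequently $\Delta f=-\tfrac{fR_g}{n-1}=0$, the static equation reduces to $\nabla^2 f=f\,\mathrm{Ric}_g$, and $\mathrm{Ric}_g$ is both trace-free ($\mathrm{tr}_g\mathrm{Ric}_g=R_g=0$) and divergence-free ($\mathrm{div}_g\mathrm{Ric}_g=\tfrac{1}{2}dR_g=0$). Thus $B=\mathrm{Ric}_g$ is an admissible tensor and $\overset{\circ}{B}=\mathrm{Ric}_g$.

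Next I would verify that the hypotheses of the main identity hold for this choice. With $\mathrm{Ric}_g\in H_{s+1,\rho}$, $\rho>\tfrac{n}{2}-1$, the weighted assumptions guarantee $\mathrm{Ric}_g\in L^2$; and since $f$ is bounded with $f\to\mathrm{const}>0$ at infinity, elliptic regularity applied to $\nabla^2 f=f\,\mathrm{Ric}_g$ gives that $X=\nabla f$ is $C^1$ and bounded, with $\nabla X=\nabla^2 f\in L^2$ and the required weighted decay $X\in L^2_{-(\rho+1)}$. With the identity in force, its right-hand side simplifies: the trace term $\tfrac{2}{n}\int_M X(\mathrm{tr}_gB)\mu_g$ vanishes because $\mathrm{tr}_gB=R_g=0$. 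For the left-hand side, $\pounds_{\nabla f}g=2\nabla^2 f=2f\,\mathrm{Ric}_g$, so $\langle\overset{\circ}{B},\pounds_Xg\rangle=\langle\mathrm{Ric}_g,2f\,\mathrm{Ric}_g\rangle=2f|\mathrm{Ric}_g|^2_g$, giving $\int_M\langle\overset{\circ}{B},\pounds_Xg\rangle\mu_g=2\int_M f|\mathrm{Ric}_g|^2_g\mu_g$.

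It remains to evaluate the boundary term $2\int_{\partial M}\overset{\circ}{B}(X,\nu)\mu_{\partial M}$. On each component $\Sigma_i=f^{-1}(0)$ the gradient $\nabla f$ is normal, since $\Sigma_i$ is a regular level set; differentiating $|\nabla f|^2$ in a tangential direction and using $\nabla^2 f=f\,\mathrm{Ric}_g=0$ on $\Sigma_i$ shows that its length $|\nabla f|=c_i$ is constant along $\Sigma_i$. Because $f\geq 0$ inside $M$, the outward unit normal is $\nu=-\nabla f/c_i$, whence $\mathrm{Ric}_g(\nabla f,\nu)=-c_i\,\mathrm{Ric}_g(\nu,\nu)$. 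The same vanishing $\nabla^2 f|_{\Sigma_i}=0$ shows that the second fundamental form $A=|\nabla f|^{-1}\nabla^2 f|_{T\Sigma_i}$ vanishes, i.e.\ $\Sigma_i$ is totally geodesic, so the Gauss equation, together with $R_g=0$, reads $R_{h_i}=-2\,\mathrm{Ric}_g(\nu,\nu)$. Hence $\overset{\circ}{B}(X,\nu)=\mathrm{Ric}_g(\nabla f,\nu)=\tfrac{1}{2}c_iR_{h_i}$, and pulling the constants $c_i$ out of the integrals the boundary term equals $\sum_{i=1}^N c_i\int_{\Sigma_i}R_{h_i}\mu_{\partial M}$. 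Equating the two sides and dividing by $2$ yields the claimed identity $\int_M f|\mathrm{Ric}_g|^2_g\mu_g=\tfrac{1}{2}\sum_{i=1}^N c_i\int_{\Sigma_i}R_{h_i}\mu_{\partial M}$.

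Finally, for the rigidity statement, if $\partial M=\emptyset$ the right-hand side is empty, so $\int_M f|\mathrm{Ric}_g|^2_g\mu_g=0$; since $f>0$ everywhere and the integrand is non-negative, this forces $\mathrm{Ric}_g\equiv 0$, so $(M,g)$ is a Ricci-flat AE manifold. I would then conclude, using the standard fact — deducible from the rigidity case of the positive mass theorem — that a Ricci-flat asymptotically Euclidean manifold is isometric to $(\mathbb{R}^n,e)$; in dimension $n=3$ this is immediate since Ricci-flat already implies flat. I expect the two genuinely delicate points to be, first, the verification of the weighted integrability and decay needed to apply the identity legitimately to $X=\nabla f$ — this is exactly where the weight hypotheses on $\delta$ and $\rho$ are consumed, and where elliptic estimates for the static equation enter — and, second, the last rigidity step, which rests on the substantial external fact that a Ricci-flat asymptotically Euclidean manifold must be flat.
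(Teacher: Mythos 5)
Your overall strategy is exactly the paper's: apply the Pohozaev--Schoen identity with $B=\mathrm{Ric}_g$ and $X=\nabla f$, use the static system $\nabla^2f=f\,\mathrm{Ric}_g$, $\Delta_gf=0$, $R_g=0$ to turn the bulk term into $2\int_Mf|\mathrm{Ric}_g|^2_g\mu_g$, use total geodesy of $f^{-1}(0)$ and the Gauss equation to rewrite the boundary term as $\sum_ic_i\int_{\Sigma_i}R_{h_i}\mu_{\partial M}$, and invoke positive-mass rigidity (the paper's Theorem \ref{Ricciflatrigidity}) when $\partial M=\emptyset$. That geometric core is correct, and your self-contained derivation of $R_g\equiv 0$ (taking the divergence of the static operator to get $f\,dR_g=0$) is a fine substitute for the paper's citation of Corvino.

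The genuine gap is in the verification that $X=\nabla f$ satisfies the analytic hypotheses of the identity. You assert that ``$f$ is bounded with $f\to\mathrm{const}>0$ at infinity'' and that ``elliptic regularity applied to $\nabla^2f=f\,\mathrm{Ric}_g$'' then yields $X$ bounded and $C^1$, $\nabla X\in L^2$, and $X\in L^2_{-(\rho+1)}$. Neither claim is justified: a static potential on an AE manifold has, a priori, only linear growth (on $(\mathbb{R}^n,e)$ the static potentials are precisely the affine functions, e.g.\ $f=x^1$), and nothing you say rules out linear growth under the stated hypotheses; moreover, elliptic regularity is not the mechanism that produces boundedness of $\nabla f$. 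The paper closes exactly this gap with a dedicated lemma proved by an ODE comparison along minimizing geodesics: from $|\mathrm{Ric}_g|_e=o\big(d^{-(\delta+2+\frac{n}{2})}\big)$ one first gets the growth bound $|f|(x)\lesssim|x|$ (comparison of $f''=\mathrm{Ric}_g(\gamma',\gamma')f$ with $\omega''=\epsilon t^{-2}\omega$), hence $|\nabla^2f|_e=|f|\,|\mathrm{Ric}_g|_e\lesssim|x|^{-(1+\delta+\frac{n}{2})}\in L^2$ exactly when $\delta>-1$ (this is where that hypothesis is consumed), and then boundedness of $|\nabla f|$ follows by integrating the inequality $\big|\big(|\nabla f|^2\big)'\big|\lesssim|\nabla^2f|\,|\nabla f|$ along geodesics; finally, boundedness of $\nabla f$ together with $\rho>\frac{n}{2}-1$ gives $X\in L^2_{-(\rho+1)}$, since $(1+d^2)^{-(\rho+1)}$ is then integrable, and $f\in C^2$ (so $X\in C^1$) comes from Corvino's regularity result. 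Without this lemma, or an equivalent growth analysis, your application of the identity is not legitimate; with it, your proof coincides with the paper's.
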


This theorem, in dimension three, was proved in \cite{Miao}. We will show how their result has a natural (although non-trivial) extension to general dimensions. Furthermore, we will show how this result can be used to give very simple characterizations for the admissible topologies for the event horizon of static black holes. The characterization we will provide is not new, although it is derived using much simpler techniques than the ones used to prove more general results, such as the ones appearing in \cite{EGP} and \cite{Hawking}.  

\bigskip
With the above in mind, the organization of the paper will be as follows. We will first review some of the analytic tools needed for the core of the paper. Then, under sufficient conditions, we will establish the validity of the Pohozaev-Schoen identity on asymptotically euclidean manifolds. Finally, we will go into the applications, where we will show how some rigidity results of generalized solitons can be derived as a straightforward consequence of this identity. Furthermore, a generalized almost-Schur-type inequality, in this non-compact setting, can also be obtained using these tools. As our last application, we will show how some identities which are related with rigidity of static potentials on asymptotically euclidean manifolds, also follow directly from these principles.

\section{AE Manifolds}

In this section, the idea will be to present the main definitions and properties of AE manifolds that we will use in the subsequent sections. The analysis of these structures has been developed along the years, including the seminal works \cite{Bartnik}, \cite{Cantor}, \cite{CB1}, \cite{Maxwell-Dilts}, \cite{maxwell}, \cite{maxwell1}. We are following the classical notations and results established in \cite{CB1}, where the detailed proofs can be consulted.

\begin{defn}
A $n$-dimensional smooth Riemannian manifold $(M,e)$ is called euclidean at infinity if there exits a compact set $K$ such that $M\backslash K$ is the disjoint union of a finite number of open sets $U_i$, such that each $(U_i,e)$ is isometric to the exterior of an open ball in the Euclidean space.   
\end{defn}

On manifolds euclidean at infinity, we define $d=d(x,p)$ the distance in the Riemannian metric $e$ of an arbitrary point $x$ to a fixed point $p$. We will typically omit the dependence on $(x,p)$. 

\begin{defn}
A weighted Sobolev space $H_{s,\delta}$, with $s$ a nonnegative integer and $\delta\in\mathbb{R}$, is a space of tensor fields $u$ of some given type on the manifold $(M,e)$ Euclidean at infinity with generalized derivatives of order up to $s$ in the metric $e$ such that $D^mu(1 + d^2)^{\frac{1}{2}(m+\delta)}\in L^2$, for $0\leq m\leq s$. It is a Banach space with norm
\begin{align}\label{norm}
||u||^{2}_{H_{s,\delta}}\doteq \sum_{0\leq m\leq s}\int_M|D^mu|_e^2(1 + d^2)^{(m+\delta)}d\mu_{e}
\end{align}
where $D$ represents the $e$-covariant derivative and $\mu_e$ the Riemannian volume form associated with $e$.
\end{defn}

\begin{remark}
It is important to note that the space $C^{\infty}_0(M,E)$ of compactly supported tensor fields of some given type is dense in $H_{s,\delta}$ for all $s\in\mathbb{N}$ and $\delta\in\mathbb{R}$. 
\end{remark}

\begin{defn}\label{AE-def}
We will say that a Riemannian metric $g$ on $M$ is asymptotically euclidean if $g-e\in H_{s,\delta}$, $s>\frac{n}{2}$ and $\delta>-\frac{n}{2}$.
\end{defn}

These weighted spaces share several properties of the usual Sobolev spaces. For instance, we have the continuous embedding:

\begin{lemma}\label{sobolevembedding}
Let $(M, e)$ be a manifold euclidean at infinity. The following inclusion holds and is continuous:
\begin{align*}
H_{s,\delta}\subset C^{s'}_{\delta'}
\end{align*}
if $s' <s -n/2$ and $\delta' <\delta +n/2$.
\end{lemma}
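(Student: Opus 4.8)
The plan is to reduce the weighted embedding on $(M,e)$ to the classical weighted Sobolev embedding on Euclidean space (or on the exterior of a ball), exploiting the simple structure of a manifold euclidean at infinity. First I would split $M$ as $M = K \cup (M\setminus K)$, where $K$ is the compact core from the definition and $M\setminus K$ is the disjoint union of finitely many ends $U_i$, each isometric to the exterior of a ball in $(\mathbb{R}^n, e)$. On the compact piece $K$ the weight $(1+d^2)$ is bounded above and below by positive constants, so the $H_{s,\delta}$-norm restricted to $K$ is equivalent to the ordinary $H^s(K)$-norm, and the target weighted $C^{s'}_{\delta'}$-norm is likewise equivalent to the ordinary $C^{s'}(K)$-norm; thus on $K$ the claimed inclusion is exactly the standard (unweighted) Sobolev embedding $H^s(K)\subset C^{s'}(K)$ for $s' < s - n/2$, which holds on the compact manifold-with-boundary $K$ by Morrey's inequality. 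The content of the lemma therefore lies entirely in the behavior on each end $U_i$.

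On a single end, identified with $\{x\in\mathbb{R}^n : |x|>R\}$, the weight $(1+d^2)^{1/2}$ is comparable to $\langle x\rangle \doteq (1+|x|^2)^{1/2}$ and the metric is literally euclidean, so the statement becomes the standard weighted Sobolev embedding on an exterior euclidean domain. The cleanest way to prove this is by a scaling (dyadic annulus) argument: cover the end by annuli $A_j = \{2^j \le |x| \le 2^{j+1}\}$ for $j \ge j_0$, on which $\langle x\rangle \sim 2^j$. I would rescale each $A_j$ to a fixed reference annulus $A = \{1 \le |y| \le 2\}$ via $x = 2^j y$, apply the ordinary unweighted Sobolev inequality $\|v\|_{C^{s'}(A)} \le C\|v\|_{H^s(A)}$ on the fixed annulus $A$ (with a constant $C$ independent of $j$), and then track how the weights and the derivative orders transform under the dilation. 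Each $e$-covariant derivative $D^m$ picks up a factor $2^{-jm}$ under the rescaling, and the euclidean volume form transforms by $2^{jn}$; matching these scaling factors against the explicit powers of $\langle x\rangle$ built into the two norms is exactly what produces the two index conditions $s' < s - n/2$ and $\delta' < \delta + n/2$. Summing the resulting pointwise bounds $\sup_{A_j}\big(|D^m u| \langle x\rangle^{m+\delta'}\big) \le C\,\|u\|_{H_{s,\delta}(A_{j-1}\cup A_j \cup A_{j+1})}$ over $j$ (using a slight overlap of annuli to control the derivatives, and the fact that the squared $H_{s,\delta}$-contributions over the $A_j$ sum to at most $\|u\|_{H_{s,\delta}}^2$) yields the uniform bound $\|u\|_{C^{s'}_{\delta'}(U_i)} \le C\,\|u\|_{H_{s,\delta}(U_i)}$, giving both the inclusion and its continuity.

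The main obstacle, and the step requiring the most care, is getting the two strict inequalities $s' < s - n/2$ and $\delta' < \delta + n/2$ to come out correctly and uniformly in $j$ from the bookkeeping of scaling factors. The $-n/2$ deficit in both conditions is precisely the loss from converting an $L^2$-based norm to an $L^\infty$-based (pointwise) norm on an $n$-dimensional annulus, and the strictness is what allows the geometric series over the dyadic scales $j$ to converge; if one tried to take $s' = s - n/2$ or $\delta' = \delta + n/2$ the summation over annuli would fail. A secondary technical point is the passage from estimates on single annuli to a genuine $C^{s'}_{\delta'}$ bound: the weighted $C^{s'}_{\delta'}$-norm involves suprema of weighted derivatives over the whole end, so I would verify that the annulus-wise bounds assemble into a global supremum bound (rather than merely a sum), which is immediate once each annulus bound is controlled by the full $H_{s,\delta}$-norm. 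For a careful treatment and the full details one may consult the reference \cite{CB1} cited above, where these weighted spaces and their embeddings are developed; here I have only indicated the scaling mechanism that drives the result.
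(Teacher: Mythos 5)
The paper never proves this lemma: it is stated as a known property of the spaces $H_{s,\delta}$ and deferred to Choquet-Bruhat--Christodoulou \cite{CB1}, where it appears with the same pair of strict inequalities. So there is no internal proof to compare against; what you have written is essentially the standard argument from that literature, and it is correct. The splitting into compact core plus ends, the reduction on $K$ to the unweighted embedding $H^s(K)\subset C^{s'}(K)$, and the dyadic rescaling $x=2^j y$ on each end with the bookkeeping $|D^m u|\langle x\rangle^{m+\delta'}\sim 2^{j\delta'}\sup|D^m v|$ and $\|v\|^2_{H^s(A)}\sim 2^{-j(n+2\delta)}\|u\|^2_{H_{s,\delta}(A_j)}$ combine to give $\sup_{A_j}|D^m u|\,\langle x\rangle^{m+\delta'}\lesssim 2^{j(\delta'-\delta-n/2)}\,\|u\|_{H_{s,\delta}(A_j)}$ uniformly in $j$; taking the supremum over $j$ (as you correctly note, not a sum) finishes the proof, and the argument applies componentwise to tensor fields since the end charts are literally euclidean.

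One conceptual correction to your account of where the hypotheses enter. The strictness of $s'<s-n/2$ is genuinely needed: it is what the classical Sobolev embedding on the fixed annulus requires, i.e.\ the $L^2$-to-$L^\infty$ loss. But your claim that the strictness of $\delta'<\delta+n/2$ is what makes "the geometric series over dyadic scales converge" is not right: since the target norm is a supremum, nothing is summed over $j$; one only needs the factor $2^{j(\delta'-\delta-n/2)}$ to be bounded, and that holds already at equality $\delta'=\delta+n/2$. Indeed the embedding is true at the critical weight, and even with little-$o$ decay, because $\|u\|_{H_{s,\delta}(A_j)}\to 0$ as $j\to\infty$ (the squared contributions over disjoint annuli form a convergent series); this sharper statement is the form of the embedding in \cite{Bartnik}, after translating conventions via $\delta_b=-(\delta+\frac{n}{2})$. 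Under the lemma's stated, weaker hypotheses your argument goes through verbatim, so this is an inaccuracy in the commentary rather than a gap in the proof.
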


In the above lemma, the weighted-$C^k$ spaces are equipped with the following norm:
\begin{align*}
||u||_{C^k_{\beta}}\doteq \sup_{x\in M}\sum_{0\leq l\leq k}|D^lu|_e(1+d^2_e)^{\frac{1}{2}(\beta+l)}.
\end{align*}

Also, the following multiplication property is very useful to analyse the range of differential operators acting between these weighted spaces. 

\begin{lemma}\label{sobolevmultiplication}
If $(M,e)$ is an euclidean at infinity, then the following continuous multiplication property holds
\begin{align*}
H_{s_1,\delta_1}\times H_{s_2,\delta_2}&\mapsto H_{s,\delta},\\
(f_1,f_2)&\mapsto f_1\otimes f_2,
\end{align*}
if $s_1,s_2\geq s$, $s<s_1+s_2-\frac{n}{2}$ and $\delta<\delta_1+\delta_2+\frac{n}{2}$.
\end{lemma}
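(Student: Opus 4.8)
The plan is to reduce the statement to a weighted multiplication estimate on a single Euclidean end and then to the classical (unweighted) Sobolev product estimate on a fixed bounded annulus, via a dyadic rescaling argument. Since $(M,e)$ is euclidean at infinity, fix a compact set $K$ with $M\setminus K=\bigsqcup_i U_i$ and each $(U_i,e)$ isometric to the exterior of a ball in $\mathbb{R}^n$; choose a partition of unity subordinate to $\{\mathrm{int}\,K'\}\cup\{U_i\}$ for a slightly larger compact $K'$. On the core $K'$ the weights $(1+d^2)^{m+\delta}$ are bounded above and below by positive constants, so the weighted norms there are equivalent to ordinary Sobolev norms and the estimate reduces to the standard bounded-domain Sobolev multiplication theorem under the hypotheses $s_1,s_2\geq s$ and $s<s_1+s_2-\tfrac{n}{2}$. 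It therefore suffices to prove the estimate on each end, where $e$ is flat and $d$ may be identified with $|x|$.

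Next I would carry out a dyadic decomposition of an end $\mathbb{R}^n\setminus \bar B_R$ into annuli $A_j=\{2^j\leq |x|<2^{j+1}\}$ and rescale $A_j$ to the unit annulus $A_0=\{1\leq|y|<2\}$ via $x=2^jy$. The crucial bookkeeping is that under this scaling the weighted annular norm loses its dependence on the derivative order: writing $g(y)=f(2^jy)$, the identities $D_x^mf=2^{-jm}D_y^mg$, $dx=2^{jn}dy$, and $(1+|x|^2)^{m+\delta}\approx 2^{2j(m+\delta)}$ on $A_j$ combine so that $||f||_{H_{s,\delta}(A_j)}\approx 2^{j(\delta+n/2)}||g||_{H^s(A_0)}$, where $H^s(A_0)$ denotes the unweighted Sobolev norm. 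Applying the classical product estimate $||g_1g_2||_{H^s(A_0)}\leq C\,||g_1||_{H^{s_1}(A_0)}||g_2||_{H^{s_2}(A_0)}$ on the fixed annulus $A_0$ and translating back through the scaling yields, on each $A_j$, the bound $||f_1\otimes f_2||_{H_{s,\delta}(A_j)}\leq C\,2^{jp}||f_1||_{H_{s_1,\delta_1}(A_j)}||f_2||_{H_{s_2,\delta_2}(A_j)}$ with exponent $p=\delta-\delta_1-\delta_2-\tfrac{n}{2}$.

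The weight hypothesis $\delta<\delta_1+\delta_2+\tfrac{n}{2}$ is precisely the condition $p<0$, which is what makes the dyadic summation converge: squaring, summing over $j$, bounding the $A_j$-norm of $f_2$ by its global norm and summing the (finitely overlapping) $A_j$-norms of $f_1$, gives $||f_1\otimes f_2||_{H_{s,\delta}}\leq C\,||f_1||_{H_{s_1,\delta_1}}||f_2||_{H_{s_2,\delta_2}}$ on the end, which combined with the core estimate proves continuity of the multiplication map. For tensor fields one works in the background $e$-orthonormal frame on each end, where contractions are bounded pointwise operations, so the scalar estimates apply componentwise. By the density of $C_0^\infty$ tensor fields noted above, it is enough to establish the inequality for smooth compactly supported $f_1,f_2$, which also legitimizes the manipulations.

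I expect the main obstacle to be the scale-invariant form of the classical product estimate on the fixed annulus $A_0$, rather than the geometric assembly: the inequality $||g_1g_2||_{H^s}\leq C||g_1||_{H^{s_1}}||g_2||_{H^{s_2}}$ must be proved under the sharp index relations $s_1,s_2\geq s$ and $s<s_1+s_2-\tfrac{n}{2}$, and its proof splits into cases according to whether the $s_i$ exceed $\tfrac{n}{2}$ (where Lemma \ref{sobolevembedding} places the factors in $C^0$ and the estimate is elementary) or lie below it (where one must invoke Gagliardo--Nirenberg interpolation and distribute derivatives carefully via the Leibniz rule). A secondary, more routine point is verifying that the partition-of-unity cutoffs at dyadic scale $2^j$ contribute only harmless terms, since each derivative of a cutoff carries a factor $2^{-j}$ that is exactly absorbed by the corresponding weight.
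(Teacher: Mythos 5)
The paper does not actually prove this lemma: it is quoted as a classical property of the $H_{s,\delta}$ spaces, with the proofs deferred to Choquet-Bruhat--Christodoulou \cite{CB1}, so there is no internal argument to compare yours against; your proposal is, however, correct, and it is essentially the standard proof from that literature. The two computations that carry the whole argument both check out. First, with the paper's weight convention $(1+d^2)^{m+\delta}$, the factor $2^{-2jm}$ coming from differentiating the rescaled function cancels exactly against the factor $2^{2j(m+\delta)}$ coming from the weight, so the annular norm scales as $2^{j(\delta+n/2)}\|g\|_{H^s(A_0)}$ uniformly in the derivative order $m\leq s$ --- this cancellation is precisely what makes the reduction to a fixed unweighted annulus work, and it is special to this convention where the weight exponent grows with $m$. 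Second, the dyadic exponent you obtain is indeed $p=\delta-\delta_1-\delta_2-\tfrac{n}{2}$, so the hypothesis $\delta<\delta_1+\delta_2+\tfrac{n}{2}$ gives $2^{jp}\leq 1$ for $j\geq 0$, and the sum over annuli closes by taking $\sup_j$ on one factor and summing the disjoint-annulus norms of the other. Two minor remarks: (i) your concern about dyadic cutoffs is unnecessary, since the $A_j$ are disjoint and the norm is a sum of integrals, so you may apply the fixed-annulus product estimate to plain restrictions and use that the norm over the end is exactly the sum of the $A_j$-norms; (ii) since the paper defines $H_{s,\delta}$ only for nonnegative integer $s$, the classical estimate on $A_0$ that you flag as the main obstacle is the integer-order product estimate, provable by Leibniz, H\"older and Gagliardo--Nirenberg interpolation along exactly the case split you describe, so invoking it as known is legitimate.
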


\subsection*{AE manifolds with boundary}

The idea in this subsection is to extend to above ideas to AE manifolds with boundary. We will consider $(M,e)$ to be a manifold euclidean at infinity, according to the definition presented above, but allowing that within the compact region $K$ we have a boundary. Then, we have a finite number of end charts, say $\{U_i,\varphi_i \}_{i=1}^{k}$, with $\varphi(U_i)\simeq \mathbb{R}^n\backslash B_1(0)$, and a finite number of coordinate charts covering the compact region $K$, say $\{U_i,\varphi_i\}_{i=k+1}^N$. We can consider a partition of unity $\{\eta_i\}_{i=1}^N$ subordinate to the coordinate  cover $\{U_i,\varphi_i\}_{i=1}^N$, and let $V_i$ be equal to either $\mathbb{R}^n$ or $\mathbb{R}^n_{+}$, depending on whether $U_i$, $i\geq k+1$, is an interior of boundary chart respectively. Then, given a vector bundle $E\xrightarrow{\pi} M$, we can define $H_{s,\delta}(M,E)$ to be the subset of $H_{s,loc}(M,E)$ such that 
\begin{align}\label{locsob}
||u||_{H_{s,\delta}}&=\sum_{i=1}^k||{\varphi^{-1}_i}^{*}(\eta_i u)||_{H_{s,\delta}(\mathbb{R}^n)} + \sum_{i=k+1}^N||{\varphi^{-1}_i}^{*}(\eta_i u)||_{H_{s}(V_i)}<\infty. 
\end{align}

It should be noted that Lemmas \ref{sobolevembedding}-\ref{sobolevmultiplication} remain true, since they hold for both compact manifolds with boundary and for the asymptotic region.

We should now comment on the properties of the \textit{traces} of fields in $H_{s,\delta}(M)$. From the theory of Sobolev spaces on compact manifolds with boundary, we know that if $\Omega$ is a compact smooth manifold with smooth boundary $\partial\Omega$, then, for any function $u\in H_{s}(M)$, with $s>\frac{1}{2}$, we have a continuous trace map\footnote{Here we are appealing to Sobolev spaces defined for non-integer $s\in\mathbb{R}$, which we regard as a standard tool. See, for instance, \cite{Taylor} for the details.}
\begin{align*}
\tau: H_{s}(\Omega) \mapsto H_{s-\frac{1}{2}}(\partial\Omega).
\end{align*}

In the case that $u$ is actually a section of a vector bundle $E\xrightarrow{\pi} \Omega$, we get an analogous result. To see this, notice that the use of boundary charts trivializing $E$ naturally gives us an induced vector bundle $\tilde{E}$ over $\partial\Omega$ with the same typical fibre as $E$, whose vector bundle structure comes from the boundary coordinate charts, which come from the coordinate systems of $\Omega$ adapted  to the boundary that trivialize $E$. Clearly, we can both extend the sections of $\tilde{E}$ to section of $E$, and, vice versa, restrict sections of $E$ to sections of $\tilde{E}$. Hence, we will drop the tilde from $\tilde{E}$, and take this setting as implicit. Now, notice that the restriction of a section of $E$ to $\partial\Omega$ will produce a map $\tau u: \partial\Omega\mapsto E$. Furthermore, by localizing $u$ in a fashion analogous to what was done prior to introducing the norm (\ref{locsob}), we get that $u\in H_{s}(\Omega,E)$ iff given a partition of unity $\{\alpha_i \}$ subordinate to a coordinate cover $\{U_i,\varphi_i \}$, the localized fields $\alpha_i u \in H_s(U_i,E)$. We can then use boundary charts to trace these localized fields, and get, $\forall$ $s>\frac{1}{2}$, a continuous trace map \footnote{The interested reader can find the details on \cite{Palais}}
\begin{align*}
\tau:H_{s}(\Omega,E) \mapsto H_{s-\frac{1}{2}}(\partial\Omega,E).
\end{align*}

In the case of $AE$-manifolds, since the trace map only involves the compact core $K\subset\subset M$, from the above and a partition of unity argument, we get that, for $s>\frac{1}{2}$, we have continuous trace map
a continuous trace map
\begin{align}
\tau:H_{s,\delta}(M,E) \mapsto H_{s-\frac{1}{2}}(\partial M,E).
\end{align}

\begin{remark}
In the subsequent sections, in the notation $H_{s,\delta}(M,E)$, whenever the vector bundle is implicit from the context, we will simply write $H_{s,\delta}$. The same will be done with other functional spaces.
\end{remark}

\begin{remark}
We would like to point out that the definition of AE Riemannian manifold provided in Definition \ref{AE-def} follows the same lines as the definition given in \cite{CB1}-\cite{CB2} and \cite{Bartnik}, among others. In particular, the definition adopted by Bartnik in \cite{Bartnik} is related to ours by a simple shift in the weight parameter. Explicitly, $\delta_{b}=-(\delta + \frac{n}{2})$, where $\delta_b$ is the weighting parameter used in the definition of the functional spaces in \cite{Bartnik}. Nevertheless, analogous definitions can be given without invoking weighted Sobolev spaces. For instance, it is quite usual to define an AE metric on a manifold euclidean at infinity by its behaviour in each end in coordinate systems. That is, demanding $g$, near infinity, to satisfy a condition of the form
\begin{align}\label{AE-def2}
g_{ij}=\delta_{ij} + o_{k}(|x|^{-\tau}),
\end{align}
where $o_{k}(|x|^{-\tau})$ implies that $\partial^{l}g_{ij}=o(|x|^{-\tau-l})$ for all $1\leq l \leq k$ and $\tau>0$. It should be completely clear that the condition expressed in Definition \ref{AE-def} controls the behaviour of $g$ and its derivatives at infinity and implies that the metric satisfies a conditions of the form of (\ref{AE-def2}). Also, similar definitions could be given by means of weighted $C^k$-spaces and H\"older spaces. All of these definitions are very easily related by means of embedding theorems and analysis of the behaviour of the field at infinity, and the precise choice relies on the specific problems to be treated and which of them seems to be more transparently tailored to deal with such problems.
\end{remark}

\section{Pohozaev-Schoen identity on AE manifolds}

The idea now will be to try to deduce the integral identity in the context of AE manifolds. We will consider that such manifolds $M$ may have compact boundary $\partial M$ as described in the previous section.


\begin{thm}\label{thm1}
Let $(M,g)$ be a $H_{s,\delta}$-asymptotically euclidean manifold, with $s>\frac{n}{2}+1$ and $\delta>-\frac{n}{2}$. Also, let $B$ be a symmetric $(0,2)$-tensor field, satisfying $\mathrm{div}_gB=0$, and let $X$ be a vector field on $M$. Suppose $X\in C^1$ is bounded with $\nabla X\in L^2$. Furthermore, suppose that $X\in L^2_{-(\rho+1)}$ and $B\in H_{s+1,\rho}$, with $\rho\geq 0$. Then, the following equality holds:
\begin{align}\label{PS}
\int_M\langle \overset{\circ}{B},\pounds_X g \rangle\mu_g   = \frac{2}{n} \int_M X(\mathrm{tr}_gB)\mu_g + 2 \int_{\partial M}\overset{\circ}{B}(X,\nu)\mu_{\partial M}
\end{align}
\end{thm}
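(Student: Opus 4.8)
The plan is to reduce the statement to a pointwise divergence identity and then to integrate it, the only genuinely non-compact feature being the control of the flux through large coordinate spheres.

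First I would record the algebraic identity underlying the whole computation. Since $\overset{\circ}{B}$ is symmetric, for any vector field $X$ one has $\langle\overset{\circ}{B},\pounds_X g\rangle = 2\,\overset{\circ}{B}^{ij}\nabla_i X_j$, and writing $W$ for the vector field $g$-dual to the one-form $\overset{\circ}{B}(X,\cdot)$ gives the Leibniz-type relation $\overset{\circ}{B}^{ij}\nabla_i X_j = \mathrm{div}_g W - (\mathrm{div}_g\overset{\circ}{B})(X)$. Because $\overset{\circ}{B} = B - \frac{1}{n}(\mathrm{tr}_g B)\,g$ and $\mathrm{div}_g B = 0$ by hypothesis, the divergence of the trace-free part collapses to $\mathrm{div}_g\overset{\circ}{B} = -\frac{1}{n}\,\nabla(\mathrm{tr}_g B)$, so that $(\mathrm{div}_g\overset{\circ}{B})(X) = -\frac{1}{n}X(\mathrm{tr}_g B)$. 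Combining these, the pointwise identity
\[
\langle\overset{\circ}{B},\pounds_X g\rangle = 2\,\mathrm{div}_g W + \tfrac{2}{n}\,X(\mathrm{tr}_g B)
\]
holds on all of $M$, and the theorem will follow once this is integrated and the divergence term is handled.

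Second I would verify that every integral in sight converges, which is exactly what the weighted hypotheses are designed to ensure. From $\rho\geq 0$ and the definition of the weighted norm, $B\in H_{s+1,\rho}\subset L^2$, so together with $\nabla X\in L^2$ and $|\pounds_X g|\lesssim|\nabla X|$ the left-hand side is integrable by Cauchy--Schwarz. For the trace term I would use that $\nabla(\mathrm{tr}_g B) = \mathrm{tr}_g(\nabla_g B)$ is controlled by $|\nabla_g B|$, which lies in $L^2_{\rho+1}$; pairing with $X\in L^2_{-(\rho+1)}$ via Cauchy--Schwarz shows $X(\mathrm{tr}_g B)\in L^1$. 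Here I would also insert the routine but necessary remark that the weighted spaces are defined through the $e$-covariant derivative $D$, whereas $\nabla_g$ appears above; since $g-e\in H_{s,\delta}$ the difference tensor $\nabla_g - D$ decays and, via Lemmas \ref{sobolevembedding} and \ref{sobolevmultiplication}, passing between the two derivatives costs nothing in the relevant function spaces. In particular $\mathrm{div}_g W\in L^1$, being the difference of two $L^1$ terms. The regularity $s>\frac{n}{2}+1$ guarantees, through Lemma \ref{sobolevembedding}, that $B$ and hence $W$ are $C^1$, so the integrations by parts below are classical.

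Finally, and this is the heart of the AE extension, I would pass from the integrated identity to the stated boundary formula by controlling the flux at infinity. Fixing a smooth cutoff $\chi_R$ equal to $1$ on the metric ball $B_R$, supported in $B_{2R}$, equal to $1$ near the compact boundary $\partial M$ for $R$ large, and with $|\nabla\chi_R|\lesssim R^{-1}$ on the annulus $A_R = B_{2R}\setminus B_R$, integration by parts gives
\[
\int_M\chi_R\,\mathrm{div}_g W\,\mu_g = \int_{\partial M}\overset{\circ}{B}(X,\nu)\,\mu_{\partial M} - \int_{A_R}\langle\nabla\chi_R,W\rangle\,\mu_g .
\]
As $R\to\infty$ the left-hand side converges to $\int_M\mathrm{div}_g W\,\mu_g$ by dominated convergence, since $\mathrm{div}_g W\in L^1$. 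The remaining obstacle is to show the annulus term vanishes: the crude bound $|W|\lesssim|B|\,|X|$ does not suffice by itself, and the point is to exploit the weights. On $A_R$ one has $(1+d^2)^{1/2}\sim R$, so
\[
\Big|\int_{A_R}\langle\nabla\chi_R,W\rangle\Big| \lesssim \tfrac{1}{R}\int_{A_R}|B|\,|X| \lesssim \big\||B|(1+d^2)^{\rho/2}\big\|_{L^2(A_R)}\,\big\||X|(1+d^2)^{-(\rho+1)/2}\big\|_{L^2(A_R)},
\]
and both factors are tails of the convergent norms of $B\in L^2_\rho$ and $X\in L^2_{-(\rho+1)}$, hence tend to $0$. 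This is precisely why the weight on $X$ is shifted by one relative to that of $B$: the factor $(1+d^2)^{1/2}\sim R$ produced by the annulus exactly cancels the $R^{-1}$ coming from $\nabla\chi_R$. Passing to the limit yields $\int_M\mathrm{div}_g W\,\mu_g = \int_{\partial M}\overset{\circ}{B}(X,\nu)\,\mu_{\partial M}$, and substituting into the integrated pointwise identity gives (\ref{PS}). I would take care only with the orientation convention for $\nu$ on the inner boundary $\partial M$, which fixes the sign of the boundary term.
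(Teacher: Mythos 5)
Your proposal is correct, but it reaches the identity by a genuinely different mechanism than the paper. The paper never writes your pointwise identity for the trace-free part; instead it first proves $\tfrac{1}{2}\int_M\langle B,\pounds_Xg\rangle\mu_g=\int_{\partial M}B(X,\nu)\mu_{\partial M}$ for the full tensor $B$ and then splits $B=\overset{\circ}{B}+\tfrac{1}{n}(\mathrm{tr}_gB)\,g$, handling the trace part by a second integration by parts. More importantly, the justification of integration by parts on the non-compact manifold differs: the paper starts from the classical identity for $B\in C^{\infty}_0$, approximates the given $B$ in $H_{s+1,\rho}$ by compactly supported tensors $B_n$ (using the density of $C^{\infty}_0$ recalled in Section 2), and passes to the limit term by term, which forces it to invoke the continuity of the trace map $\tau:H_{s+1,\rho}(M)\to H_{s}(\partial M)$ to control the boundary integrals of $B-B_n$. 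You instead keep $B$ fixed, truncate the domain with cutoffs $\chi_R$, and kill the flux through the annuli $A_R$ by a tail estimate; this avoids both the density theorem and the trace theorem (your boundary term is the restriction of continuous fields, since $\chi_R\equiv 1$ near $\partial M$, and the embedding $H_{s+1,\rho}\subset C^1$ for $s>\tfrac{n}{2}+1$ makes everything classical), and it makes transparent why the hypotheses pair $X\in L^2_{-(\rho+1)}$ against $B\in H_{s+1,\rho}$: the weight shift by one exactly cancels the factor $R^{-1}$ from $|\nabla\chi_R|$ against the scale $R$ of the annulus. Both arguments ultimately rest on the same weighted Cauchy--Schwarz pairings ($\nabla B\in L^2_{\rho+1}$ against $X\in L^2_{-(\rho+1)}$, and $B,\nabla X\in L^2$), so the hypotheses enter in the same way; what your route buys is a more elementary, self-contained argument with clearer decay bookkeeping, while the paper's route stays inside the functional-analytic toolbox (density and traces) it sets up beforehand and reuses in its later approximation arguments. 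Your closing remark about fixing the orientation of $\nu$ on the inner boundary is the right caveat, and it is the only point where the two write-ups need to agree on a convention rather than on an estimate.
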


\begin{proof}

With a slight abuse of notation, making contact with standard notation for differential forms, we will denote the contraction of $B$ and $X$ by $i_XB\doteq B(X,\cdot)$.  Now, if we consider $B\in C^{\infty}_0$ and $X$ satisfying the hypotheses described above, then the following is standard:
\begin{align}\label{PS1}
\frac{1}{2}\int_M\langle B,\pounds_{X}g\rangle\mu_g=\int_M\mathrm{div}_g(i_XB)\mu_g=\int_{\partial M}B(X,\nu)\mu_{\partial M},
\end{align}
where $\nu$ denotes the unit normal of $\partial M$. Now, consider $\{B_n\}_{n=1}^{\infty}\subset C^{\infty}_0(M)$, such that 
\begin{align*}
B_n&\xrightarrow{H_{s+1,\rho}} B.
\end{align*}
Then, we have the following:
\begin{align*}
\Big\vert \int_M(\langle B,\pounds_{X}g\rangle-\langle B_n,\pounds_{X}g\rangle)\mu_g\Big\vert&\leq \int_M|\langle B-B_n,\pounds_{X}g\rangle|\mu_g,\\
&\leq \int_M|B-B_n|_g|\pounds_{X}g|_g\mu_g,\\
&\leq ||B-B_n||_{L^2}||\pounds_{X}g||_{L^2}\xrightarrow[n\rightarrow\infty]{} 0,
\end{align*}
where we have made use of the fact that, since $\nabla X\in L^2$, so does $\pounds_{X}g$. All this gives us 
\begin{align}\label{PS2}
\frac{1}{2}\int_M\langle B,\pounds_{X}g\rangle\mu_g=\lim_{n\rightarrow\infty}\int_{\partial M}B_n(X,\nu)\mu_{\partial M}.
\end{align}
Now, a similar argument gives us
\begin{align*}
\Big\vert\int_{\partial M}(B-B_n)(X,\nu)\mu_{\partial M}\Big\vert\lesssim \int_{\partial M} |B-B_n|_g|X|_g\mu_{\partial M}
\end{align*}
where  we have used that, given two tensor fields $A,B$ on $M$ then the following pointwise estimate holds $|C(A\otimes B)|_g\leq K(n,d_1,d_2)|A|_g|B|_g$, where $C$ denotes an arbitrary contraction and $K$ is a constant depending only on the dimension $n$ of $M$ and the dimensions $d_1$ and $d_2$ of the fibres. Notice that, since we have a continuous trace $\tau: H_{s+1,\rho}(M)\mapsto H_{s}(\partial M)$, then $\tau(B-B_n)\xrightarrow[n\rightarrow \infty]{} 0$ in $L^2(\partial M)$. Hence, from the above, we get
\begin{align*}
\Big\vert\int_{\partial M}(B-B_n)(X,\nu)\mu_{\partial M}\Big\vert\lesssim ||B-B_n||_{L^2(\partial M)} ||X||_{L^2(\partial M)}\xrightarrow[n\rightarrow \infty]{} 0
\end{align*}
Putting this together with (\ref{PS2}), we get
\begin{align}\label{PS3}
\frac{1}{2}\int_M\langle B,\pounds_{X}g\rangle\mu_g=\int_{\partial M}B(X,\nu)\mu_{\partial M}.
\end{align}

Now, if we write $B=\overset{\circ}{B}+\frac{1}{n}\mathrm{tr}_gB \: g$, where $\overset{\circ}{B}$ is the traceless part of $B$, we get the following
\begin{align}\label{PS4}
\langle B, \pounds_Xg \rangle &= \langle \overset{\circ}{B},\pounds_X g \rangle + \frac{2}{n}\mathrm{tr}_gB\mathrm{div}_gX.
\end{align}
Notice that $\mathrm{tr}_gB\mathrm{div}_gX=\mathrm{div}_g(\mathrm{tr}_gB X) - X(\mathrm{tr}_gB)$, which gives us that 
\begin{align*}
\int_M\mathrm{tr}_gB\mathrm{div}_gX\mu_g=\int_{\partial M}\mathrm{tr}_gB\langle X,\nu\rangle \mu_{\partial M} -\int_M X(\mathrm{tr}_gB)\mu_g \;\; \forall \;\; B\in C^{\infty}_{0}.
\end{align*}
Doing as above and approximating $B$ by compactly supported fields converging to $B$ in $H_{s+1,\rho}$, we get
\begin{align*}
\Big\vert\int_M(\mathrm{tr}_gB\mathrm{div}_gX-\mathrm{tr}_gB_n\mathrm{div}_gX)\mu_g\Big\vert&\leq \int_M|\mathrm{tr}_g(B-B_n)||\mathrm{div}_gX| \mu_g,\\
&\lesssim \int_M|B-B_n|_g|\nabla X|_g\mu_g,\\
&\leq ||B-B_n||_{L^2}||\nabla X||_{L^2}\xrightarrow[n\rightarrow\infty]{} 0,
\end{align*}
since, by hypothesis, $\nabla X\in L^2(M)$. The above shows that 
\begin{align}
\int_M\mathrm{tr}_gB\mathrm{div}_gX\mu_g=\lim_{n\rightarrow\infty}\Big\{ \int_{\partial M}\mathrm{tr}_gB_n\langle X,\nu\rangle \mu_{\partial M} -\int_M X(\mathrm{tr}_gB_n)\mu_g \Big\}.
\end{align}
Furthermore, notice that 
\begin{align*}
\Big\vert\int_M X(\mathrm{tr}_g(B-B_n))\mu_g\Big\vert&\leq \int_M|\langle \nabla\mathrm{tr}_g(B-B_n),X\rangle|\mu_g,\\
&\lesssim \int_M|\nabla(B-B_n)|_g|X|_g\mu_g,\\
&=\int_M|\nabla(B-B_n)|_g(1+d^2_e)^{\frac{1}{2}(\rho+1)}(1+d^2_e)^{-\frac{1}{2}(\rho+1)}|X|_g\mu_g,,\\
&\leq ||X||_{L^2_{-(\rho+1)}}||\nabla(B-B_n)||_{L^2_{\rho+1}}\xrightarrow[n\rightarrow\infty]{} 0
\end{align*}

Similarly, we also get that
\begin{align*}
\Big\vert\int_{\partial M}\mathrm{tr}_g(B - B_n)\langle X,\nu\rangle \mu_{\partial M}\Big\vert&\lesssim \int_{\partial M}|B - B_n|_g |X|_g,\\
&\leq ||B-B_n||_{L^2(\partial M)} ||X||_{{L^2(\partial M)}} \xrightarrow[n\rightarrow\infty]{} 0,
\end{align*}
where we have again used the continuity of the trace map. All of the above, gives us that
\begin{align}
\int_M\mathrm{tr}_gB\mathrm{div}_gX\mu_g=\int_{\partial M}\mathrm{tr}_gB\langle X,\nu\rangle \mu_{\partial M} -\int_M X(\mathrm{tr}_gB)\mu_g.
\end{align}
Thus, integrating (\ref{PS4}) and applying the above identity, we see that the following holds
\begin{align*}
\int_M\langle B, \pounds_Xg \rangle \mu_g = \int_M\langle \overset{\circ}{B},\pounds_X g \rangle\mu_g  +  \frac{2}{n}\int_{\partial M}\mathrm{tr}_gB\langle X,\nu\rangle \mu_{\partial M} - \frac{2}{n} \int_M X(\mathrm{tr}_gB)\mu_g.
\end{align*}
Putting this together with (\ref{PS3}), we get that the Pohozaev-Schone equality is valid in the present context:
\begin{align}
\int_M\langle \overset{\circ}{B},\pounds_X g \rangle\mu_g   = \frac{2}{n} \int_M X(\mathrm{tr}_gB)\mu_g + 2 \int_{\partial M}\overset{\circ}{B}(X,\nu)\mu_{\partial M}
\end{align}

\end{proof}

\section{Applications} \label{applications}

Before going into the applications of the above identity, we would like to state and give an independent proof of the following rigidity statement linked with AE-manifolds. It should be stressed that similar statements, with slightly different hypotheses, can be found in several papers, such as \cite{Lee-Parker},\cite{Bartnik},\cite{volcomp}.

\begin{thm}\label{Ricciflatrigidity}
Let $(M,e)$ be a, $n$-dimensional manifold euclidean at infinity, with $n\geq 3$, and consider a $H_{s,\delta}$-AE metric $g$, with $s> \frac{n}{2}+2$ and $\delta>-\frac{n}{2}$. Then, if $g$ Ricci-flat, it holds that $(M,g)$ is isometric to $\mathbb{R}^n$ with its standard flat metric.  
\end{thm}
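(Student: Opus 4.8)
The plan is to deduce the rigidity from the nonnegativity of the Ricci curvature together with the maximal volume growth forced by the asymptotically euclidean structure, i.e. from the equality (rigidity) case of the Bishop--Gromov relative volume comparison theorem. First note that the regularity hypothesis $s>\frac{n}{2}+2$ guarantees, via Lemma \ref{sobolevembedding}, that $g\in C^2$, so that $\mathrm{Ric}_g$ is defined classically and comparison geometry applies. Since $g$ is Ricci-flat we have in particular $\mathrm{Ric}_g\geq 0$, and since $g$ is AE the manifold $(M,g)$ is complete (the ends are uniformly euclidean and the core $K$ is compact). I would then fix a base point $p\in M$ and set $V(r)\doteq \mathrm{Vol}_g(B_r(p))/(\omega_n r^n)$, where $\omega_n$ is the volume of the unit ball in $\mathbb{R}^n$ and $B_r(p)$ is the $g$-geodesic ball. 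Bishop--Gromov gives that $r\mapsto V(r)$ is non-increasing, while the standard small-ball expansion yields $\lim_{r\to 0^+}V(r)=1$.

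The first main step is to compute the asymptotic volume ratio $\mathrm{AVR}(g)\doteq\lim_{r\to\infty}V(r)$ in terms of the number $k$ of ends of $M$. Here I would exploit the AE hypothesis: by Lemma \ref{sobolevembedding}, $g-e\in H_{s,\delta}$ with $\delta>-\frac{n}{2}$ embeds into $C^0_{\delta'}$ for some $\delta'>0$, so $|g-e|_e\lesssim d^{-\delta'}$ near infinity. This gives on the one hand $\mu_g-\mu_e=O(d^{-\delta'})\mu_e$, whence $\int_{B_r(p)}|\mu_g-\mu_e|\lesssim r^{n-\delta'}=o(r^n)$, and on the other hand that the $g$- and $e$-distances to $p$ are asymptotic, so that $g$- and $e$-balls of radius $r$ coincide up to lower-order sets. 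Since each of the $k$ euclidean ends contributes a full euclidean ball $\omega_n r^n(1+o(1))$ to $\mathrm{Vol}_g(B_r(p))$, I expect to obtain $\mathrm{Vol}_g(B_r(p))=k\,\omega_n r^n(1+o(1))$, i.e. $\mathrm{AVR}(g)=k$.

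Combining the two steps closes the argument. Monotonicity together with $\lim_{r\to 0^+}V(r)=1$ forces $V(r)\leq 1$ for all $r$, hence $k=\mathrm{AVR}(g)\leq 1$; since $M$ is noncompact there is at least one end, so $k=1$ and $\mathrm{AVR}(g)=1$. A non-increasing function with $V(0^+)=V(\infty)=1$ must satisfy $V\equiv 1$, which is exactly equality in Bishop--Gromov on every ball $B_r(p)$. By the rigidity part of the comparison theorem each $B_r(p)$ is then isometric to the euclidean ball of radius $r$, and letting $r\to\infty$ (the balls exhaust the complete manifold $M$) yields the desired global isometry $(M,g)\cong(\mathbb{R}^n,e)$. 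As a byproduct, this argument also shows that such a manifold has exactly one end.

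I expect the main obstacle to be the second step: converting the weighted-Sobolev decay $g-e\in H_{s,\delta}$, $\delta>-\frac{n}{2}$, into the clean volume asymptotics $\mathrm{Vol}_g(B_r(p))=k\,\omega_n r^n(1+o(1))$. The delicate points are that the pointwise decay exponent $\delta'>0$ may be arbitrarily small, so one only obtains $\int_{B_r}|g-e|_e\,\mu_e=o(r^n)$ rather than a fixed rate, and that one must compare the intrinsic $g$-geodesic balls (on which Bishop--Gromov is phrased) with the coordinate/euclidean balls in each end by controlling the two distance functions near infinity. Both facts are standard for AE metrics, but they require some care with the asymptotic comparison and with isolating the contribution of the compact core, which is only $O(1)$ and hence negligible at the scale $r^n$.
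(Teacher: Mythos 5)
Your proof is correct, but it takes a genuinely different route from the paper's. The paper argues analytically: writing $\mathrm{Ric}_g$ in harmonic coordinates as a second-order elliptic operator on the metric components, it bootstraps the decay of $g-e$ (via Theorem 5.1 of \cite{CB1} and an inductive argument as in \cite{CB2}) to $g_{ij}=\delta_{ij}+o_2(|x|^{-\rho'})$ for every $\rho'<n-2$; this decay makes the ADM mass well-defined and computable from the Einstein tensor at infinity (Theorem 2.1 of \cite{Miao-ADM}), so Ricci-flatness forces $m(g)=0$, and the conclusion follows from the rigidity case of the positive mass theorem (Proposition 10.2 of \cite{Lee-Parker}). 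You instead use only the pointwise sign $\mathrm{Ric}_g\geq 0$, completeness, and the volume asymptotics of the ends, closing with the equality case of Bishop--Gromov. What your route buys: it is more elementary and self-contained (no weighted elliptic regularity, no ADM mass, and no appeal to positive-mass rigidity, which in general dimension is a deep external input), it needs only $g\in C^2$ and the $C^0_{\delta'}$ decay of $g-e$ rather than improved derivative decay, and it yields the extra conclusion that such a manifold has exactly one end. What the paper's route buys: it exploits only the vanishing of the mass rather than the pointwise Ricci sign, so the same scheme extends to scalar-curvature-type rigidity where volume comparison is unavailable, and its decay-improvement step is machinery reused elsewhere in the paper. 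Two points to mind in your write-up: first, the definition of ``euclidean at infinity'' must be read as requiring at least one end (otherwise a flat torus would contradict the statement itself, and your step ``$M$ noncompact, hence $k\geq 1$'' is where this enters); second, the distance comparison $d_g/d_e\to 1$ and the volume-form comparison $\mu_g=(1+O(d^{-\delta'}))\mu_e$ that give $\mathrm{AVR}(g)=k$ do require the care you flag, but both follow from the embedding $H_{s,\delta}\subset C^0_{\delta'}$, $\delta'>0$, exactly as you outline, so the argument is sound.
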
 

\begin{proof}
This proof will be carried out via a number of well-known results. First, notice that under our hypotheses on $g$, its Ricci tensor can be written, on any particular end, in harmonic coordinates giving us an expression of the form
\begin{align}
{\mathrm{Ric}_g}_{ij}=-g^{ab}\partial_a\partial_bg_{ij} + Q_{ij}(g,\partial g),
\end{align}
where $Q(g,\partial g)$ is a quadratic polynomial in $\partial g$. Thus, in these coordinates, the operator appearing in the right hand side of the above identity is an elliptic second order operator acting on the metric component functions written in such coordinates. Nevertheless, since the left-hand side is a tensor, we get that if $\mathrm{Ric}_g\in L^2_{\tau}$, so is the left hand side on any given coordinate system. Thus, via elliptic theory we can improve the decay of the metric. This can be achieved by appealing to Theorem 5.1 in \cite{CB1} plus an inductive argument of the type of Lemma 3.6 in Appendix II of \cite{CB2}. In any case, the final result is that we can guarantee that $g-e\in H_{2,\tau-2}$, as long as $\tau-2<\frac{n}{2}-2$. In particular, if $g$ is Ricci-flat, then $g-e\in H_{2,\rho}\cap C^2$ for any $\rho<\frac{n}{2}-2$. This implies that, near infinity, $g_{ij}=\delta_{ij}+o_2(|x|^{-(\rho+\frac{n}{2})})$. Thus, we get that
\begin{align*}
g_{ij}=\delta_{ij}+o_2(|x|^{-\rho'}), \;\; \forall \;\; \rho'<n-2.
\end{align*}
The above, plus Ricci-flatness, imply that the ADM mass of $g$ is actually well-defined, and, in particular, we are under the decay assumptions of Theorem 2.1 of \cite{Miao-ADM}. Hence, we can rewrite the ADM mass of $g$, say $m(g)$, in terms of its Einstein tensor \textit{at infinity}. Thus, Ricci-flatness implies that $m(g)=0$. From this condition we get that $(M,g)$ is isometric to $(\mathbb{R}^n,\cdot)$ via the well-known rigidity of the positive mass theorem, which can be obtained in this setting via Proposition 10.2 in \cite{Lee-Parker}.
\end{proof}

\subsection*{Generalized Solitons}

In this section, we will consider $B$-\textit{generalzed solitons} and use the Pohozaev-Schoen identity (\ref{PS2}) to obtain some rigidity results. To begin with, we will consider an asymptotically euclidean manifold $(M,g)$, a symmetric tensor field $B$ and a vector field $X$. Then, we say that $X$ defines a $B$-generalized soliton structure on the AE-manifold $M$ if the following equation is satisfied
\begin{align}\label{Bgensol}
B+\frac{1}{2}\pounds_Xg=\lambda g,
\end{align}
for some function $\lambda$. Then, we have the following immediate lemma.
\begin{thm}\label{Bgensol}
Let $(M^n,g)$ be an  asymptotically euclidean manifold without boundary, $n\geq 3$, where $g$ satisfies the hypotheses of Theorem \ref{thm1}, and suppose that the tensor field $B\in H_{s+1,0}$, $s>\frac{n}{2}$, is $g$-divergence-free. Furthermore, let $X\in H_{s+2,-1}$ and $\mathrm{tr}_gB=0$. Then, under these assumptions, any $B$-generalized soliton is $B$-flat.
\end{thm}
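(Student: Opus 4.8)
The plan is to apply the Pohozaev--Schoen identity of Theorem \ref{thm1} verbatim, with the distinguished weight $\rho = 0$, and then convert the resulting integral relation into an $L^2$-vanishing statement for $B$ by means of the soliton equation. The argument is short once the functional-analytic hypotheses are checked, so the real work lies in confirming that the decay and regularity of $X$ are exactly those demanded by Theorem \ref{thm1}.

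First I would verify the hypotheses of Theorem \ref{thm1} for the triple $(g,B,X)$ with $\rho = 0$. The conditions on $g$ are assumed, and $B \in H_{s+1,0}$ is $g$-divergence-free by hypothesis, so it remains to control $X$. I claim that $X \in H_{s+2,-1}$ supplies everything needed. Since $s > \frac{n}{2}$, the weighted Sobolev embedding of Lemma \ref{sobolevembedding} gives $H_{s+2,-1} \subset C^1_{\delta'}$ for any $\delta' < -1 + \frac{n}{2}$; as $n \geq 3$ this range contains $\delta' = 0$, so $X$ is $C^1$ and bounded. Reading off the definition of the $H_{s+2,-1}$-norm, the $m = 0$ summand yields $X \in L^2_{-1} = L^2_{-(\rho+1)}$, while the $m = 1$ summand, whose weight is $(1+d^2)^{1+(-1)} = 1$, yields $\nabla X \in L^2$. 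Hence all the assumptions of Theorem \ref{thm1} hold.

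With these in hand I would invoke identity (\ref{PS}). Since $M$ is boundaryless the boundary integral drops, and since $\mathrm{tr}_g B = 0$ both the trace term on the right and the distinction between $B$ and $\overset{\circ}{B}$ disappear, so the identity collapses to $\int_M \langle B, \pounds_X g\rangle\, \mu_g = 0$. Finally I would use the defining soliton equation $B + \frac{1}{2}\pounds_X g = \lambda g$, which gives $\pounds_X g = 2(\lambda g - B)$ and therefore $\langle B, \pounds_X g\rangle = 2\lambda\,\mathrm{tr}_g B - 2|B|_g^2 = -2|B|_g^2$, again using $\mathrm{tr}_g B = 0$. Substituting produces $\int_M |B|_g^2\, \mu_g = 0$, whence $B \equiv 0$ and the soliton is $B$-flat.

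The main obstacle is not any single computation but the bookkeeping of weights: one must see that the single hypothesis $X \in H_{s+2,-1}$ simultaneously delivers boundedness, the $L^2$-gradient bound, and the precise weighted-$L^2$ decay $L^2_{-(\rho+1)}$ with $\rho = 0$ required to kill the cross term in Theorem \ref{thm1}. Once that alignment is confirmed, the rigidity conclusion is immediate.
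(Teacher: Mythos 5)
Your proof is correct, and your verification of the hypotheses of Theorem \ref{thm1} (the embedding $H_{s+2,-1}\subset C^1_{\delta'}$ giving boundedness, and the $m=0$, $m=1$ terms of the weighted norm giving $X\in L^2_{-1}$ and $\nabla X\in L^2$ with $\rho=0$) is the same as, indeed more explicit than, the paper's. Where you genuinely diverge is the endgame. The paper does not substitute the soliton equation directly into $\langle B,\pounds_X g\rangle$; instead it introduces the conformal Killing operator $\pounds_{X,\mathrm{conf}}g \doteq \pounds_X g-\frac{2}{n}\mathrm{div}_gX\, g$ and rewrites $\langle \overset{\circ}{B},\pounds_X g\rangle = -\frac{1}{2}|\pounds_{X,\mathrm{conf}}g|^2_g$, so that identity (\ref{PS}) forces $\pounds_{X,\mathrm{conf}}g\equiv 0$, i.e.\ $X$ is a conformal Killing field; it then invokes the non-trivial external fact that an AE manifold carries no non-zero conformal Killing field with this decay (\cite{Chris}, \cite{maxwell}) to conclude $X=0$, and only then $B=\lambda g$, hence $B=0$ by tracelessness. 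Your computation $\langle B,\pounds_X g\rangle = 2\lambda\,\mathrm{tr}_gB-2|B|^2_g=-2|B|^2_g$ is pointwise the same quantity (when $\mathrm{tr}_gB=0$ the soliton equation gives $B=-\frac{1}{2}\pounds_{X,\mathrm{conf}}g$, so $-2|B|^2_g=-\frac{1}{2}|\pounds_{X,\mathrm{conf}}g|^2_g$), but you read the vanishing of the integral as $\int_M|B|^2_g\mu_g=0$ and obtain $B\equiv 0$ outright. What your route buys is self-containedness: you prove exactly the stated conclusion ($B$-flatness) with no appeal to the Killing-field rigidity literature, which is where the paper's proof imports its only external ingredient. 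What the paper's detour buys is the stronger by-product $X=0$ (full triviality of the soliton structure, not just of $B$), although none of the subsequent corollaries actually use that extra information, so your argument suffices for everything downstream.
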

\begin{proof}
Under our working hypotheses, since $H_{s+2,-1}\hookrightarrow C^1_{-1}$, we have both that $X\in H_{1,-1}\cap C^1$. Thus, we know that the identity (\ref{PS2}) holds. Hence, if $\mathrm{tr}_gB=0$, we have that
\begin{align*}
\int_M\langle \overset{\circ}{B},\pounds_X g \rangle\mu_g=0.
\end{align*}
Furthermore, since $X$ satisfies (\ref{Bgensol}), the following identities holds
\begin{align*}
\langle \overset{\circ}{B},\pounds_X g \rangle &= \langle B,\pounds_{X,conf} g \rangle,\\
&=- \frac{1}{2}\langle \pounds_Xg,\pounds_{X,conf} g \rangle,\\
&=- \frac{1}{2}\langle \pounds_{X,conf}g,\pounds_{X,conf} g \rangle,
\end{align*}
where in the first line we have introduced the conformal Killing Laplacian, defined by $\pounds_{X,\text{conf}}g\doteq \pounds_Xg-\frac{2}{n}\mathrm{div}_gX g$, which defines a traceless tensor; in the second line we used the soliton equation (\ref{Bgensol}); and in the last line we used this traceless property of the conformal Killing Laplacian. With the above, we see that (\ref{Bgensol}) implies that 
\begin{align*}
\int_M|\pounds_{X,conf}g|_g^2\mu_g=0,
\end{align*}
which is equivalent to $X\in H_{s+1,-1}$ being a conformal Killing field of $g$. But it is known that there are no non-trivial Killing fields with this asymptotic behaviour \cite{Chris}-\cite{maxwell}, thus $X=0$, and thus $B$-flat.
\end{proof}

From this theorem, we get the following corollaries.

\begin{coro}
Consider an $n$-dimensional AE-Riemannian manifold $(M,g)$ without boundary, with $g-e\in H_{s+3,\delta}$, with $s>\frac{n}{2}$ and $\delta>-\min\{\frac{n}{2},2\}$ and $n\geq 3$. Then, if $R_g=0$, any Ricci almost soliton $X\in H_{s+2,\delta+1}$ on $(M,g)$ is isometric to $(\mathbb{R}^n,e)$, with $e$ the euclidean metric.
\end{coro}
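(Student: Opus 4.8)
The plan is to specialize Theorem \ref{Bgensol} to the case where $B$ is the Ricci tensor of $g$, exploiting the fact that a Ricci almost soliton is precisely a $\mathrm{Ric}_g$-generalized soliton in the sense of equation (\ref{Bgensol}). Recall that a Ricci almost soliton structure is a vector field $X$ and a function $\lambda$ satisfying $\mathrm{Ric}_g + \frac{1}{2}\pounds_X g = \lambda g$; comparing with (\ref{Bgensol}), this is exactly the statement that $X$ defines a $B$-generalized soliton with $B = \mathrm{Ric}_g$. So the strategy is to verify that, under the stated hypotheses, the tensor $B = \mathrm{Ric}_g$ satisfies the assumptions of Theorem \ref{Bgensol}, conclude that the structure is $\mathrm{Ric}_g$-flat (\emph{i.e.} Ricci-flat), and then invoke the earlier rigidity Theorem \ref{Ricciflatrigidity} to deduce isometry with Euclidean space.

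First I would check the analytic hypotheses of Theorem \ref{Bgensol} for $B = \mathrm{Ric}_g$. Since $g - e \in H_{s+3,\delta}$, the Ricci tensor, being a second-order expression in $g$ with quadratic first-order corrections, should lie in $H_{s+1,\delta+2}$ by the multiplication Lemma \ref{sobolevmultiplication}; the hypothesis $\delta > -\min\{\frac{n}{2},2\}$ is designed precisely so that $\delta + 2 \geq 0$ (giving the required weight $\rho = 0$, placing $B \in H_{s+1,0}$) while simultaneously $\delta > -\frac{n}{2}$ keeps $g$ admissible as an AE metric. The hypothesis on $X$, namely $X \in H_{s+2,\delta+1} \subset H_{s+2,-1}$ (using $\delta + 1 \geq -1$), matches the decay requirement of Theorem \ref{Bgensol}. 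The two remaining structural conditions are $\mathrm{div}_g B = 0$ and $\mathrm{tr}_g B = 0$: the first is the contracted second Bianchi identity together with the constancy implication we extract next, and the second is exactly the hypothesis $R_g = 0$, since $\mathrm{tr}_g \mathrm{Ric}_g = R_g$.

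The key point deserving care is the divergence-free condition. The contracted second Bianchi identity gives $\mathrm{div}_g \mathrm{Ric}_g = \frac{1}{2}\,dR_g$, so $R_g \equiv 0$ immediately yields $\mathrm{div}_g \mathrm{Ric}_g = 0$, as required. With all hypotheses verified, Theorem \ref{Bgensol} applies and tells us that the $\mathrm{Ric}_g$-generalized soliton is $\mathrm{Ric}_g$-flat, which here means $\mathrm{Ric}_g = 0$, \emph{i.e.} $(M,g)$ is Ricci-flat.

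Finally I would feed this into the rigidity statement. Once we know $g$ is Ricci-flat and AE with the given regularity, Theorem \ref{Ricciflatrigidity} concludes that $(M,g)$ is isometric to $(\mathbb{R}^n, e)$, completing the argument. The main obstacle I anticipate is the bookkeeping on the weights: one must confirm that differentiating $g$ twice and taking products lands $\mathrm{Ric}_g$ in $H_{s+1,0}$ rather than a space with a worse (more negative) weight, and that the regularity $s+3$ on $g$ is exactly what is needed so that $\mathrm{Ric}_g$ retains $s+1$ derivatives to satisfy the hypothesis $B \in H_{s+1,0}$ with $s > \frac{n}{2}$. The delicate split in the lower bound on $\delta$—between $-\frac{n}{2}$ in low dimensions and $-2$ in high dimensions—is precisely what guarantees both admissibility of $g$ and the nonnegative weight $\rho = 0$ for $B$, and checking this interplay carefully is where the real content of adapting the abstract theorem to the Ricci soliton case lies.
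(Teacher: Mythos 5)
Your proposal is correct and takes essentially the same route as the paper: verify the hypotheses of the generalized-soliton rigidity theorem (Theorem \ref{Bgensol}), conclude Ricci-flatness, and then invoke Theorem \ref{Ricciflatrigidity}. The only cosmetic difference is that the paper applies the theorem to the Einstein tensor $G_g=\mathrm{Ric}_g-\frac{1}{2}R_g g$, which is divergence-free by the contracted Bianchi identity without any assumption on $R_g$, whereas you take $B=\mathrm{Ric}_g$ and use $R_g\equiv 0$ to obtain $\mathrm{div}_g\mathrm{Ric}_g=\frac{1}{2}dR_g=0$; since $R_g=0$ forces $G_g=\mathrm{Ric}_g$, the two arguments coincide.
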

\begin{proof}
First, recall a Ricci almost soliton structure on a Riemannian manifold $(M,g)$ is given by a vector field $X$ satisfying the equation
\begin{align}\label{Riccisol1}
Ric_g+\frac{1}{2}\pounds_Xg=\lambda g,
\end{align}
for some function $\lambda$. Notice that the above equation is equivalent to the $G$-generalized soliton equation given by
\begin{align}\label{Riccisol2}
G_g +\frac{1}{2}\pounds_Xg=(\lambda - \frac{1}{2}R_g)g.
\end{align}
where $G_g\doteq \mathrm{Ric}_g-\frac{1}{2}R_gg$ stands for the Einstein tensor associated to the metric $g$. Notice that under our hypotheses for $g$, we have that $G_g\in H_{s+1,0}$. Thus, from this fact and the hypotheses on $X$, we are under the hypotheses of Theorem \ref{Bgensol}. Hence, if $\mathrm{tr}_gG_g=\frac{2-n}{2}R_g=0$, we have that $Ric_g=0$ as a consequence of the Theorem. Thus, from Theorem \ref{Ricciflatrigidity}, we get that $(M,g)$ is isometric to $(\mathbb{R}^n,e)$.

\end{proof}

Similarly, we can consider \textit{Codazzi-generalized solitons} for some AE-hypersurface $(M^n,g)$ isometrically immersed in a Ricci-flat space $(\tilde{M}^{n+1},\tilde{g})$. In such a case, from Codazzi's equation, we know that the tensor $\pi\doteq K - \mathrm{tr}_gK g$ satisfies the conservation equation
\begin{align}\label{piconservation}
\mathrm{div}_g\pi=0,
\end{align}
where $K$ stands for the \textit{extrinsic curvature} of the hypersurface $M\hookrightarrow \tilde{M}$, which is defined as $K\doteq \langle \RN{2},n \rangle$, where $\RN{2}$ stands for the second fundamental form associated to the immersion, and $n$ for the unit normal field. Now, we define a Codazzi-generalized soliton structure on such an immersion as the choice of a vector field $X$ on $M$ satisfying the following soliton-type equation
\begin{align}\label{codsol}
K + \frac{1}{2}\pounds_Xg = \lambda g,
\end{align} 
for some function $\lambda$ on $M$. 

\begin{coro}
Consider the setting described above, suppose that $M$ does not have a boundary, and suppose that $(M,g,X)$ provides a Codazzi-generalized soliton structure in a Ricci-flat space. Furthermore, suppose that $K\in H_{s+1,0}$ and $X\in H_{s+2,-1}$. Then, if $M$ is a minimal or maximal hypersurface\footnote{We are enabling $\tilde{g}$ to be either Riemannian or Lorentzian.}, that is, $\mathrm{tr}_gK=0$, then $(M,g)$ is a totally geodesic scalar-flat hypersurface. In particular, if the ambient space is actually flat, then $(M,g)$ is isometric to $(\mathbb{R}^n,e)$, with $e$ the euclidean metric.
\end{coro}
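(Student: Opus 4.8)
The plan is to apply Theorem \ref{Bgensol} to the Codazzi tensor $\pi \doteq K - (\mathrm{tr}_gK)\,g$, which by the Codazzi equation (\ref{piconservation}) satisfies $\mathrm{div}_g\pi=0$, and then translate the resulting $\pi$-flatness back into geometric information about the immersion. First I would verify that $\pi$ satisfies the hypotheses required to invoke the previous corollary's machinery: the minimality/maximality assumption $\mathrm{tr}_gK=0$ gives $\pi=K$, so the regularity hypothesis $K\in H_{s+1,0}$ directly yields $\pi\in H_{s+1,0}$, and moreover $\mathrm{tr}_g\pi=\mathrm{tr}_gK-n\,\mathrm{tr}_gK=(1-n)\mathrm{tr}_gK=0$. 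Combined with $X\in H_{s+2,-1}$ and the standing hypotheses on $g$, this places us squarely in the setting of Theorem \ref{Bgensol} with $B=\pi$.

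Next I would check that the Codazzi-soliton equation (\ref{codsol}) is equivalent to a $\pi$-generalized soliton equation of the form (\ref{Bgensol}). Since $\mathrm{tr}_gK=0$ forces $\pi=K$, equation (\ref{codsol}) reads $\pi+\frac{1}{2}\pounds_Xg=\lambda g$, which is literally the $\pi$-generalized soliton equation. Therefore Theorem \ref{Bgensol} applies and gives that the soliton is $\pi$-flat, i.e. $\pi\equiv 0$. Because $\pi=K$ under our hypothesis, this means $K\equiv 0$, so the second fundamental form vanishes and $(M,g)$ is totally geodesic in $(\tilde M,\tilde g)$.

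It then remains to extract the scalar-flatness. For this I would invoke the Gauss equation, which relates the scalar curvature of $M$ to that of the ambient $\tilde M$ and to the extrinsic curvature. For a hypersurface the Gauss equation gives a formula of the schematic form $R_g = R_{\tilde g} - 2\,\tilde{\mathrm{Ric}}(n,n) + (\mathrm{tr}_gK)^2 - |K|^2_g$ (with the appropriate sign for the Lorentzian case absorbed into the definition of $K$). Since the ambient space is Ricci-flat we have $R_{\tilde g}=0$ and $\tilde{\mathrm{Ric}}(n,n)=0$, and since $K\equiv 0$ the extrinsic terms vanish as well, yielding $R_g=0$. Hence $(M,g)$ is a totally geodesic scalar-flat hypersurface.

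Finally, for the last assertion, if the ambient space is flat rather than merely Ricci-flat, then the Gauss equation together with $K\equiv 0$ forces the full Riemann tensor of $(M,g)$ to vanish, so $(M,g)$ is flat; in particular it is Ricci-flat, and Theorem \ref{Ricciflatrigidity} applies to conclude that $(M,g)$ is isometric to $(\mathbb{R}^n,e)$. The main obstacle I anticipate is a bookkeeping one rather than a conceptual one: being careful with signs and with the Riemannian-versus-Lorentzian dichotomy in the Gauss equation (reflected in the footnote allowing $\tilde g$ to be Lorentzian), and confirming that the decay hypotheses on $K$ and $g$ are genuinely enough to apply Theorem \ref{Ricciflatrigidity} in the flat-ambient case, which may require noting that total geodesy propagates the decay of $g-e$ appropriately.
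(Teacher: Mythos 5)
Your proposal is correct and takes essentially the same route as the paper: under $\mathrm{tr}_gK=0$ the Codazzi tensor $\pi$ coincides with $K$ and is divergence-free, the generalized-soliton rigidity theorem then gives $K\equiv 0$, and the Gauss equation yields scalar-flatness (ambient Ricci-flat) and full flatness of $(M,g)$ (ambient flat). The only cosmetic deviation is the final step, where the paper concludes the isometry with $(\mathbb{R}^n,e)$ directly from $\mathrm{Riem}_g=0$ while you pass through the Ricci-flat rigidity result (Theorem \ref{Ricciflatrigidity}); both are fine.
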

\begin{proof}
Since our ambient manifold is Ricci-flat, we know that the Codazzi equation for hypersurfaces implies that (\ref{piconservation}) holds. Furthermore, the hypothesis of $\mathrm{tr}_gK=0$ implies that $\pi=K$, and thus $K$ is a \textit{conserved} symmetric tensor, which satisfies all the hypotheses demanded for the tensor field $B$ in Theorem \ref{thm1}. Furthermore, following the same arguments as in the previous corollary, we have that $X$ also satisfies all the hypotheses imposed in Theorem \ref{thm1}. Thus, using Theorem \ref{Bgensol}, we immediately get that $M\hookrightarrow\tilde{M}$ is totally geodesic. Now, notice that the Gauss equation can be written en the following way. Given $X,Y,Z,W\in\Gamma(TM)$, we have that
\begin{align}\label{Gauss}
\tilde{g}(\tilde{R}(X,Y)Z,W)=g(R(X,Y)Z,W) + \epsilon\big(  K(X,Z)K(Y,W) - K(Y,Z)K(X,W) \big),
\end{align}
where $\epsilon=\tilde{g}(n,n)=\pm 1$, depending on whether the normal direction is \textit{space-like} or \textit{time-like}. Now, we have just shown that $M$ is totally geodesic, thus $K=0$. Thus, consider an orthonormal frame $\{E_{\alpha}\}_{\alpha=0}^n$, where $E_{0}=n$ and thus $E_i$ is tangent to $M$ $\forall$ $i=1,\cdots,n$, so that we can write
\begin{align*}
\mathrm{Ric}_g(X,Y)&=\sum_{i=1}^n\tilde{g}(\tilde{R}(E_i,Y)Z,E_i),\\
&=\mathrm{Ric}_{\tilde{g}}(X,Y)-\epsilon\tilde{g}(\tilde{R}(n,Y)Z,n),\\
&=-\epsilon\tilde{g}(\tilde{R}(n,Y)Z,n),
\end{align*} 
where in the third line we have used the hypotheses that $\tilde{g}$ is Ricci-flat. From this, we get
\begin{align*}
R_g=-\epsilon\sum_{j=1}^n\tilde{g}(\tilde{R}(n,E_j)E_j,n)=-\epsilon\sum_{j=1}^n\tilde{g}(\tilde{R}(E_j,n)n,E_j)=-\epsilon\mathrm{Ric}_{\tilde{g}}(n,n)=0,
\end{align*}
which proves that $(M,g)$ is scalar-flat. Finally, since $K=0$, it is a trivial consequence of (\ref{Gauss}) that if $\mathrm{Riem}_{\tilde{g}}=0$, then $\mathrm{Riem}_{g}=0$ and thus $(M^n,g)$ is isometric to $(\mathbb{R}^n,e)$.

\end{proof}
\begin{remark}
In order to gain some insight about the kind of geometric picture that a Codazzi-soliton represents, it is an interesting observation to note that, following closely Al\'ias-Lira-Rigoli \cite{ALR}, we can understand (\ref{codsol}) following the kind of definition provided in \cite{ALR} for mean curvature flow solitons. That is, if, given an immersion $\psi:M^n\mapsto \tilde{M}^{n+1}$, we say it defines a Codazzi-soliton with respect to a conformal Killing field $X\in\Gamma(T\tilde{M})$ if it satisfies
\begin{align*}
X^{\perp}=-n,
\end{align*}
along $\psi$, where $X^{\perp}$ denotes the orthogonal component of $X$ to $\psi(M)$ and $n$ denotes the unit normal of $\psi(M)$. Then, following the same line of argument presented in \cite{ALR}, equation (\ref{codsol}) is an equation that any Codazzi-soliton must satisfy. Thus, in this sense, Codazzi solitons stand in perfect analogy with interesting soliton-like equations related to interesting geometric flows.  

\end{remark}

\begin{remark}
Taking into account the above remark and corollary, it is also interesting to note that, in the case of manifolds euclidean at infinity, admitting a scalar-flat metric is not a trivial statement. This can be viewed as a consequence of some important results, concerning for instance the Yamabe classification for AE manifolds \cite{Maxwell-Dilts} and topological obstructions related with the positive mass theorem \cite{Schoen-Yau}. More explicitly, using the Yamabe classification provided in \cite{Maxwell-Dilts}, we know that AE manifolds produced by removing points from closed manifolds which do not admit Yamabe positive metric do not carry any metric of zero scalar curvature. Using the results established in \cite{Schoen-Yau}, it is possible to produce a wide variety of closed manifolds which do not admit a Yamabe positive metric. For instance, their main claim is that manifolds of the form $M^n\# T^n$, with $M^n$ closed, lie in this category. Now, all this implies that AE manifolds obtained by removal of a finite number of points from such closed manifolds do not admit any Codazzi-soliton structure in any Ricci-flat space. 
\end{remark}

Similarly to what we have done above, we could easily define new soliton-type equation related with other conserved second rank tensor fields, which we could relate to other generalized geometric flows. 

\subsection*{Generalized Almost-Schur type inequality}

The classical Schur's lemma says that every connected Einstein manifold of dimension $n\geq 3$ has constant scalar curvature. In \cite{Topping}, De Lellis and Topping studied a quantitative version of the classical Schur Lemma, that allows to infer that, if a closed Riemannian manifold is close to being Einstein in the $L^{2}$ sense, then its scalar curvature is close to a constant in $L^{2}$. In this scope, they proved the so called ``Almost Schur Lemma" that claims if $(M^{n},g)$ is a closed Riemannian manifold of dimension $n\geq 3$, with nonnegative Ricci curvature, then

$$\displaystyle\int_{M}(R-\bar{R})^{2}\mu_{g}\leq \frac{4n(n-1)}{(n-2)^{2}}\displaystyle\int_{M}\left|Ric-\frac{R}{n}g\right|^{2}\mu_{g},$$
where we denote $\bar{R}=\frac{1}{Vol M}\int_{M}R\mu_{g}$. Furthermore, the equality occurs if, and only if, $M$ is an Einstein manifold.

We observe that a central step in the proof is the following integral identity (see \cite{Topping})

$$-\displaystyle\int_{M}\langle dR, df\rangle \mu_{g}=\frac{2n}{n-2}\displaystyle\int_{M}\langle{\stackrel{\circ}{Ric}},D^{2}f\rangle,$$
where $f$ is a solution of a PDE. In fact, this integral formula is the Pohoz\v{a}ev-Sch\"oen identity, if $X=\nabla f$ and $\partial M=\emptyset$. In fact, this idea can be used to obtain these type of inequalities for more general tensors and under others suitable conditions on the Ricci curvature (\cite{Barbosa},\cite{Cheng1},\cite{Cheng2}). The idea of this section is to generalize these type of inequalities obtained in \cite{Topping} to non-compact manifolds in our case of interest. Thus, suppose that $(M^n,g)$ is an $H_{s+3,\delta}$-asymptotically euclidean manifold, with $n\geq 3$, $\delta>-\frac{n}{2}$, $\delta\geq -2$ and $s>\frac{n}{2}$. Recall from \cite{CB1} that $\Delta_g:H_{s+2,\delta_0}\mapsto H_{s,\delta_0+2}$ is an isomorphism for $-\frac{n}{2}<\delta_0<\frac{n}{2}-2$. It will be an interesting observation to note that this is not true for closed manifolds, where what we have is that the operators of the form $\Delta_g-a:H_{s+2}\mapsto H_s$ with $a>0$ and $a\in H_{s}$  have this isomorphism property. This difference can be traced back to the fact that on closed manifolds constant functions are in the Kernel of $\Delta_g$, while on AE-manifolds we have a Poincar\'e inequality \cite{Bartnik}-\cite{Maxwell-Dilts}, which guarantees that $\Delta_g$ has the isomorphism property. In fact, a stronger statement that can be made. That is, given a $H_{s+1,\delta}$-AE metric, with $s>\frac{n}{2}$ and $\delta>-\frac{n}{2}$, then the Laplacian operator associated the $g$, that is $\Delta_g$, is injective on $H_{2,\rho}$ for any $\rho>-\frac{n}{2}$. This result is a consequence of Corallary 4.3 presented in Appendix 2 of \cite{CB2} and an application of the Sobolev embedding theorems. These points are key features that will allow us to produce an almost-Schur-type inequality without any restrictions on the Ricci-curvature. It is known that, for $n\geq 5$, being bounded from below is a necessary condition in the case of closed manifolds \cite{Topping}. 

\begin{thm}\label{B-AS}
Let $(M^n,g)$ is an $H_{s+3,\delta}$-asymptotically euclidean manifold without boundary, with $n\geq 3$, $s>\frac{n}{2}$, $\delta>-\frac{n}{2}$ and $\delta\geq -2$. Now, let $B$ be a symmetric $(0,2)$-tensor field satisfying the hypotheses of Theorem \ref{thm1}, that is, $\mathrm{div}_gB=0$ and $B\in H_{s+1,\rho}$, with $\rho\geq 0$, furthermore, suppose that $\rho>-\frac{n}{2} + 2$, and denote its trace by  $b\doteq \mathrm{tr}_gB$.  Then:\\
I)  there is a constant $C$ independent of the specific choice of such tensor $B$, such that the following inequality holds
\begin{align}\label{AS}
||b||_{L^2}\leq n\left( \frac{n-1}{n} + C||\mathrm{Ric}_g||_{C^0_2}\right)^{\frac{1}{2}} ||\overset{\circ}{B}||_{L^2},
\end{align}
II) If $\mathrm{Ric}_g$ is non-negative, \textit{i.e}, $\mathrm{Ric}_g(X,X)\geq 0$ $\forall$ $X\in\Gamma(TM)$, then
\begin{align}\label{AS.01}
||b||_{L^2}\leq \sqrt{n(n-1)} ||\overset{\circ}{B}||_{L^2}.
\end{align}
\end{thm}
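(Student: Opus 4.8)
My plan is to run the De Lellis--Topping strategy, feeding the Pohozaev--Schoen identity of Theorem \ref{thm1} with a gradient field whose potential is manufactured from $b$ by inverting the Laplacian. First I would invoke the isomorphism $\Delta_g\colon H_{s+2,\delta_0}\to H_{s,\delta_0+2}$, valid for $-\tfrac{n}{2}<\delta_0<\tfrac{n}{2}-2$, to solve $\Delta_g f=-b$. Since $b=\mathrm{tr}_gB\in H_{s+1,\rho}\subset H_{s,\rho'}$ for every $\rho'\leq\rho$, and since the hypothesis $\rho>-\tfrac{n}{2}+2$ together with $\rho\geq 0$ guarantees the existence of some $\rho'$ with $0\leq\rho'<\tfrac{n}{2}$ and $\rho'>2-\tfrac{n}{2}$, I obtain a solution $f\in H_{s+2,\rho'-2}$. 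This is exactly the point where the weight $\rho$ is tied to $\delta$: the solvability window $2-\tfrac{n}{2}<\rho'<\tfrac{n}{2}$ for $\delta_0=\rho'-2$ is what forces $\rho>-\tfrac{n}{2}+2$. A short check with Lemma \ref{sobolevembedding} then shows that $X\doteq\nabla f\in H_{s+1,\rho'-1}$ is $C^1$, bounded, with $\nabla X=D^2f\in L^2$ and $X\in L^2_{-(\rho+1)}$, so that $X$ meets all the hypotheses of Theorem \ref{thm1}.

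With $\partial M=\emptyset$, $\pounds_{\nabla f}g=2D^2f$ and $X(b)=\langle\nabla f,\nabla b\rangle$, the identity (\ref{PS}) collapses to $\int_M\langle\overset{\circ}{B},D^2f\rangle\mu_g=\tfrac{1}{n}\int_M\langle\nabla f,\nabla b\rangle\mu_g$. Integrating by parts on the right (justified because every field lies in a weighted space with enough decay, exactly as in the approximation argument of Theorem \ref{thm1}) and using $\Delta_g f=-b$ converts the right-hand side into $\tfrac{1}{n}||b||_{L^2}^2$. Since $\overset{\circ}{B}$ is trace-free, $\langle\overset{\circ}{B},D^2f\rangle=\langle\overset{\circ}{B},\overset{\circ}{D^2f}\rangle$, where $\overset{\circ}{D^2f}=D^2f+\tfrac{b}{n}g$ is the trace-free Hessian, so Cauchy--Schwarz gives $\tfrac{1}{n}||b||_{L^2}^2\leq ||\overset{\circ}{B}||_{L^2}\,||\overset{\circ}{D^2f}||_{L^2}$.

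Next I would feed in the integrated Bochner formula. On a manifold without boundary, with the decay at hand to discard the term at infinity, one has $\int_M|D^2f|^2\mu_g=\int_M(\Delta_g f)^2\mu_g-\int_M\mathrm{Ric}_g(\nabla f,\nabla f)\mu_g$. Splitting off the trace part via $|D^2f|^2=|\overset{\circ}{D^2f}|^2+\tfrac{1}{n}(\Delta_g f)^2$ and using $\Delta_g f=-b$ yields $||\overset{\circ}{D^2f}||_{L^2}^2=\tfrac{n-1}{n}||b||_{L^2}^2-\int_M\mathrm{Ric}_g(\nabla f,\nabla f)\mu_g$. For part II the hypothesis $\mathrm{Ric}_g\geq 0$ makes the last term non-positive, so $||\overset{\circ}{D^2f}||_{L^2}^2\leq\tfrac{n-1}{n}||b||_{L^2}^2$; inserting this into the Cauchy--Schwarz bound and cancelling one factor of $||b||_{L^2}$ gives precisely $||b||_{L^2}\leq\sqrt{n(n-1)}\,||\overset{\circ}{B}||_{L^2}$, which is (\ref{AS.01}).

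For part I the Ricci term no longer has a sign, and I would estimate it by $\big|\int_M\mathrm{Ric}_g(\nabla f,\nabla f)\mu_g\big|\leq C_1||\mathrm{Ric}_g||_{C^0_2}\int_M(1+d^2)^{-1}|\nabla f|^2\mu_g$, using the uniform equivalence of the $g$- and $e$-norms and the definition of the $C^0_2$ norm, whose finiteness follows from $\delta>-\tfrac{n}{2}$ via Lemma \ref{sobolevembedding}. The decisive ingredient, and the step I expect to be the main obstacle, is then a Hardy--Rellich-type inequality $\int_M(1+d^2)^{-1}|\nabla f|^2\mu_g\leq \widetilde{C}\int_M(\Delta_g f)^2\mu_g=\widetilde{C}||b||_{L^2}^2$ with $\widetilde{C}$ depending only on $(M,g)$ and not on $B$; equivalently, the boundedness of $\nabla\Delta_g^{-1}$ from $L^2$ into the weighted space $L^2_{-1}$. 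This step is genuinely unavoidable: the condition $\rho\geq 0$ only secures $\nabla f\in L^2_{-1}$ and not $\nabla f\in L^2$, so the weight $(1+d^2)^{-1}$ cannot be discarded, and it is precisely here that the requirement that both $\mathrm{Ric}_g$ and $B$ sit in $L^2$ (hence the dimensional split in the weight hypotheses) enters. I would prove this inequality by reducing to the euclidean ends, where the classical Hardy--Rellich inequality applies, and patching with interior elliptic estimates on the compact core through a partition of unity. Granting it, $||\overset{\circ}{D^2f}||_{L^2}^2\leq\big(\tfrac{n-1}{n}+C||\mathrm{Ric}_g||_{C^0_2}\big)||b||_{L^2}^2$, and the Cauchy--Schwarz bound yields (\ref{AS}) after cancelling $||b||_{L^2}$.
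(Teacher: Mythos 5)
Your proposal follows the paper's own strategy almost step for step: solve $\Delta_g f=\pm b$ through the weighted isomorphism of \cite{CB1} (with the same bookkeeping tying $\rho$ to the solvability window), verify that $X=\nabla f$ satisfies the hypotheses of Theorem \ref{thm1}, reduce the Pohozaev--Schoen identity to $\tfrac1n\|b\|^2_{L^2}=\big\vert\int_M\langle\overset{\circ}{B},\nabla^2f-\tfrac1n g\,\Delta_gf\rangle\mu_g\big\vert$, apply Cauchy--Schwarz, and control the trace-free Hessian via the integrated Bochner formula. Your Part II is exactly the paper's argument and is correct, and deferring the integration-by-parts justifications to approximation arguments is also what the paper does (in more detail).

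The genuine gap is in Part I, at the step you yourself single out. The paper needs no Hardy--Rellich inequality there: using $\rho\geq 0$ it estimates
\begin{align*}
\int_M\frac{|\nabla f|^2_g}{1+d^2}\,\mu_g\;\leq\; \|\nabla f\|^2_{L^2_{\rho-1}}\;\leq\; \|f\|^2_{H_{2,\rho-2}}\;\leq\; C\,\|\Delta_gf\|^2_{H_{0,\rho}},
\end{align*}
where the last inequality is the a priori estimate that holds because $\Delta_g:H_{2,\rho-2}\to H_{0,\rho}$ is injective for $\rho-2>-\tfrac n2$; this estimate is \emph{quoted} from \cite{CB1}, not reproved. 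Your substitute --- boundedness of $\nabla\Delta_g^{-1}$ from $L^2$ to $L^2_{-1}$ --- is asserted rather than proved, and the sketched proof would not close as described. First, for $n=3,4$ the operator $\Delta_g^{-1}$ is not defined on $L^2$ at all: target weight $0$ lies outside the isomorphism range $(2-\tfrac n2,\tfrac n2)$, which is precisely the reason the hypothesis $\rho>-\tfrac n2+2$ appears; the claim must be restated as the inequality $\|\nabla f\|_{L^2_{-1}}\leq C\|\Delta_g f\|_{L^2}$ for $f\in H_{2,\rho-2}$. Second, and more seriously, patching euclidean Hardy--Rellich on the ends with interior elliptic estimates on the compact core necessarily leaves an uncontrolled zeroth-order remainder $\|f\|_{L^2(K)}$, which no partition-of-unity argument can remove. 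Indeed, your inequality \emph{implies} injectivity of $\Delta_g$ on $H_{2,\rho-2}$: a nontrivial kernel element is nonconstant (constants are excluded from this space since $\rho>2-\tfrac n2$), hence has nonvanishing gradient, violating the inequality. So any proof must invoke injectivity together with a quantitative compactness/contradiction argument --- which is exactly the content of the weighted elliptic estimate of \cite{CB1} that you are trying to bypass, and which is purely global, not obtainable from local estimates alone.

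The efficient repair is to replace your Hardy--Rellich lemma by the displayed chain above. One caveat makes your attempt worth recording, though: the paper's final comparison of $\|\Delta_g f\|_{H_{0,\rho}}$ with $\|b\|_{L^2}$ appeals to the embedding $L^2_\rho\subset L^2$, which for $\rho>0$ gives the inequality in the opposite direction; so in dimensions $3$ and $4$ (where $\rho>0$ is forced) obtaining the literal statement \eqref{AS} with $\|b\|_{L^2}$ on the left requires either keeping the weighted norm $\|b\|_{L^2_\rho}$ in the constant's definition or proving precisely the stronger estimate you propose. That makes a rigorous proof of your lemma --- via injectivity plus compactness, not patching alone --- a genuinely useful addition rather than a detour.
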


\begin{proof}
Under the above hypotheses, since $\delta>-\frac{n}{2}$, we have that $b\in H_{s+1,\rho}$. Now, assume that $\rho<\frac{n}{2}$. If actually $\rho\geq \frac{n}{2}$, then we would have a continuous embedding $H_{s+1,\rho}\hookrightarrow H_{s+1,\sigma}$, for any $\sigma<\frac{n}{2}$. Thus, this assumption posses no actual restrictions, and from now on, so as to avoid introducing one further weighting parameter, we will assume it. Also, from the hypotheses bounding $\rho$ from below, we have that $-\frac{n}{2}+2<\rho<\frac{n}{2}$, for $n=3,4$; and $0\leq\rho<\frac{n}{2}$, for $n\geq 5$. Notice that these conditions can always be satisfied, and that, as stated before, we are not posing any further restrictions. 

Now, consider $\Delta_g:H_{s+3,\rho-2}\mapsto H_{s+1,\rho}$, and consider the equation
\begin{align}\label{AS1}
\Delta_gf=b.
\end{align}
Since $-\frac{n}{2}<\rho-2<\frac{n}{2}-2$ for any $n\geq 3$, then, from the discussion preceding the theorem, we know that there is a solution $f\in H_{s+3,\rho-2}$. Hence, we get the following:
\begin{align}\label{AS2}
\begin{split}
\int_M b^2\mu_g&=\int_Mb\Delta_gf\mu_g,\\
&=-\int_M\nabla f(b)\mu_g + \int_M\mathrm{div}_g(b\nabla f)\mu_g.
\end{split}
\end{align} 
Now, to see that the last term actually does not produce any contribution, the procedure is standard: Consider sequences $\{b_n\}\subset C^{\infty}_0$ of compactly supported functions converging to $b$ in $H_{s+1,\rho}$ and $\{\nabla f_n\}\subset C^{\infty}_0$ converging to $\nabla f$ in $H_{s+2,\rho-1}$. Then, consider the difference
\begin{align}\label{AS3}
\begin{split}
\Big\vert\int_M\{\mathrm{div}_g(b\nabla f)-\mathrm{div}_g(b_n\nabla f_n)\}\mu_g\Big\vert\leq &\int_M|\mathrm{div}_g(b(\nabla f-\nabla f_n)|\mu_g \\
&+ \int_M|\mathrm{div}_g((b-b_n)\nabla f_n)|\mu_g.
\end{split}
\end{align}
We also have that $\mathrm{div}_g(b(\nabla f-\nabla f_n))=\langle\nabla b,\nabla f-\nabla f_n\rangle + b\:\mathrm{div}_g(\nabla f-\nabla f_n)$, which gives point wise estimates of the form:
\begin{align*}
|\mathrm{div}_g(b(\nabla f-\nabla f_n))|\lesssim |\nabla f-\nabla f_n|_g|\nabla b|_g + |b||\nabla(\nabla f-\nabla f_n)|_g.
\end{align*}
Notice that, since $\rho\geq 0$, then $H_{s+3,\rho-1}\hookrightarrow L^2_{-1}$ and $H_{s,\rho+1}\hookrightarrow L^2_{1}$. Thus, after integration, we get the following estimate:
\begin{align*}
\int_M|\mathrm{div}_g(b(\nabla f-\nabla f_n)|\mu_g&\lesssim ||\nabla f-\nabla f_n||_{L^2_{-1}}||\nabla b||_{L^2_1} \\
&+ ||b||_{L^2}|||\nabla(\nabla f - \nabla f_n)||_{L^2}\xrightarrow[n\rightarrow\infty]{} 0.
\end{align*}
A similar argument can be applied to the second term in the right-hand side of (\ref{AS3}), showing that
\begin{align*}
\int_M\mathrm{div}_g(b\nabla f)\mu_g = \lim_{n\rightarrow\infty} \int_M\mathrm{div}_g(b_n\nabla f_n)\mu_g=0.
\end{align*} 
Thus, going back to (\ref{AS2}), we get the following:
\begin{align}\label{AS4}
\begin{split}
\int_M b^2\mu_g&=-\int_M\nabla f(b)\mu_g,\\
&=-\frac{n}{2}\int_M\langle \overset{\circ}{B},\pounds_{\nabla f}g \rangle\mu_g,\\
&=-n\int_M\langle \overset{\circ}{B},\nabla^2f \rangle\mu_g,\\
&=-n\int_M\langle \overset{\circ}{B},\nabla^2f - \frac{1}{n}g\Delta_gf \rangle\mu_g
\end{split}
\end{align}
where in the second line we have used (\ref{PS}). Clearly, since the left hand side is non-negative, then we have that
\begin{align}\label{AS5}
\begin{split}
0\leq -n\int_M\langle \overset{\circ}{B},\nabla^2f - \frac{1}{n}g\Delta_gf \rangle\mu_g &= n\Big\vert\int_M\langle \overset{\circ}{B},\nabla^2f - \frac{1}{n}g\Delta_gf \rangle\mu_g\Big\vert,\\
&\leq n\int_M |\overset{\circ}{B}|_g |\nabla^2f - \frac{1}{n}g\Delta_gf|_g \mu_g,\\
&\leq n||\overset{\circ}{B}||_{L^2}||\nabla^2f - \frac{1}{n}g\Delta_gf||_{L^2}.
\end{split}
\end{align}
Notice that due to the choice of our functional spaces the right-hand side is well-defined, since both $|\overset{\circ}{B}|_g,|\nabla^2f|_g\in L^2$. Also, since $H_{s+3,\rho-2}\subset C^{3}$, we get the usual Ricci identity
\begin{align*}
{\mathrm{Ric}_g}_{uj}\nabla^uf=\nabla_i\nabla_j\nabla^if - \nabla_j(\Delta_gf),
\end{align*}
which implies 
\begin{align}\label{ricciid}
{\mathrm{Ric}_g}(\nabla f,\nabla f)=\langle \nabla f,\mathrm{div}_g\left(\nabla^2 f\right)\rangle_g - \langle \nabla f,\nabla(\Delta_gf)\rangle_g.
\end{align}
Since $\nabla f\in H_{s+2,\rho-1}$ and $\mathrm{Ric}_g\in H_{s+1,\delta+2}$, with $\delta> -\frac{n}{2}$ and $\rho\geq 0$, then ${\mathrm{Ric}_g}(\nabla f,\nabla f)\in H_{s+1,\sigma}$ for any $\sigma< \delta + \rho + \frac{n}{2}$, which implies ${\mathrm{Ric}_g}(\nabla f,\nabla f)\in L^2$. Furthermore, since $\nabla^2f\in H_{s+1,\rho}$, then both $\nabla^3f$ and $\nabla(\Delta_gf)$ lie in $H_{s,\rho+1}\hookrightarrow L^{2}_1$. Thus, since $\nabla f\in L^2_{-1}$, then, both terms in the right hand side of (\ref{ricciid}) are actually in $L^1$, since 
\begin{align*}
\Big\vert \int_M\langle \nabla f,\mathrm{div}_g\nabla^2 f\rangle_g\mu_g\Big\vert&\leq \int_M |\nabla f| |\mathrm{div}_g\nabla^2 f| \mu_g \lesssim ||\nabla f||_{L^2_{-1}}||\nabla^3f||_{L^2_{1}}<\infty,\\
\Big\vert \int_M\langle \nabla f,\nabla(\Delta_gf)\rangle_g\mu_g\Big\vert&\lesssim ||\nabla f||_{L^2_{-1}}||\nabla (\Delta_gf)||_{L^2_{1}}<\infty.
\end{align*}
This implies that we can integrate (\ref{ricciid}) over $M$. Then, we can integrate by parts, so as to get
\begin{align*}
\int_M\langle \nabla f,\mathrm{div}_g\nabla^2 f\rangle_g\mu_g= - \int_M|\nabla^2f|^2_g\mu_g + \int_M\mathrm{div}_g\left(i_{\nabla f}\nabla^2f \right)\mu_g.
\end{align*}
Notice that from the above estimates, approximating $f$ in $H_{s+3,\rho-2}$ by $\{f_n \}_{n=1}^{\infty}\subset C^{\infty}_0$, we get that
\begin{align*}
\Big\vert \int_M\left(\langle \nabla f,\mathrm{div}_g\nabla^2 f \rangle_g - \langle \nabla f_n,\mathrm{div}_g\nabla^2 f_n\rangle_g\right)\mu_g\Big\vert&\lesssim ||\nabla (f-f_n)||_{L^2_{-1}}||\nabla^3f||_{L^2_{1}} \\
&+ ||\nabla f_n||_{L^2_{-1}}||\nabla^3(f-f_n)||_{L^2_{1}}\xrightarrow[n\rightarrow\infty]{}0,
\end{align*}
which implies that
\begin{align*}
\int_M\langle \nabla f,\mathrm{div}_g\nabla^2 f \rangle_g\mu_g=-\lim_{n\rightarrow\infty}\int_M|\nabla^2f_n|^2_g\mu_g=-\int_M|\nabla^2f|^2_g\mu_g,
\end{align*}
where the last equality comes from the same kind of approximation argument, plus the fact that $\nabla^2f\in L^2$. Following the lines as described above, we also get that
\begin{align*}
\int_M\langle \nabla f,\nabla(\Delta_gf)\rangle_g\mu_g=-\lim_{n\rightarrow\infty}\int_{M}|\Delta_gf_n|^2\mu_g=-\int_{M}|\Delta_gf|^2\mu_g.
\end{align*}
Thus, finally, we get that the following expression, familiar for compact manifolds, holds under our hypotheses
\begin{align}
\int_M{\mathrm{Ric}_g}(\nabla f,\nabla f)\mu_g= \int_{M}|\Delta_gf|^2\mu_g - \int_M|\nabla^2f|^2_g\mu_g.
\end{align}
Noticing that $|\nabla^2f-\frac{1}{n}g\Delta_gf|^2_g=|\nabla^2f|^2_g - \frac{1}{n} |\Delta_gf|^2$, we get
\begin{align*}
||\nabla^2f-\frac{1}{n}g\Delta_gf||^2_{L^2}&=||\nabla^2f||^2_{L^2} - \frac{1}{n} ||\Delta_gf||^2_{L^2}\\
&=\frac{n-1}{n} ||\Delta_gf||^2_{L^2} - \int_M{\mathrm{Ric}_g}(\nabla f,\nabla f)\mu_g.
\end{align*}
Thus, notice that, parallel to what happens in the compact scenario, if $\mathrm{Ric}_g$ is non-negative, then we can get an estimate of the $L^2$-norm of $\nabla^2f-\frac{1}{n}g\Delta_gf$ in terms of the $L^2$ norm of $\Delta_gf$. Doing this, we can prove the second statement in the Lemma. That is, if $\mathrm{Ric}_g$ is non-negative, then putting the above identity together with (\ref{AS4})-(\ref{AS5}), we get
\begin{align}
||b||^2_{L^2}\leq \sqrt{n(n-1)}||\overset{\circ}B||_{L^2}||b||_{L^2}.
\end{align}

Nevertheless, we can actually get rid of this limitation on the Ricci tensor with the following argument. Notice that, from the point wise continuity of contractions, we get
\begin{align*}
\Big\vert \int_M\mathrm{Ric}_g(\nabla f,\nabla f)\mu_g \Big\vert \lesssim \int_{M}|\mathrm{Ric}_g|_g|\nabla f|^2_g\mu_g. 
\end{align*}
Also, since $\mathrm{Ric}_g\in H_{s+1,\delta+2}$, with $s>\frac{n}{2}$ and $\delta>-\frac{n}{2}$, then $H_{s+1,\delta+2}\hookrightarrow C^0_{2}$, where 
\begin{align*}
||\mathrm{Ric}_g||_{C^0_2}=\sup_{M}|\mathrm{Ric}_g|_g(1+d^2_e),
\end{align*}
thus, we get
\begin{align*}
\Big\vert \int_M\mathrm{Ric}_g(\nabla f,\nabla f)\mu_g \Big\vert &\lesssim ||\mathrm{Ric}_g||_{C^0_2}\int_M|\nabla f|^2_g(1+d^2)^{-1}\mu_g\\
&=||\mathrm{Ric}_g||_{C^0_2}\int_M|\nabla f|^2_g(1+d^2)^{\rho-1}(1+d^2)^{-\rho}\mu_g\\
&\leq ||\mathrm{Ric}_g||_{C^0_2}||\nabla f||^2_{L^2_{\rho-1}}.
\end{align*}
Notice, again, that since $f\in H_{s+3,\rho-2}$, with $\rho\geq 0$, then $\nabla f\in L^2_{\rho-1}$. Thus, the above expression is well-defined. We then get the following estimate, where $C_n$ stands for some constant depending only on $n$,
\begin{align*}
||\nabla^2f-\frac{1}{n}g\Delta_gf||^2_{L^2}&\leq \frac{n-1}{n} ||\Delta_gf||^2_{L^2} + C_{n}||\mathrm{Ric}_g||_{C^0_2}||\nabla f||^2_{L^2_{\rho-1}},\\
&\leq \frac{n-1}{n} ||\Delta_gf||^2_{L^2} + C_n||\mathrm{Ric}_g||_{C^0_2}||f||^2_{H_{2,\rho-2}}
\end{align*}
Now, notice that $\rho-2>-\frac{n}{2}$, and since $\Delta_g:H_{2,\rho-2}\mapsto H_{0,\rho}$ is injective for $\rho-2>-\frac{n}{2}$, we have the following elliptic estimate (see \cite{CB1}):
\begin{align*}
||f||_{H_{2,\rho-2}}\leq C ||\Delta_gf||_{H_{0,\rho}}\leq C ||b||_{L^2},
\end{align*}
for some constant $C$, which does not depend on the particular $f\in H_{2,\rho-2}$, where the last inequality is a consequence of the embedding $L^2_{\rho}\subset L^2$ for $\rho\geq 0$. This implies that
\begin{align*}
||\nabla^2f-\frac{1}{n}g\Delta_gf||_{L^2}&\leq \left( \frac{n-1}{n} + C||\mathrm{Ric}_g||_{C^0_2}\right)^{\frac{1}{2}}||b||_{L^2},
\end{align*}
For some other constant $C$. Putting this together with (\ref{AS4})-(\ref{AS5}), we get
\begin{align*}
||b||^2_{L^2}\leq n\left( \frac{n-1}{n} + C||\mathrm{Ric}_g||_{C^0_2}\right)^{\frac{1}{2}}||\overset{\circ}{B}||_{L^2}||b||_{L^2} , 
\end{align*}
\end{proof}

\begin{coro}
Under the same hypotheses of the previous lemma, if $n\geq 5$, then (\ref{AS.01}) can be simplified by the following estimate
\begin{align}
||b||^2_{L^2}\leq nC_g ||\overset{\circ}B||_{L^2},
\end{align}
where the constant $C_g$ is the best constant satisfying the elliptic estimate $||h||_{H_{2,-2}}\leq C_g||\Delta_gh||$ for all $h\in H_{2,-2}$.
\end{coro}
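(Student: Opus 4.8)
The plan is to re-run the argument in the proof of Theorem \ref{B-AS} with the specific weight $\rho=0$, which is admissible precisely because $n\geq 5$, and then to estimate the traceless Hessian crudely so that the only constant surviving is the sharp elliptic constant $C_g$. First I would check that for $n\geq 5$ the choice $\rho=0$ meets all the hypotheses of Theorem \ref{B-AS}, since $0\geq 0$ and $0>-\frac{n}{2}+2$. With this choice the auxiliary function $f$ solving $\Delta_gf=b$ as in (\ref{AS1}) lies in $H_{s+3,-2}$, and the relevant mapping is $\Delta_g\colon H_{2,-2}\mapsto H_{0,0}=L^2$. This is an isomorphism exactly in the weight range $-\frac{n}{2}<-2<\frac{n}{2}-2$, and the left inequality $-\frac{n}{2}<-2$ holds if and only if $n\geq 5$; this is the single point where the dimensional restriction enters.

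Next I would invoke the elliptic estimate attached to this isomorphism, now with a trivial target weight, namely $||f||_{H_{2,-2}}\leq C_g||\Delta_gf||_{L^2}=C_g||b||_{L^2}$, where $C_g$ is by definition the smallest constant for which the estimate holds, i.e. the operator norm of $\Delta_g^{-1}\colon L^2\mapsto H_{2,-2}$. The gain over the general case is that, since the target weight vanishes, no auxiliary embedding $L^2_\rho\subset L^2$ is required and the constant appearing is genuinely the sharp elliptic constant, rather than the composite constant of Theorem \ref{B-AS}.

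The remaining steps reproduce (\ref{AS4})--(\ref{AS5}) verbatim, which give $||b||^2_{L^2}\leq n||\overset{\circ}{B}||_{L^2}||\nabla^2f-\frac{1}{n}g\Delta_gf||_{L^2}$. Rather than expand the traceless Hessian through the Ricci identity (the route that produced the sharp constant $\sqrt{n(n-1)}$ of (\ref{AS.01})), I would bound it directly using $|\nabla^2f-\frac{1}{n}g\Delta_gf|^2_g=|\nabla^2f|^2_g-\frac{1}{n}|\Delta_gf|^2\leq|\nabla^2f|^2_g$, together with the fact that the $H_{2,-2}$-norm controls $||\nabla^2f||_{L^2}$ exactly (the weight on second derivatives being $(1+d^2_e)^0$). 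This yields $||\nabla^2f-\frac{1}{n}g\Delta_gf||_{L^2}\leq||\nabla^2f||_{L^2}\leq||f||_{H_{2,-2}}\leq C_g||b||_{L^2}$, and substituting back produces $||b||^2_{L^2}\leq nC_g||\overset{\circ}{B}||_{L^2}||b||_{L^2}$; dividing by $||b||_{L^2}$ (the case $b\equiv 0$ being trivial) gives the claim. A pleasant feature is that this crude step sidesteps the Ricci analysis altogether.

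The main obstacle, and indeed the only substantive point, is the justification that $\rho=0$ is admissible and that the sharp constant $C_g$ is finite and well-defined; both rest on the isomorphism property of $\Delta_g$ on $H_{2,-2}\mapsto L^2$, valid only in the weight window $(-\frac{n}{2},\frac{n}{2}-2)$, which forces $n\geq 5$. For $n=3,4$ one is compelled to take $\rho>0$, reintroducing a nontrivial target weight and the extra embedding constant, so the clean form with the bare elliptic constant $C_g$ is genuinely special to the regime $n\geq 5$.
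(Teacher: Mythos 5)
Your proposal is correct and follows essentially the same route as the paper's proof: restrict to $\rho=0$ (admissible exactly when $n\geq 5$), solve $\Delta_g f=b$ via the isomorphism $\Delta_g\colon H_{s+3,-2}\to H_{s+1,0}$, bound $||b||^2_{L^2}\leq n||\overset{\circ}{B}||_{L^2}||\nabla^2 f||_{L^2}$, and conclude with the injectivity-based elliptic estimate $||f||_{H_{2,-2}}\leq C_g||b||_{L^2}$ together with the observation that the second-derivative term of the $H_{2,-2}$-norm is unweighted. The only cosmetic difference is that you bound the traceless Hessian pointwise by $|\nabla^2 f|_g$, whereas the paper reads the same bound off the intermediate line of (\ref{AS4}); these are equivalent.
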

\begin{proof}
Under these assumptions we can look at $b\in H_{s+1,0}$, and consider the map $\Delta_g:H_{s+3,-2}\mapsto H_{s+1,0}$, which will give us a solution $f\in H_{s+3,-2}$ to (\ref{AS1}). Then, from (\ref{AS4})-(\ref{AS5}), we can actually deduce that
\begin{align*}
||b||^2_{L^2}\leq n ||\overset{\circ}B||_{L^2}||\nabla^2f||_{L^2}.
\end{align*}
Thus, since $||\nabla^2f||_{L^2}\leq ||\nabla^2f||_{H_{2,-2}}$, and, again, for $n\geq 5$ we have that $\Delta_g$ is injective on $H_{2,-2}$, we get the elliptic estimate  $||\nabla^2f||_{H_{2,-2}}\leq C_g||b||_{L^2}$. In this case, we would directly get
\begin{align}
||b||^2_{L^2}\leq nC_g ||\overset{\circ}B||_{L^2}||b||_{L^2},
\end{align}
which is a simpler estimate than the one presented in the main proof.
\end{proof}

From this general theorem, we get a version of the usual almost-Schur inequality for the scalar-curvature \cite{Topping}-\cite{Barbosa} in the context of AE manifolds. In our case, we are able to state such inequality without restriction on the Ricci tensor, but we do not provide an explicit value for the constant $C$. 

\begin{coro}\label{R-AS}
Let $(M^n,g_0)$ is an $H_{s+3,\delta}$-asymptotically euclidean manifold without boundary, with $n\geq 3$, $\delta>-\frac{n}{2}$ and $\delta\geq-2$ and $s>\frac{n}{2}$. Then, there is a constant $C$, such that for any $H_{s+3,\delta}$-AE metric $g$ on $M$ the following inequality holds
\begin{align}
||R_g||_{L^2}\leq C ||\overset{\circ}{\mathrm{Ric}_g}||_{L^2}
\end{align}
\end{coro}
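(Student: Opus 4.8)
The plan is to derive this scalar-curvature inequality directly from the general almost-Schur inequality of Theorem \ref{B-AS}, applied to one carefully chosen conserved tensor. The natural candidate is the Einstein tensor $G_g\doteq \mathrm{Ric}_g-\frac{1}{2}R_gg$, which is tailor-made both to satisfy the divergence-free hypothesis and to couple the scalar curvature to the traceless Ricci tensor. So the first step is to fix a metric $g$, set $B=G_g$ (with the underlying metric in Theorem \ref{B-AS} taken to be $g$ itself), and check that $B$ meets every requirement there.

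Verifying the hypotheses is the bookkeeping step that needs care, but it matches up cleanly. The contracted second Bianchi identity gives $\mathrm{div}_gG_g=\mathrm{div}_g\mathrm{Ric}_g-\frac{1}{2}\nabla R_g=0$, so $B$ is $g$-divergence-free. For the regularity, since $g-e\in H_{s+3,\delta}$ one has $\mathrm{Ric}_g\in H_{s+1,\delta+2}$ (as already used in the proof of Theorem \ref{B-AS}), and the multiplication Lemma \ref{sobolevmultiplication} together with the boundedness of $g$ yields $R_gg\in H_{s+1,\delta+2}$ as well; hence $B=G_g\in H_{s+1,\delta+2}$, i.e.\ we take $\rho=\delta+2$. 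The weight constraints of Theorem \ref{B-AS} then translate \emph{exactly} into the hypotheses of the corollary: $\rho\geq 0\Leftrightarrow\delta\geq -2$ and $\rho>-\frac{n}{2}+2\Leftrightarrow\delta>-\frac{n}{2}$. In particular $\mathrm{Ric}_g\in H_{s+1,\delta+2}\hookrightarrow C^0_2$, so the term $||\mathrm{Ric}_g||_{C^0_2}$ appearing in Theorem \ref{B-AS} is finite.

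The second step is the algebraic identity relating $G_g$ to the curvature quantities we care about. Taking the trace, $b=\mathrm{tr}_gG_g=\frac{2-n}{2}R_g$, so that $||b||_{L^2}=\frac{n-2}{2}||R_g||_{L^2}$. Because $G_g$ and $\mathrm{Ric}_g$ differ only by a multiple of $g$, their traceless parts coincide, $\overset{\circ}{G_g}=\overset{\circ}{\mathrm{Ric}_g}$, whence $||\overset{\circ}{B}||_{L^2}=||\overset{\circ}{\mathrm{Ric}_g}||_{L^2}$. Substituting these into inequality (\ref{AS}) of Theorem \ref{B-AS} and dividing by $\frac{n-2}{2}>0$ (this is where $n\geq 3$ enters) gives
\begin{align*}
||R_g||_{L^2}\leq \frac{2n}{n-2}\left(\frac{n-1}{n}+C||\mathrm{Ric}_g||_{C^0_2}\right)^{\frac{1}{2}}||\overset{\circ}{\mathrm{Ric}_g}||_{L^2},
\end{align*}
which is the desired inequality with the prefactor on the right playing the role of the constant.

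The only point demanding honesty is the nature of that constant. Theorem \ref{B-AS} produces a constant that is independent of the chosen tensor $B$, but it still depends on the metric $g$, both through the elliptic constant for $\Delta_g$ and through $||\mathrm{Ric}_g||_{C^0_2}$; consequently the constant above is a function of $g$ rather than a single number valid uniformly over all AE metrics on $M$. This is consistent with the remark preceding the statement that $C$ is not made explicit, and it is not a genuine obstruction — merely a dependence to be recorded. Everything else is a direct specialization of Theorem \ref{B-AS} to $B=G_g$.
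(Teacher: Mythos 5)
Your proposal is correct and follows essentially the same route as the paper: apply Theorem \ref{B-AS} to the Einstein tensor $B=G_g$ (divergence-free by the contracted Bianchi identity, lying in $H_{s+1,\delta+2}$ so that $\rho=\delta+2$ meets the weight hypotheses), then use $\mathrm{tr}_gG_g=\frac{2-n}{2}R_g$ and $\overset{\circ}{G_g}=\overset{\circ}{\mathrm{Ric}_g}$ to convert the conclusion into the stated inequality. Your explicit bookkeeping of the weights and of the factor $\frac{n-2}{2}$, as well as your remark that the constant depends on $g$ through $||\mathrm{Ric}_g||_{C^0_2}$ and the elliptic estimate, only makes explicit what the paper leaves implicit.
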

\begin{proof}
Notice that since $g$ is $H_{s+3,\delta}$-AE, with $\delta>-\frac{n}{2}$ and $\delta\geq -2$, then we have $\mathrm{Ric}_g,R_g\in H_{s+1,\rho}$, for $\rho>-\frac{n}{2}+2$ and $\rho\geq 0$, which also implies $G_g\in H_{s+1,\rho}$ for the same $\rho$. Hence, since $\mathrm{div}_gG_g=0$, we get as an immediate corollary of the above Lemma that
\begin{align*}
||R_g||_{L^2}\leq C ||\overset{\circ}{\mathrm{G}_g}||_{L^2}.
\end{align*}
Finally, since $\overset{\circ}{\mathrm{G}_g}=\overset{\circ}{\mathrm{Ric}_g}$ our claim holds.
\end{proof}

Clearly, the following also holds as a direct consequence of the above Lemma.
\begin{coro}
Let $(M^n,g)$ is an $H_{s+3,\delta}$-asymptotically euclidean manifold without boundary, with $n\geq 3$, $\delta> - \frac{n}{2}$, $\delta\geq -2$ and $s>\frac{n}{2}$. Suppose that there is an isometric immersion $(M^n,g)\hookrightarrow (\tilde{M}^n,\tilde{g})$ into a Ricci-flat space, \textit{i.e}, $\mathrm{Ric}_{\tilde{g}}=0$. Then, if the extrinsic curvature $K\in H_{s+1,\rho}$ for $\rho>-\frac{n}{2}+2$ and $\rho\geq 0$, then, there is a constant $C$, independent of the immersion, such that
\begin{align}\label{AS-K}
||\tau||_{L^2}\leq C ||\overset{\circ}{K}||_{L^2},
\end{align} 
where $\tau\doteq\mathrm{tr}_gK$.
\end{coro}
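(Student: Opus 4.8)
The plan is to deduce this inequality from Theorem~\ref{B-AS} by applying it to a single divergence-free tensor manufactured from the extrinsic curvature, in complete parallel with the argument for Corollary~\ref{R-AS}. The natural candidate is the momentum-type tensor $\pi \doteq K - \tau g$ already introduced in (\ref{piconservation}). First I would recall that, precisely because the ambient space is Ricci-flat, the Codazzi equation forces $\mathrm{div}_g\pi = 0$, which is exactly the content of (\ref{piconservation}); this is the only place where Ricci-flatness of $\tilde g$ is used. So $\pi$ is a symmetric, $g$-divergence-free $(0,2)$-tensor, and the whole problem becomes one of checking that $\pi$ is admissible in Theorem~\ref{B-AS} and then reading off its trace and traceless part.

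Next I would verify the functional-space hypotheses. By assumption $K \in H_{s+1,\rho}$ with $\rho > -\frac{n}{2}+2$ and $\rho \geq 0$. Writing $g = e + h$ with $h \in H_{s+3,\delta}$ and $\delta > -\frac{n}{2}$, the trace $\tau = g^{ij}K_{ij}$ lies in $H_{s+1,\rho}$, since the inverse metric is uniformly bounded on an AE manifold; then the multiplication property of Lemma~\ref{sobolevmultiplication}, together with $\delta + \frac{n}{2} > 0$, gives $\tau g = \tau e + \tau h \in H_{s+1,\rho}$. Hence $\pi = K - \tau g \in H_{s+1,\rho}$ with the same weight $\rho$, so that $\pi$ satisfies all the hypotheses imposed on the tensor $B$ in Theorem~\ref{B-AS}.

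The last step is a purely algebraic identification. A direct computation gives $\mathrm{tr}_g\pi = \tau - n\tau = (1-n)\tau$, while for the traceless part
\begin{align*}
\overset{\circ}{\pi} = \pi - \tfrac{1}{n}(\mathrm{tr}_g\pi)\,g = K - \tau g - \tfrac{1-n}{n}\tau g = K - \tfrac{1}{n}\tau g = \overset{\circ}{K}.
\end{align*}
Applying part I of Theorem~\ref{B-AS} to $B = \pi$ then yields $\|(1-n)\tau\|_{L^2} \leq C\,\|\overset{\circ}{K}\|_{L^2}$, and absorbing the factor $(n-1)$ into the constant gives (\ref{AS-K}); since the constant produced by Theorem~\ref{B-AS} depends only on $(M,g)$ and not on the particular divergence-free tensor chosen, it is in particular independent of the immersion. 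There is no serious analytic obstacle here — the statement is genuinely a corollary — and the only two points demanding care are the vanishing of $\mathrm{div}_g\pi$, which rests entirely on the Ricci-flatness hypothesis through Codazzi, and the bookkeeping showing that multiplying by $\tau$ does not degrade the weight $\rho$, which is settled by the embedding and multiplication lemmas of Section~2.
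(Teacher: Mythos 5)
Your proposal is correct and follows essentially the same route as the paper: both take the momentum-type tensor $\pi = K - \tau g$, invoke the Codazzi equation with the Ricci-flat ambient space to get $\mathrm{div}_g\pi = 0$, observe that $\overset{\circ}{\pi} = \overset{\circ}{K}$, and then apply Theorem \ref{B-AS}. Your write-up is in fact more careful than the paper's (which is three lines long): you track the weight $\rho$ correctly rather than asserting $\pi \in H_{s+1,0}$, and you make explicit the trace identity $\mathrm{tr}_g\pi = (1-n)\tau$ that lets the factor $n-1$ be absorbed into the constant.
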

\begin{proof}
As we have already commented, under our hypotheses, we have that the Codazzi equation for hypersurfaces guarantees that the tensor $\pi=K - \tau g\in H_{s+1,0}$ and satisfies $\mathrm{div}_g\pi=0$. Furthermore, we have that $\overset{\circ}{\pi}=\overset{\circ}{K}$. Thus (\ref{AS}) proves (\ref{AS-K}).
\end{proof}

Finally, we will present an almost-Schur-type inequality for $Q$-curvature on AE-manifolds. Following \cite{Topping}-\cite{Barbosa}, it has been shown in \cite{chinos} that such an inequality holds in the context of closed manifolds.  
It should be stress that problems related with $Q$-curvature have gained plenty of attention, see \cite{Qcurv} for a survey. Before presenting this result, we should introduce some definitions. First of all, given a Riemannian manifold $(M^n,g)$, with $n\geq 3$, we will define its $Q$-curvature in the following way
\begin{align}\label{Qcurv}
Q_g\doteq A_n\Delta_gR_g + B_n|\mathrm{Ric}_g|^2_g + C_nR^2_g,
\end{align}
where $A_n=-\frac{1}{2(n-1)}$, $B_n=-\frac{2}{(n-2)^2}$ and $C_n=\frac{n^2(n-4)+16(n-1)}{8(n-1)^2(n-2)^2}$. 

Now, following \cite{chinos}, we will explain how we can canonically associate to $Q$-curvature a divergence-free $(0,2)$-symmetric tensor field.
Notice that we can view $Q$-curvature as a map on the set of Riemannian metrics on a given manifold $M$, given by
\begin{align*}
Q:\mathrm{Riem}^{k}(M)&\mapsto C^{k-4}(M),\\
g &\mapsto Q_g,
\end{align*} 
where $\mathrm{Riem}^{k}(M)$ denotes the set of $C^k$ Riemannian metrics on $M$ and we are considering $k\geq 4$. Then, we can analyse its linearisation $L_g\doteq D_gQ:S^2M\mapsto C^{k-4}(M)$ which is given by an operator acting on the space of $C^k$-symmetric tensor fields on $M$. Let $L^{*}_g$ be its formal adjoint operator, which acts on functions on $M$ and produces a symmetric tensor-field on $M$. Now, consider any point $p\in M$ and a bounded neighbourhood $U$ of $p$. Furthermore, let $V\subset U$ be another neighbourhood of $p$. Let $f\in C^{\infty}_{0}(U)$ and $X$ be a compactly supported smooth vector field on $U$. Then, we have that
\begin{align*}
\int_U\langle X,\mathrm{div}_g(L^{*}_gf) \rangle \mu_g = -\frac{1}{2}\int_U\langle \pounds_Xg,L^{*}_gf \rangle \mu_g=-\frac{1}{2}\int_U f L_g(\pounds_Xg) \mu_g.
\end{align*}
Now, let $\psi_t$ be the flow associated to the vector field $X$, then, we have that
\begin{align*}
X(Q_g)=\frac{d}{dt}\psi^{*}_t(Q_{g})|_{t=0}=\frac{d}{dt}Q_{\psi^{*}_tg}|_{t=0}=D_{\psi^{*}_tg}Q\cdot\frac{d}{dt}\psi^{*}_tg|_{t=0}=D_{g}Q\cdot\pounds_Xg=L_g(\pounds_Xg),
\end{align*}
thus, we have that
\begin{align*}
\int_U\langle \mathrm{div}_g(L^{*}_gf),X \rangle \mu_g = -\frac{1}{2}\int_U \langle fdQ_g, X \rangle \mu_g, 
\end{align*}
which implies that $\mathrm{div}_g(L^{*}_gf)+\frac{1}{2}fdQ_g=0$ $\forall$ $f\in C^\infty_0(U)$. In particular, if we consider $f|_{V}\equiv 1$, then we get that, in a neighbourhood of $p$, it holds that
\begin{align*}
\mathrm{div}_g(L^{*}_g1)+\frac{1}{2}dQ_g=\mathrm{div}_g\big( L^{*}_g1 + \frac{1}{2}Q_gg \big)  =0.
\end{align*}
In particular, this means that if we define the tensor field $J_g\doteq -\frac{1}{2}L^{*}_g1$ then,  $B_J\doteq  J_g - \frac{1}{4}Q_gg$ is a conserved $(0,2)$-symmetric tensor field. The explicit expression for such tensor field is given in terms of the $Q$-curvature, the Bach tensor and the Schoutten tensor, and, as a map on the metric, is a fourth order operator. Explicitly, it can be written as follows \cite{chinos}.
\begin{align*}
\begin{split}
&J_g=\frac{1}{n}Q_gg-\frac{1}{n-2}B_g-\frac{n-4}{4(n-1)(n-2)}T_g,\\
&{B_g}_{jk}\doteq \nabla^i{C_g}_{ijk} + {W_g}_{ijkl}S_g^{il},\\
&{C_g}_{ijk}\doteq \nabla_i{S_g}_{jk} - \nabla_j{S_g}_{ik},\\
&S_g\doteq \frac{1}{n-2}(\mathrm{Ric}_g - \frac{1}{2(n-1)}R_gg),\\
&W_g\doteq \mathrm{Riem}_g - \frac{1}{n-2}\overset{\circ}{\mathrm{Ric}}\owedge g - \frac{R_g}{2n(n-1)}g\owedge g,\\
&h\owedge k(X,Y,Z,V)\doteq h(X,Z)k(Y,W) + h(Y,V)k(X,Z) - h(X,V)k(Y,Z) - h(Y,Z)k(X,V),\\
&T_g\doteq (n-2)(\nabla^2\mathrm{tr}_gS_g - \frac{g}{n}\Delta_g\mathrm{tr}_gS_g) + 4(n-1)(S\times S - \frac{1}{n}|S_g|^2_g g) - n\mathrm{tr}_g\mathrm{S}_g \overset{\circ}{S}_g,\\
&(S\times S)_{jk}\doteq S^i_jS_{ik},
\end{split}
\end{align*}
where in the above we have defined several useful tensor and operations, namely, the Bach tensor $B_g$; the Cotton tensor $C_g$; the Schouten tensor $S_g$; the Weyl curvature tensor $W_g$, which uses the $\owedge$-operator, which for two $(0,2)$-tensor fields $h$ and $k$ produces a $(0,4)$-tensor field. Since the Bach tensor is known to be traceless, and from the above expressions we have that $\mbox{tr}_gT_g=0$, then $\mathrm{tr}_g J_g=Q_g$. Taking into consideration all this notation, we are now in a position to present the following result, whose proof runs in complete analogy to the previous ones.

\begin{coro}
Let $(M^n,g_0)$ is an $H_{s+3,\delta_0}$-asymptotically euclidean manifold without boundary, with $n\geq 3$, $\delta>-\frac{n}{2}$, $\delta_0\geq-2$ and $s>\frac{n}{2}$. Then, there is a constant $C$, such that for any $H_{s+5,\delta}$-AE metric $g$, with $\delta>-\min\{\frac{n}{2}+2,4\}$, the following inequality holds
\begin{align}
||Q_g||_{L^2}\leq C ||\overset{\circ}{J_g}||_{L^2}
\end{align}
\end{coro}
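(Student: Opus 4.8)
The plan is to deduce this inequality by applying Theorem \ref{B-AS} to the conserved tensor $B_J\doteq J_g-\frac14 Q_gg$ constructed above, in exact analogy with Corollary \ref{R-AS} and its hypersurface counterpart. The two algebraic identities that drive the argument are, first, that since $\mathrm{tr}_gJ_g=Q_g$ and $\mathrm{tr}_gg=n$ one has
\begin{align*}
\mathrm{tr}_gB_J=Q_g-\frac{n}{4}Q_g=\frac{4-n}{4}Q_g,
\end{align*}
which is a nonzero multiple of $Q_g$ as long as $n\neq 4$; and second, that a direct computation gives
\begin{align*}
\overset{\circ}{B_J}=B_J-\frac1n(\mathrm{tr}_gB_J)g=J_g-\frac1n Q_gg=\overset{\circ}{J_g},
\end{align*}
so that the traceless part of the tensor fed into the theorem is precisely $\overset{\circ}{J_g}$. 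Granting the regularity discussed below, Theorem \ref{B-AS} applied with $B=B_J$ then yields
\begin{align*}
\frac{|4-n|}{4}||Q_g||_{L^2}=||\mathrm{tr}_gB_J||_{L^2}\leq n\Big(\frac{n-1}{n}+C||\mathrm{Ric}_g||_{C^0_2}\Big)^{\frac12}||\overset{\circ}{J_g}||_{L^2},
\end{align*}
and dividing through by $\frac{|4-n|}{4}$ produces the claimed inequality with a relabelled constant $C$.

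The step I expect to be the main obstacle is the weighted-Sobolev bookkeeping needed to verify that $B_J$ genuinely satisfies the hypotheses imposed on $B$ in Theorem \ref{B-AS}, namely $\mathrm{div}_gB_J=0$ together with $B_J\in H_{s+1,\rho}$ for some $\rho\geq 0$ with $\rho>-\frac n2+2$. The divergence-free property was already established when $B_J$ was introduced. For the regularity, one notes that $Q_g$, the Bach tensor $B_g$, the tensor $T_g$, and hence $J_g$, are all fourth-order differential expressions in $g$; writing $g=e+(g-e)$ with $g-e\in H_{s+5,\delta}$ and using repeatedly that $D\colon H_{k,\sigma}\to H_{k-1,\sigma+1}$, the fourth derivatives of $g$ land in $H_{s+1,\delta+4}$. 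Setting $\rho\doteq\delta+4$, the hypothesis $\delta>-\min\{\frac n2+2,4\}$ is exactly what is needed to force simultaneously $\rho>0$ and $\rho>-\frac n2+2$: the bound $\delta>-\frac n2-2$ is the binding one when $n=3$, while $\delta>-4$ is binding for $n\geq 4$. Hence $B_J\in H_{s+1,\rho}$ with $\rho$ in the admissible range, while the metric $g$ meets the remaining hypotheses of Theorem \ref{B-AS} in view of its assumed regularity and asymptotics.

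The delicate part of this bookkeeping is that the terms in the explicit formula for $J_g$ must be tracked individually: the top-order pieces ($\Delta_gR_g$ inside $Q_g$, $\nabla^i{C_g}_{ijk}$ inside $B_g$, and $\nabla^2\mathrm{tr}_gS_g$ inside $T_g$) are linear in the fourth derivatives of $g$ and land in $H_{s+1,\delta+4}$ by pure derivative counting, whereas the quadratic curvature terms (${W_g}_{ijkl}S_g^{il}$, $S\times S$, $|S_g|^2_gg$, $R_g^2$ and $|\mathrm{Ric}_g|^2_g$) must be estimated through the multiplication Lemma \ref{sobolevmultiplication}, whose weight rule $\sigma<\sigma_1+\sigma_2+\frac n2$ shows these products decay at least as fast as the linear pieces under our hypotheses. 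Finally, I would flag the genuine degeneracy at $n=4$: there $\mathrm{tr}_gB_J\equiv 0$, so $B_J$ carries no information about $Q_g$ and the argument above is vacuous; in that critical dimension the inequality either requires a separate construction of a divergence-free tensor whose trace is a nonzero multiple of $Q_g$, or the statement should be read as holding only for $n\neq 4$.
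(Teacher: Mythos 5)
Your proposal is, in substance, the paper's own proof: the paper likewise feeds the conserved tensor $B_J=J_g-\frac{1}{4}Q_gg$ into Theorem \ref{B-AS}, relies (implicitly) on the identities $\mathrm{tr}_gB_J=\frac{4-n}{4}Q_g$ and $\overset{\circ}{B_J}=\overset{\circ}{J_g}$, and devotes essentially all of its text to the same weighted-Sobolev bookkeeping you outline, concluding $Q_g,J_g\in H_{s+1,\delta+4}$ and taking $\rho=\delta+4$, with the AE condition $\delta>-\frac{n}{2}$ powering Lemma \ref{sobolevmultiplication} for the quadratic curvature terms.

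The one point where you go beyond the paper is the caveat at $n=4$, and you are right to insist on it: the paper asserts the corollary for all $n\geq 3$, yet its argument (and yours) bounds $\|\mathrm{tr}_gB_J\|_{L^2}=\frac{|4-n|}{4}\|Q_g\|_{L^2}$, which vanishes identically in dimension four, so the proof yields no information there. In fact, no alternative divergence-free tensor can rescue the statement at $n=4$, because the inequality itself fails in that dimension: the coefficient of $T_g$ in $J_g$ vanishes, so $\overset{\circ}{J_g}=-\frac{1}{2}B_g$, and the Bach tensor of any conformally flat four-dimensional metric is zero (its Weyl and Cotton tensors vanish). Taking $g=e^{2w}e$ on $\mathbb{R}^4$ with $w\not\equiv 0$ smooth and compactly supported produces a metric satisfying every hypothesis of the corollary, with $\overset{\circ}{J_g}\equiv 0$ but, by the conformal transformation law of $Q$-curvature in dimension four, $Q_g=e^{-4w}\Delta_e^2w\not\equiv 0$; hence no constant $C$ can exist. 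So the corollary must be read with $n\neq 4$, the constant degenerating like $\frac{4}{|n-4|}$ exactly as in the compact case of \cite{chinos}. This is a gap in the paper's statement, not in your argument.
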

\begin{proof}
We basically only need to show that $J_g$ and $Q_g$ satisfy the hypotheses of  Lemma \ref{B-AS}. Since $g$ is $H_{s+5,\delta}$-AE with $\delta>-\frac{n}{2}$, then from the continuous multiplication property and the fact that  $\mathrm{Riem}_g, \mathrm{Ric}_g\in H_{s+3,\delta+2}$, we have that $W_g,S_g\in H_{s+3,\delta+2}$. Thus, since contractions define continuous operations on tensor fields, the multiplication property gives us that the contracted tensor ${W_g}_{\cdot}S\doteq {W_g}_{ijkl}S^{il}$ is in $H_{s+3,\delta+4}$. Also, we get that $C_g\in H_{s+2,\delta+3}$, which, together with ${W_g}_{\cdot}S\in H_{s+3,\delta+4}$, gives us $B_g\in H_{s+1,\delta+4}$. Similarly, we have that $\Delta_gR_g\in H_{s+1,\delta+4}$ and $R^2_g,|\mathrm{Ric}_g|^2_g\in H_{s+3,\delta+4}$, which gives us that $Q_g\in H_{s+1,\delta+4}$. Finally, the fact that $S\in H_{s+3,\delta+2}$ and the multiplication property give us that $T\in H_{s+1,\delta+4}$. All this implies that $Q_g,J_g\in H_{s+1,\delta+4}$, where $\rho=\delta+4\geq 0$ and $\rho>-\frac{n}{2}+2$, since $\delta>-\min\{\frac{n}{2}+2,4\}$. Thus, we are under the hypotheses of Lemma \ref{B-AS}, and the result follows.
\end{proof}
Finally, it is worth noticing that Lemma \ref{B-AS} has been stated in a quite general form, that, as shown above, can be adapted to analyse different versions of almost-Schur type inequalities for different interesting geometric tensors related to conserved quantities. Obviously, we cannot exhaust all the examples here, but, surely, there are other rather obvious candidates for such examples, such us Lovelock curvatures, which have been in the center of plenty of resent research in geometric analysis.

\subsection*{Static potentials}

Recall that a Riemannian metric $g$ on a manifold $M$ is called static if the linearised scalar curvature map has nontrivial cokernel. This is equivalent to stating that the equation
\begin{align}
-g(\Delta_gf)+\nabla^2f-fRic_g=0
\end{align}
admits some nontrivial solution $f$. Such solution is referred to as a \textit{static potential}. It has been shown in \cite{Corvino} that any $C^3$-static metric must have constant scalar curvature, which implies that any $C^3$-asymptotically-flat static metrics must have zero scalar curvature. This means that a static potential of an asymptotically flat static metric must satisfy the system\footnote{Notice that this means that no static potential can exist in $H_{s,\delta}$ for $\delta>-\frac{n}{2}$, since $\Delta_g$ is injective in this case.}
\begin{align}\label{staticeq}
\begin{split}
\nabla^2f&=fRic_g,\\
\Delta_gf&=0.
\end{split}
\end{align}

Positive static potentials are an interesting object of study, since they are intimately related with vacuum space-time solutions of the Einstein equations, as was shown in \cite{Corvino}. In fact, there it is shown that a Riemannian manifold $(M^n,g)$ admits a static potential iff the warped product metric $\tilde{g}=-f^2dt\otimes dt + g$ is Einstein. Thus, in the case of $(M,g)$ being an AE manifold, since $R(g)=0$, then $\tilde{g}$ must actually be Ricci-flat. Thus, we get a correspondence between \textit{static} vacuum solutions of the space-time Einstein equations with static Riemannian manifolds. Furthermore, in \cite{Corvino} it is also shown that the zero level set of a static potential (in case it exists), is given by a regular totally geodesic hypersurface in $M$. Notice that in case $f^{-1}(0)\neq \emptyset$, then, the space-time metric $\tilde{g}$ will actually degenerate on such set, possibly signalling the existence of some \textit{pathology} of the space-time structure. In fact, since the scalar curvature of a static Riemannian metric is zero, then it is trivially a solution of the \textit{time-symmetric} vacuum Einstein constraint equations, which under evolution generate the static space-time. In this scenario, the hypersurface $f^{-1}(0)$ defines an \textit{apparent horizon} in the initial data. Such structures are related with the formation of black holes under the space-time evolution. For some details concerning these ideas, see, for instance, \cite{maxwell}-\cite{Corvino}-\cite{chrusciel-mazzeo} and references therein.  

Taking into account the above ideas, it is not surprissing that static potentials in the context of AE manifolds have atractted quite a lot of attention, and some strong rigidity has been observed for instance in \cite{Miao} for the $3$-dimensional case. In this section, we intend to show how some higher-dimensional analogues of their results follow naturally from the Pohozaev-Schoen identity (\ref{PS}), plus some mild hypotheses on $f$. Before going to the main statements, we make explicit the fact that throughout this section, when we consider static AE manifolds with boundary, we will consider that $\partial M=f^{-1}(0)$, this being motivated by the above discussion. That this, we have a model in mind where the boundary components would signal the existence of apparent horizons in time-symmetric vacuum initial data sets for the Einstein equations, which, under evolution, should evolve into black hole static space-times. The following lemma, which can be derived using some results that can be found \cite{Miao}, is presented within our current notations and conventions. We include the proof for sake of completeness.

\begin{lemma}
Suppose that $f$ is a static potential of an $H_{s+3,\delta}$-asymptotically euclidean metric $g$, with $s>\frac{n}{2}$, $\delta>-1$, and $n\geq 3$. Then $f\in C^2$ has bounded gradient $\nabla f$ and $\nabla^2f\in L^2$.
\end{lemma}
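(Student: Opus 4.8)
The plan is to prove the three assertions --- $C^{2}$-regularity, boundedness of $\nabla f$, and square-integrability of $\nabla^{2}f$ --- by exploiting the \emph{full} static system rather than harmonicity alone, the decisive input being the pointwise identity $\nabla^{2}f=f\,\mathrm{Ric}_g$ together with the fast decay of $\mathrm{Ric}_g$ forced by the AE hypothesis. Throughout, since $g$ is AE, the norms $|\cdot|_g,|\cdot|_e$ and the measures $\mu_g,\mu_e$ are uniformly equivalent, so every integrability statement may be read against the euclidean background.

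First I would settle regularity. By Lemma \ref{sobolevembedding}, $g-e\in H_{s+3,\delta}$ with $s>\frac n2$ gives $g\in C^{3}_{loc}$, so the coefficients of $\Delta_g$ are at least $C^{2}$, in particular H\"older continuous. Taking the $g$-trace of the first static equation and using $R_g=0$ (valid for AE static metrics, as recalled above) yields $\Delta_g f=fR_g=0$, and standard interior Schauder regularity for this homogeneous elliptic equation bootstraps the static potential to $f\in C^{2,\alpha}_{loc}\subset C^{2}$. Next, for the gradient bound, I would use that $\mathrm{Ric}_g\in H_{s+1,\delta+2}\subset C^{0}_{\beta}$ for every $\beta<\delta+2+\frac n2$; since $\delta>-1$ and $n\ge 3$ this range contains some $\mu\in(2,\delta+2+\frac n2)$, whence $|\mathrm{Ric}_g|(x)\lesssim(1+d^{2})^{-\mu/2}$ with $\mu>2$. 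On each end I integrate $\nabla^{2}f=f\,\mathrm{Ric}_g$ along euclidean rays issuing from a fixed sphere $\{d=R_0\}$: writing $\phi(r)=\sup_{d=r}|\nabla f|$ and $N(r)=\sup_{d=r}|f|$, Kato's inequality and Cauchy--Schwarz give the coupled inequalities $\phi(r)\le C_0+C\int_{R_0}^{r}N(\rho)\rho^{-\mu}\,d\rho$ and $N(r)\le N_0+\int_{R_0}^{r}\phi(\rho)\,d\rho$, with $C_0,N_0$ finite because $f\in C^{2}$ and $\{d=R_0\}$ is compact. Substituting the second into the first and using $N(\rho)\le N_0+\rho\sup_{[R_0,r]}\phi$ yields $\sup_{[R_0,r]}\phi\le A+C\big(\int_{R_0}^{\infty}\rho^{1-\mu}\,d\rho\big)\sup_{[R_0,r]}\phi$; as $\mu>2$ the tail integral is finite and tends to $0$ as $R_0\to\infty$, so choosing $R_0$ large enough that its coefficient is $\le\frac12$ absorbs the last term and gives a uniform bound for $\phi$. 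Hence $\nabla f$ is bounded on the ends, and therefore everywhere, and feeding this back gives the linear growth $|f|=O(d)$.

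Finally, for $\nabla^{2}f\in L^{2}$, I would combine $|f|\lesssim(1+d^{2})^{1/2}$ with the static identity to obtain $|\nabla^{2}f|_g\lesssim(1+d^{2})^{1/2}|\mathrm{Ric}_g|_g$. Since $\delta>-1$ forces $\delta+2>1$, we have $\mathrm{Ric}_g\in H_{0,\delta+2}=L^{2}_{\delta+2}\subset L^{2}_{1}$, so $\int_M(1+d^{2})|\mathrm{Ric}_g|^{2}_g\,\mu_g<\infty$ and consequently $\nabla^{2}f\in L^{2}$.

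The main obstacle is the second step. Harmonicity alone does not control $\nabla f$, since a generic solution of $\Delta_g f=0$ may grow polynomially; it is essential to use the full Hessian identity $\nabla^{2}f=f\,\mathrm{Ric}_g$. The gradient bound then survives precisely because $\mathrm{Ric}_g$ decays strictly faster than $d^{-2}$, which is exactly what makes the Gronwall-type tail integral converge for all $n\ge 3$ once $\delta>-1$; tracking where this strict inequality $\mu>2$ is used is the delicate point of the argument.
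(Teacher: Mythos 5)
Your proof is correct, and it reaches the three conclusions by a genuinely different mechanism than the paper's, although both rest on the same two inputs: the Hessian identity $\nabla^2 f = f\,\mathrm{Ric}_g$ and the super-quadratic decay $|\mathrm{Ric}_g|\lesssim d^{-\mu}$, $\mu>2$, supplied by the weighted Sobolev embedding. The paper restricts $f$ to $g$-geodesics, where the static equation becomes the second-order ODE $f''(t)=\mathrm{Ric}_g(\gamma',\gamma')f(t)$, and runs a comparison against the Euler equation $\omega''=\epsilon t^{-2}\omega$ with $\omega(t)=At^{\alpha}$, $\alpha=\tfrac12(1+\sqrt{1+4\epsilon})$; this first yields $|f|\lesssim t^{\alpha}$ with $\alpha$ slightly above $1$, which is then fed back into the ODE to obtain a bounded radial derivative and the linear growth $|f|\lesssim |x|$. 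It then proves $\nabla^2 f\in L^2$ (exactly where you do, and using $\delta>-1$ in the same way), and only afterwards establishes the full gradient bound, via a separate differential inequality for $\phi=|\nabla f|_g^2$, namely $|\phi'|\lesssim|\nabla^2 f|_g\,\phi^{1/2}\lesssim t^{-(1+\delta+n/2)}\phi^{1/2}$, integrated along geodesics. You instead close a first-order coupled Gronwall system for $\sup_{d=r}|f|$ and $\sup_{d=r}|\nabla f|$ by absorption, exploiting that $\int_{R_0}^{\infty}\rho^{1-\mu}\,d\rho\to 0$ as $R_0\to\infty$ (and, correctly, that $R_0$ can be fixed independently of $f$ before the constants $C_0,N_0$ enter); this delivers the gradient bound and the linear growth of $f$ simultaneously, so the paper's separate $\phi$-argument becomes unnecessary, and the $L^2$ Hessian bound then follows as in the paper. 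Your route is more economical --- one absorption instead of a two-step bootstrap plus a second integration --- while the paper's yields finer intermediate information (an explicit growth exponent and control of $f'$ along geodesics before any integrability is known). The remaining divergence is cosmetic: for the $C^2$ regularity you use $R_g=0$, hence $\Delta_g f=0$, plus Schauder theory, whereas the paper cites Corvino's Proposition 2.5; both are legitimate since $g\in C^3$ and AE static metrics are scalar-flat.
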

\begin{proof}
First of all, concerning the regularity of the solution, since $H_{s+3,\delta}$ embeds in $C^3$, appealing to Proposition 2.5 in \cite{Corvino}, we get that the static potential is $C^2$. Now, consider a point $x\in M$ which lies in one of the ends $E_i$. Let $B_{r_0}(p)$ be the $g$-geodesic ball of radius $r_0$ around the origin $p$, with $r_0$ chosen large enough such that its boundary $\partial B_{r_0}$ lies in the ends of $M$, and, furthermore, suppose that $x$ has been chosen so that it lies outside this geodesic ball. Let $\gamma(t)$ be a $g$-minimizing geodesic joining $p$ and $x$, parametrized by arc-length, so that $\gamma(r_0)\in\partial B_{r_0}$, and suppose that $\gamma(T)=x$. Finally, consider $f(t)\doteq f(\gamma(t))$, $r_0\leq t\leq T$. Then, since $f$ is a static potential of $g$, we have
\begin{align}\label{miao1}
f''(t)=\mathrm{Ric}_g(\gamma',\gamma')f(t) \;\;\ \forall \;\; r_0\leq t\leq T
\end{align}
Since $g-e\in H_{s+2,\delta}$, then $\mathrm{Ric}_g\in H_{s,\delta+2}\subset L^2_{\delta+2}\cap C^0$, which implies that $|\mathrm{Ric}_g|_e=o(d_e(x)^{-(\delta+2+\frac{n}{2})})$ near infinity, in each end. The asymptotic condition on $g$ actually gives us that $|\mathrm{Ric}_g|_e=o(d_g(x)^{-(\delta+2+\frac{n}{2})})$. This implies that, given an arbitrary $\epsilon>0$, if $t$ is sufficiently large, then 
\begin{align}\label{miao2}
|\mathrm{Ric}_g(\gamma'_t,\gamma'_t)|(\gamma_t)\leq \epsilon d_g(\gamma_t)^{-(\delta+\frac{n}{2}+2)}\leq \epsilon t^{-2}.
\end{align}

Now, define $\alpha\doteq \frac{1}{2}(1+\sqrt{1+4\epsilon})$; $a\doteq \sup_{\partial B_{r_0}}(|f|+|Df|_e)$ and $\omega(t)\doteq At^{\alpha}$, with $r_0\leq t\leq T$, where $A$ is a constant chosen such that $Ar_0^{\alpha}>a$ and $\alpha Ar_0^{\alpha-1}>a$ and $\epsilon$ is some positive constant. Then, the function $\omega(t)$ satisfies the following properties
\begin{align}\label{miao3}
\omega''(t)=\epsilon t^{-2}\omega(t), \;\; |f(r_0)|\leq \omega(r_0), \;\; |f'(r_0)|\leq \omega'(r_0).
\end{align}

Suppose that $|f(t)|>\omega(t)$ for some $r_0\leq t\leq T$. Then, define $t_1\doteq \inf \{t\in [r_0,T] \;\; \backslash \;\;  |f(t)|>\omega(t)  \}$. We know that $t_1$ satisfies that $t_1>r_0$ and $f(t_1)=\omega(t_1)$. Using (\ref{miao1})-(\ref{miao2}), we have that 
\begin{align*}
|f''(t)|\leq \epsilon t^{-2}\omega(t)=\omega''(t) \;\; \forall \;\; r_0\leq t\leq t_1.
\end{align*}
Now, integrating he above inequality twice and using (\ref{miao3}), gives $|f(t)|\leq \omega$ for all $r_0\leq t\leq t_1$, which is a contradiction, thus we get that $|f(t)|\leq \omega(t)$ for all $t\in [r_0,T]$. Thus, since $\delta>-\frac{n}{2}$, we can choose an $\epsilon>0$ such that $\alpha<1+\frac{1}{2}(\delta+\frac{n}{2})$, which implies that
\begin{align*}
|f(t)|\leq At^{1+\frac{1}{2}(\delta+\frac{n}{2})}.
\end{align*}
The above inequality together with (\ref{miao1})-(\ref{miao2}), gives that
\begin{align*}
|f''(t)|\leq A\epsilon t^{-(\delta+2+\frac{n}{2})}t^{1+\frac{1}{2}(\delta+\frac{n}{2})}= A\epsilon t^{-(1+\frac{1}{2}(\delta+\frac{n}{2}))} \;\; \forall \;\; r_0\leq t \leq T.
\end{align*}
This implies that there is a constant $C_1$, independent of the point $x\in E_i\backslash B_{r_0}$ and of $t$, such that 
\begin{align}\label{miao4}
|f''(t)|\leq C_1 t^{-(1+\frac{1}{2}(\delta+\frac{n}{2}))} \;\; \forall \;\; r_0\leq t \leq T.
\end{align}
Integrating this inequality between $r_0$ and $t$, we can show that $|f'(t)|\leq C_2$ for some other constant $C_2$, which shows that for sufficiently large $|x|$ it holds that
\begin{align}
|f|(x)\leq C_3|x|.
\end{align}

Now, using the equation $\nabla^2f=f\mathrm{Ric}_g$, we have that $|\nabla^2f|_e(x)\lesssim |x||x|^{-(\delta+2+\frac{n}{2})}=|x|^{-(1+\delta+\frac{n}{2})}\in L^2(\mathbb{R}^n\backslash B_1(0))$ for $\delta>-1$, which proves that $\nabla^2f\in L^2$. In order to prove that $\nabla f$ is bounded, define $\phi\doteq|\nabla f|^2_g$, and notice that 
\begin{align*}
|\nabla\phi|^2_g(x)\leq 4|\nabla^2f|^2_g\phi 
\end{align*}
Doing as above, consider $\phi(t)\doteq \phi(\gamma(t))$, and notice that since $g$ is AE, then $g$ and $e$ define \textit{equivalent} metrics, that is, for any tangent vector $v_p$ to $M$, there are positive constants $a$ and $b$, independent of the point $p\in M$, such that $a|v_p|_e\leq |v_p|_g \leq b|v|_e$. Thus, the above inequality tells us that $|\nabla\phi|^2_e(x)\lesssim |\nabla^2f|^2_e\phi$. Then, we have that
\begin{align*}
|\phi'(t)|=|g(\nabla\phi_{\gamma(t)},\gamma'_t)|\leq |\nabla\phi|_g\leq 2|\nabla^2f|_g\phi^{\frac{1}{2}} \lesssim |\nabla^2f|_e\phi^{\frac{1}{2}} \lesssim t^{-(1+\delta+\frac{n}{2})}\phi^{\frac{1}{2}}(t),
\end{align*}
which, after integration, gives us that
\begin{align*}
\frac{1}{\delta+\frac{n}{2}}\left( t^{-(\delta+\frac{n}{2})} - r^{-(\delta+\frac{n}{2})}_0\right)\lesssim \phi^{\frac{1}{2}}(t) -\phi^{\frac{1}{2}}(r_0) \lesssim -\frac{1}{\delta+\frac{n}{2}}\left( t^{-(\delta+\frac{n}{2})} - r^{-(\delta+\frac{n}{2})}_0\right).
\end{align*}
Since $t> r_0$ and $\delta+\frac{n}{2}>0$, the above inequality implies that
\begin{align*}
\phi^{\frac{1}{2}}(r_0)-\frac{1}{\delta+\frac{n}{2}} r^{-(\delta+\frac{n}{2})}_0\lesssim \phi^{\frac{1}{2}}(t) \lesssim \phi^{\frac{1}{2}}(r_0)+ \frac{1}{\delta+\frac{n}{2}} r^{-(\delta+\frac{n}{2})}_0 \;\;\; \forall \;\;\; t> r_0.
\end{align*}
The above relation implies that $|\nabla f|_g(x)=\phi^{\frac{1}{2}}(x)$ is bounded.

\end{proof}

\begin{thm}
Suppose that $(M^n,g)$ is a $H_{s+3,\delta}$-AE manifold with $n\geq 3$, $s>\frac{n}{2}$ and $\delta>-1$, which admits a non-negative static potential $f$. Furthermore, suppose that $\mathrm{Ric}_g\in H_{s+1,\rho}$, for some $\rho>\frac{n}{2}-1$, and that $\partial M=f^{-1}(0)$ consists of $N$ closed connected components, labelled by $\{\Sigma_i \}_{i=1}^{N}$. Then, it follows that 
\begin{align}
\int_Mf|\mathrm{Ric}_g|^2_g\mu_g = \frac{1}{2}\sum_{i=i}^Nc_i\int_{\Sigma_i}R_{h_i}\mu_{\partial M},
\end{align}
where the constants $c_i=|\nabla f|_{\Sigma_i}$. In particular, if $\partial M=\emptyset$, then $(M^n,g)$ is isometric to $(\mathbb{R}^n,e)$, where $e$ is the euclidean metric.
\end{thm}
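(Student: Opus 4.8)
The plan is to apply the Pohozaev-Schoen identity (\ref{PS}) with the specific choices $B=\mathrm{Ric}_g$ and $X=\nabla f$. First I would record the consequences of the static system (\ref{staticeq}): since $g$ is AE static we have $R_g=0$, so $\mathrm{tr}_g\mathrm{Ric}_g=0$ and, by the contracted Bianchi identity, $\mathrm{div}_g\mathrm{Ric}_g=\tfrac{1}{2}dR_g=0$. Thus $\mathrm{Ric}_g$ is a divergence-free, trace-free symmetric tensor, and in particular $\overset{\circ}{\mathrm{Ric}_g}=\mathrm{Ric}_g$. I would then check the hypotheses of Theorem \ref{thm1}: by the preceding lemma $\nabla f$ is bounded and $\nabla X=\nabla^2 f\in L^2$, while $\mathrm{Ric}_g\in H_{s+1,\rho}$ with $\rho>0$ is assumed. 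The weight condition $\rho>\tfrac{n}{2}-1$ is exactly what forces $\nabla f\in L^2_{-(\rho+1)}$: a bounded field lies in $L^2_{-(\rho+1)}$ precisely when $2(\rho+1)>n$, so this is the sharp integrability threshold that makes the identity applicable.

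Next I would evaluate both sides of (\ref{PS}). Using $\pounds_{\nabla f}g=2\nabla^2 f$ together with the static equation $\nabla^2 f=f\,\mathrm{Ric}_g$, the integrand on the left becomes $\langle\mathrm{Ric}_g,\pounds_{\nabla f}g\rangle=2f|\mathrm{Ric}_g|^2_g$, so the left-hand side equals $2\int_M f|\mathrm{Ric}_g|^2_g\mu_g$. On the right, the bulk term $\tfrac{2}{n}\int_M\nabla f(\mathrm{tr}_g\mathrm{Ric}_g)\mu_g$ vanishes because $\mathrm{tr}_g\mathrm{Ric}_g=R_g\equiv 0$, leaving only the boundary contribution $2\int_{\partial M}\mathrm{Ric}_g(\nabla f,\nu)\mu_{\partial M}$. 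Cancelling the common factor yields the intermediate identity $\int_M f|\mathrm{Ric}_g|^2_g\mu_g=\int_{\partial M}\mathrm{Ric}_g(\nabla f,\nu)\mu_{\partial M}$.

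The step I expect to be the main obstacle is converting this boundary integral into the scalar curvatures $R_{h_i}$. Here I would exploit the geometry of the level set $\partial M=f^{-1}(0)$: by Corvino's results recalled above, each $\Sigma_i$ is a regular totally geodesic hypersurface, and differentiating $\phi=|\nabla f|^2_g$ gives $\nabla\phi=2\nabla^2 f(\nabla f,\cdot)=2f\,\mathrm{Ric}_g(\nabla f,\cdot)$, which vanishes on $f^{-1}(0)$; hence $|\nabla f|$ is constant equal to $c_i$ along each $\Sigma_i$. Since $f\geq 0$ with $f=0$ on $\partial M$, the gradient points into $M$, so $\nabla f=-c_i\,\nu$ along $\Sigma_i$, where $\nu$ is the outward unit normal dictated by the divergence theorem underlying (\ref{PS}). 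Feeding the totally geodesic condition (second fundamental form $A\equiv 0$) and $R_g=0$ into the contracted Gauss equation $R_{h_i}=R_g-2\,\mathrm{Ric}_g(\nu,\nu)+(\mathrm{tr}_gA)^2-|A|^2_g$ gives $\mathrm{Ric}_g(\nu,\nu)=-\tfrac{1}{2}R_{h_i}$, whence $\mathrm{Ric}_g(\nabla f,\nu)=-c_i\,\mathrm{Ric}_g(\nu,\nu)=\tfrac{c_i}{2}R_{h_i}$. Summing over $i$ produces the claimed identity. The delicate points are keeping the orientation of $\nu$ consistent between the boundary term and the Gauss computation, and ensuring enough boundary regularity (supplied by $f\in C^2$ and the smoothness of the level set) for the trace and Gauss relations to be valid.

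Finally, for the rigidity statement I would specialize to $\partial M=\emptyset$: the boundary term then disappears and $\int_M f|\mathrm{Ric}_g|^2_g\mu_g=0$. Since $f\geq 0$ and $f^{-1}(0)=\partial M=\emptyset$, we have $f>0$ throughout $M$, forcing $\mathrm{Ric}_g\equiv 0$. The regularity and weight hypotheses in play ($g\in H_{s+3,\delta}$ with $s>\tfrac{n}{2}$ and $\delta>-1>-\tfrac{n}{2}$) satisfy the requirements of Theorem \ref{Ricciflatrigidity}, so $(M^n,g)$ is isometric to $(\mathbb{R}^n,e)$, completing the argument.
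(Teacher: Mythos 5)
Your proposal is correct and follows essentially the same route as the paper: apply Theorem \ref{thm1} with $B=\mathrm{Ric}_g$ and $X=\nabla f$ (legitimated by the preceding lemma, $R_g=0$, and the contracted Bianchi identity), reduce the boundary term using the constancy of $|\nabla f|$ on each $\Sigma_i$, the orientation $\nabla f=-c_i\nu$, the totally geodesic property of $f^{-1}(0)$ and the traced Gauss equation, and then deduce the rigidity statement from Theorem \ref{Ricciflatrigidity}. Your explicit observation that $\rho>\frac{n}{2}-1$ is exactly the threshold making the bounded field $\nabla f$ lie in $L^2_{-(\rho+1)}$ is a nice clarification that the paper leaves implicit.
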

\begin{proof}
First, notice that under our hypotheses the above lemma shows that $X=\nabla f$ satisfies the general hypotheses of Theorem \ref{thm1} if we pick $\rho>\frac{n}{2}-1$. Also, since a static AE metric has zero scalar curvature, we get that $\mathrm{Ric}_g\in H_{s+1,\rho}$ is a conserved $(0,2)$-tensor field. Thus, our choice of $\rho>\frac{n}{2}-1$ shows that we are under the hypotheses of Theorem \ref{thm1}. Then, we can apply (\ref{PS}) choosing the vector field $X=\nabla f$ so as to get
\begin{align}\label{SP1}
\int_M\langle \mathrm{Ric}_g,\nabla^2f \rangle\mu_g=\int_{\partial M}\mathrm{Ric}(\nabla f,\nu)\mu_{\partial M}.
\end{align}
Also, from the static equation, we have that $\langle \mathrm{Ric}_g,\nabla^2f \rangle=f|\mathrm{Ric}_g|^2_g$. Furthermore, since $\partial M=f^{-1}(0)$, then, we get that $\nu=-\frac{\nabla f}{|\nabla f|}$. In addition, the static equation implies that on $\partial M$ it holds that $\nabla^2f(X,\nabla f)=0$ for any $X$ tangent to $\partial M$. Then, since $\nabla^2f(X,\nabla f)=\frac{1}{2}X(|\nabla f|^2_g)$, we get that $|\nabla f|$ is constant along each connected component of $\partial M$. Thus, we see that
\begin{align*}
\int_Mf|\mathrm{Ric}_g|^2_g\mu_g = -\sum_{i=i}^Nc_i\int_{\Sigma_i}\mathrm{Ric}(\nu,\nu)\mu_{\partial M},
\end{align*}
where the sum is carried along the $N$ connected components $\{\Sigma_i\}_{i=1}^N$ of $\partial M$, and the constants $c_i=|\nabla f|_{\Sigma_i}$. Now, denote by $h_i$ the induced Riemannian metric on $\Sigma_i$. Now, let $\{E_i,\nu\}_{i=1}^{n-1}$ denote an orthonormal frame of $M$ along $\Sigma_i$, where $\{ E_i \}_{i=1}^{n-1}$ gives an orthonormal frame on $\Sigma_i$. Then, we get that for any pair of tangent vectors $X,Y$ to $\Sigma_i$ it holds that
\begin{align*}
\mathrm{Ric}_g(X,Y)&=\sum_{i=1}^{n-1}g(R_g(X,E_i)E_i,Y) + g(R_g(X,\nu)\nu,Y),
\end{align*}
which, from the Gauss equation and the fact that all the components are totally geodesic, implies that
\begin{align*}
R_g&=\sum_{j=1}^{n-1}\mathrm{Ric}_g(E_j,E_j) + \mathrm{Ric}_g(\nu,\nu)=\sum_{j=1}^{n-1}\sum_{i=1}^{n-1}g(R_g(E_j,E_i)E_i,E_j)  + \sum_{j=1}^{n-1}g(R_g(E_j,\nu)\nu,E_j) \\
&+ \mathrm{Ric}_g(\nu,\nu),\\
&=\sum_{j=1}^{n-1}\sum_{i=1}^{n-1}h(R_{h_i}(E_j,E_i)E_i,E_j) + \sum_{j=1}^{n-1}g(R_g(\nu,E_j)E_j,\nu) + \mathrm{Ric}_g(\nu,\nu),\\
&=R_{h_i} + 2\mathrm{Ric}_g(\nu,\nu).
\end{align*}
Again, since $R_g=0$, we get $-\mathrm{Ric}_g(\nu,\nu)|_{\Sigma_i}=\frac{1}{2}R_{h_i}$. Thus, going back, this gives us that
\begin{align}\label{static1}
\int_Mf|\mathrm{Ric}_g|^2_g\mu_g = \frac{1}{2}\sum_{i=i}^Nc_i\int_{\Sigma_i}R_{h_i}\mu_{\partial M}.
\end{align}

From the above equation it is clear that if $\partial M=\emptyset$, then $\mathrm{Ric}_g=0$ on $M$. In such case, we have already shown this implies that $(M,g)$ has to be isometric to $(\mathbb{R}^n,e)$.
 
\end{proof}

\begin{remark}
Notice that the two conditions $g-e\in H_{s+3,\delta}$, with $\delta>-1$, and $\mathrm{Ric}_g\in H_{s+1,\rho}$, with $\rho>\frac{n}{2}-1$, in dimensions 3 and 4 are actually redundant, since, from $g-e\in H_{s+3,\delta}$, we get that $\mathrm{Ric}_g\in H_{s+1,\delta+2}$, with $\delta+2>1$. Thus, in dimension $n=3$, we get $\frac{n}{2}-1=\frac{1}{2}<\delta+2$. Similarly, for $n=4$, we get $\frac{n}{2}-1=1<\delta+2$. Nevertheless, for $n\geq 5$ the condition on the Ricci tensor becomes a necessary additional information. 

From the above discussion, we see that the identity (\ref{static1}) is presented for $n=3$ in \cite{Miao}, and that the same identity holds with the same hypotheses for $n=4$. Nevertheless, for $n\geq 5$ the identity extends naturally, although non-trivially. 
\end{remark}

\bigskip
Now, consider a connected static manifold $(M^n,g,f)$, with $n\geq 3$, where $g$ and $\mathrm{Ric}_g$ satisfy the hypotheses of the above Lemma, and suppose that $\partial M=\emptyset$. Notice that if $f$ changes sign, then $f^{-1}(0)\neq \emptyset$ actually represents the boundary of both $f^{-1}(0,\infty)$ and $f^{-1}(-\infty,0)$, which are two $n$-dimensional submanifolds of $M$. We can then add $f^{-1}(0)$ to any of these submanifolds, generating two manifolds with boundary, say $M_{+}$ and $M_{-}$ respectively. Clearly, $(M_{+},g,f)$ is a static manifold with boundary which satisfies all the hypotheses of the above theorem provided suitable decaying conditions for $g$. Furthermore, $(M_{-},g,-f)$ is also a static manifold since the static equation is linear. Thus, $(M_{-},g,-f)$ also satisfies all the hypotheses of the above theorem. Thus, we get the following corollary.

\begin{coro}
Let $(M^n,g,f)$ be a connected static manifold, with $n\geq 3$, where $g$ and $\mathrm{Ric}_g$ satisfy the hypotheses of the above theorem. Moreover, suppose that $f^{-1}(0)\neq \emptyset$ and $\partial M=\emptyset$. Then, we have that
\begin{align}\label{balckholetopology.1}
\sum_{i=i}^Nc_i\int_{\Gamma_i}R_{h_i}\mu_{\partial M} \geq 0,
\end{align}
where $\{\Gamma_i\}_{i=1}^N$ denote the connected components of $f^{-1}(0)$. Furthermore, the equality holds if and only if $(M,g)$ is isometric to $(\mathbb{R}^n,e)$.
\end{coro}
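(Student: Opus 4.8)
The plan is to split $M$ along the zero set of $f$ into the two regions on which $f$ has a definite sign, and to apply the static-potential theorem above to each piece separately. Recall from \cite{Corvino} (as quoted above) that the zero set of a static potential is a regular, totally geodesic hypersurface; in particular $\nabla f\neq 0$ along $f^{-1}(0)$, so $f$ changes sign across every connected component $\Gamma_i$, and the weights $c_i=|\nabla f|_{\Gamma_i}>0$. Since the $\Gamma_i$ are closed (hence compact), $f^{-1}(0)$ lies in the compact core of $M$ and each end carries a definite sign of $f$. Thus I would set $M_{+}\doteq\{f\geq 0\}$ and $M_{-}\doteq\{f\leq 0\}$; these are asymptotically euclidean manifolds with common compact boundary $\partial M_{+}=\partial M_{-}=f^{-1}(0)=\bigcup_{i=1}^N\Gamma_i$ (the full zero set appears as boundary of each piece precisely because $f$ changes sign across each $\Gamma_i$), and they inherit from $M$ all the weighted-space hypotheses ($g-e\in H_{s+3,\delta}$ with $\delta>-1$, $\mathrm{Ric}_g\in H_{s+1,\rho}$ with $\rho>\frac{n}{2}-1$) required by the above theorem.

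Next I would apply the above theorem to $(M_{+},g,f)$, whose static potential $f\geq 0$ is non-negative by construction, obtaining
\[
\int_{M_{+}}f|\mathrm{Ric}_g|^2_g\,\mu_g=\frac{1}{2}\sum_{i=1}^N c_i\int_{\Gamma_i}R_{h_i}\,\mu_{\partial M}.
\]
Since the left-hand side is non-negative, inequality (\ref{balckholetopology.1}) follows at once. Applying the same theorem to $(M_{-},g,-f)$, whose potential $-f\geq 0$ is likewise non-negative, yields
\[
\int_{M_{-}}(-f)|\mathrm{Ric}_g|^2_g\,\mu_g=\frac{1}{2}\sum_{i=1}^N c_i\int_{\Gamma_i}R_{h_i}\,\mu_{\partial M},
\]
whose right-hand side is identical, because $|\nabla(-f)|=|\nabla f|$ so the weights agree, and $R_{h_i}$ is intrinsic to $\Gamma_i$ and independent of which side one regards as interior.

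For the rigidity statement I would argue as follows. If equality holds in (\ref{balckholetopology.1}), then both displayed identities have vanishing right-hand side, so $\int_{M_{+}}f|\mathrm{Ric}_g|^2_g\mu_g=\int_{M_{-}}(-f)|\mathrm{Ric}_g|^2_g\mu_g=0$. As $f>0$ on the interior of $M_{+}$ and $-f>0$ on the interior of $M_{-}$, and the integrands are non-negative and continuous, this forces $\mathrm{Ric}_g\equiv 0$ on each piece, hence on all of $M$. Since $g-e\in H_{s+3,\delta}$ with $s+3>\frac{n}{2}+2$ and $\delta>-1>-\frac{n}{2}$ for $n\geq 3$, Theorem \ref{Ricciflatrigidity} then gives that $(M,g)$ is isometric to $(\mathbb{R}^n,e)$. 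The converse is immediate: if $(M,g)$ is flat, every component $\Gamma_i$ of $f^{-1}(0)$ is totally geodesic in a flat space, hence flat by the Gauss equation, so $R_{h_i}\equiv 0$ and the sum in (\ref{balckholetopology.1}) vanishes.

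The main obstacle I expect is the first step: rigorously justifying the decomposition of $M$ and that $\partial M_{\pm}=f^{-1}(0)$ \emph{in its entirety}. This rests on the regularity of the level set (so that $M_{\pm}$ are genuine manifolds with boundary), on the sign change across each $\Gamma_i$ (so that the whole zero set bounds each piece), and on the compactness of $f^{-1}(0)$ (so that every end retains its AE weighted structure and a definite sign of $f$, letting $M_{\pm}$ inherit exactly the hypotheses of the above theorem). Once this geometric bookkeeping is in place, all the analytic content is already supplied by the above theorem and by Theorem \ref{Ricciflatrigidity}.
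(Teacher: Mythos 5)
Your proposal is correct and follows essentially the same route as the paper: split $M$ along $f^{-1}(0)$ into $M_{+}$ and $M_{-}$, apply the static-potential theorem to $(M_{+},g,f)$ and $(M_{-},g,-f)$ (both having non-negative potentials), deduce the inequality from the non-negativity of the bulk integrals, and obtain rigidity in the equality case from $\mathrm{Ric}_g\equiv 0$ together with Theorem \ref{Ricciflatrigidity}. The only difference is that you make explicit the geometric bookkeeping (regularity of the level set, the sign change of $f$ across each $\Gamma_i$, positivity of $f$ on the interior of $M_{+}$) that the paper leaves implicit, which is a refinement rather than a different argument.
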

\begin{proof}
Consider the notation introduced above. Then, under our hypotheses, we have that both $(M_{+},g,f)$ and $(M_{-},g,-f)$ are static manifolds with boundary, for which the above theorem holds. Then, we get that
\begin{align*}
\sum_{i=i}^Nc_i\int_{\Gamma_i}R_{h_i}\mu_{\partial M}=2\int_{M_{+}}f|\mathrm{Ric}_g|^2_g\mu_g\geq 0,\\
\sum_{i=i}^Nc_i\int_{\Gamma_i}R_{h_i}\mu_{\partial M}=-2\int_{M_{-}}f|\mathrm{Ric}_g|^2_g\mu_g \geq 0,
\end{align*}
where, in both cases, the equality holds iff $\mathrm{Ric}_g=0$ on both $M_{+}$ and $M_{-}$. That is, the equality holds iff $\mathrm{Ric}_g=0$ on $M$. But then, from Theorem \ref{Ricciflatrigidity}, we get the rigidity statement in the equality case.
\end{proof}

From the above corollary, we get the following.

\begin{coro}
Under the same hypotheses of the above corollary, if $n=3$ and $f^{-1}(0)$ consists of one connected component, that is $N=1$, then $f^{-1}(0)$ is homeomorphic to $\mathbb{S}^2$. If $N\geq 2$, then there must be at least one topological sphere within the components of $f^{-1}(0)$.
\end{coro}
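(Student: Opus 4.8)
The plan is to combine the integral inequality of the previous corollary with the two–dimensional Gauss–Bonnet theorem. First I would record the geometry of the level set: since $n=3$, each connected component $\Gamma_i$ of $f^{-1}(0)$ is a \emph{closed} (compact, boundaryless) surface; by Corvino's result it is a regular level set, so $\nabla f\neq 0$ along it and hence $c_i=|\nabla f|_{\Gamma_i}>0$, and it carries the globally defined unit normal $\nu=-\nabla f/|\nabla f|$. Applying Gauss–Bonnet to $(\Gamma_i,h_i)$ and using that in two dimensions the scalar curvature equals twice the Gaussian curvature gives $\int_{\Gamma_i}R_{h_i}\,\mu_{\partial M}=4\pi\,\chi(\Gamma_i)$.

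Substituting this into the estimate of the previous corollary, the inequality $\sum_i c_i\int_{\Gamma_i}R_{h_i}\,\mu_{\partial M}\geq 0$ becomes
\begin{align*}
\sum_{i=1}^N c_i\,\chi(\Gamma_i)\geq 0 .
\end{align*}
The next step is to promote this to a \emph{strict} inequality. I would argue by contradiction: if the left-hand side vanished, then equality would hold in the previous corollary, forcing $(M,g)$ to be isometric to $(\mathbb{R}^3,e)$. In that case the only static potentials are affine functions, whose zero sets are affine hyperplanes; such a hyperplane is non-compact, contradicting the hypothesis that the $\Gamma_i$ are closed. Hence equality is impossible and $\sum_{i=1}^N c_i\,\chi(\Gamma_i)>0$.

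To extract the topological conclusion I would use that each $\Gamma_i$ is two-sided in $M$, since it admits the global normal $\nu$; as $M$ is orientable, two-sidedness forces $\Gamma_i$ to be an orientable surface, and an orientable closed surface has $\chi\leq 2$, with $\chi>0$ only for $\mathbb{S}^2$. Because all $c_i>0$, in the case $N=1$ the strict inequality yields $\chi(\Gamma_1)>0$, whence $\chi(\Gamma_1)=2$ and $\Gamma_1\cong\mathbb{S}^2$; in the case $N\geq 2$ at least one summand must satisfy $\chi(\Gamma_{i_0})>0$, forcing that component to be a topological sphere.

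The main obstacle I anticipate is the passage to a strict inequality, that is, ruling out the borderline configuration in which $\sum_i c_i\chi(\Gamma_i)=0$: this is precisely where the rigidity clause of the previous corollary and the non-compactness of level sets of affine functions on $\mathbb{R}^3$ must be combined. A secondary point requiring care is the orientability of the $\Gamma_i$, needed to exclude $\mathbb{RP}^2$ (which also has positive Euler characteristic); this is handled by the two-sidedness of $f^{-1}(0)$ together with the orientability of the ambient AE manifold.
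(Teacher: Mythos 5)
Your proof is correct and follows essentially the same route as the paper's own: substitute Gauss--Bonnet into the inequality of the preceding corollary and read off the signs of the Euler characteristics, using $c_i>0$ from the regularity of the level set. In fact you are more careful than the paper at precisely the two points its proof elides: you justify strict positivity (the paper merely asserts it) by invoking the rigidity clause and noting that static potentials on $(\mathbb{R}^3,e)$ are affine, hence have non-compact zero sets, contradicting compactness of the $\Gamma_i$; and you exclude $\mathbb{RP}^2$, which also has positive Euler characteristic, via two-sidedness of the level set together with orientability of $M$ --- the only caveat being that orientability of $M$ is an assumption the paper never states explicitly, so that step of your argument rests on a hypothesis you have added rather than one you can quote.
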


\begin{proof}
This is straightforward, since, from (\ref{balckholetopology.1}), we get that if $N=1$ the total scalar curvature of $\partial M$ must be positive. Thus, from the Gauss-Bonnet theorem, we know that the Euler characteristic of $\partial M$ must be positive, thus $\partial M$ is a topological $2$-sphere.

On the other hand, in the case $N\geq 2$, since the sum of the total scalar curvatures of the different components has to be positive, then we must have at least one sphere from the argument given above.
\end{proof}

\begin{remark}
It is important to stress that the above corollary is very closely related to well-known results related with the classification of allowable black hole topologies. Recall that the zero level set of a static potential models the intersection of the event horizon of a static black hole with a $t$-constant hypersurface in space-time. Thus, the characterization given above shows that such slices of the event horizon of an isolated static black hole have to be a topological 2-spheres. Once this connection between the two problems is clear, it also becomes clear that the above corollary is not a new result, but something that can be extracted from more general statements, such as Hawking's black hole uniqueness theorem \cite{Hawking}, or even more generally, a recent result by Eichmair, Galloway and Pollack \cite{EGP}. In fact, these results are much stronger than the above corollary, since the show that every component of $\partial M$ should be a topological $2$-sphere and they are not restricted to static space-times. Nevertheless, we consider that the value of the above corollary relies on its simplicity, since this result is a straightforward application of the generalized Pohozaev-Schoen identity, which, in turn, relies only on a conservation identity derived from a symmetry principle. In contrast, the sharper characterizations presented in \cite{Hawking} and \cite{EGP} rely on more delicate techniques. In particular, for instance, the result presented in \cite{EGP} relies on the positive resolution of the geometrization conjecture. 	
\end{remark}

\section*{Acknowledgements}

The first author would like to thank professor Paul Laurain for reading a previous version of this paper and making several relevant comments and suggestions.
 
The first author would also like to thank CAPES/PNPD for financial support.

\end{document}